\def\scaleint#1{\vcenter{\hbox{\scaleto[3ex]{\displaystyle\int}{#1}}}}
\newcommand\tenq[2][1]{%
\def\useanchorwidth{T}%
\ifnum#1>1%
\stackunder[0pt]{\tenq[\numexpr#1-1\relax]{#2}}{\!\scriptscriptstyle\thicksim}%
\else%
\stackunder[1pt]{#2}{\!\scriptstyle\thicksim}%
\fi%
}
\DeclareRobustCommand\widecheck[1]{{\mathpalette\@widecheck{#1}}}
\def\@widecheck#1#2{%
    \setbox\z@\hbox{\m@th$#1#2$}%
    \setbox\tw@\hbox{\m@th$#1%
       \widehat{%
          \vrule\@width\z@\@height\ht\z@
          \vrule\@height\z@\@width\wd\z@}$}%
    \dp\tw@-\ht\z@
    \@tempdima\ht\z@ \advance\@tempdima2\ht\tw@ \divide\@tempdima\thr@@
    \setbox\tw@\hbox{%
       \raise\@tempdima\hbox{\scalebox{1}[-1]{\lower\@tempdima\box
\tw@}}}%
    {\ooalign{\box\tw@ \cr \box\z@}}}
\def\given{\,|\,}
\def\Biggiven{\,\Big{|}\,}
\def\tr{\mathop{\text{tr}}\kern.2ex}
\def\tX{{\tilde X}}
\def\tY{{\tilde Y}}
\def\tN{{\tilde N}}
\def\tT{{\tilde T}}
\def\P{{\mathrm P}}
\def\E{{\mathrm E}}
\def\N{{\mathbbm N}}
\def\d{{\mathrm d}}
\newcommand{\zahl}[1]{\llbracket #1\rrbracket}
\newcommand\yestag{\addtocounter{equation}{1}\tag{\theequation}}
\newcolumntype{L}[1]{>{\raggedright\let\newline\\\arraybackslash\hspace{0pt}}m{#1}}
\newcolumntype{C}[1]{>{  \centering\let\newline\\\arraybackslash\hspace{0pt}}m{#1}}
\newcolumntype{R}[1]{>{ \raggedleft\let\newline\\\arraybackslash\hspace{0pt}}m{#1}}
\newcolumntype{d}[1]{D{.}{.}{#1}}
\newcolumntype{H}{>{\setbox0=\hbox\bgroup}c<{\egroup}@{}}
\newcolumntype{Z}{>{\setbox0=\hbox\bgroup}c<{\egroup}@{\hspace*{-\tabcolsep}}}
\newcolumntype{b}{X}
\newcolumntype{s}{>{\hsize=.5\hsize}X}
\numberwithin{equation}{section}
\newtheorem{theorem}{Theorem}[section]
\newtheorem{lemma}{Lemma}[section]
\newtheorem{proposition}{Proposition}[section]
\providecommand{\customgenericname}{}
\newcommand{\newcustomtheorem}[2]{%
  \newenvironment{#1}[1]
  {%
   \renewcommand\customgenericname{#2}%
   \renewcommand\theinnercustomgeneric{##1}%
   \innercustomgeneric
  }
  {\endinnercustomgeneric}
}
\theoremstyle{definition}
\newtheorem{remark}{Remark}[section]
\newcommand{\mylabel}[2]{#2\def\@currentlabel{#2}\label{#1}}
\newcommand{\nb}[1]{{{#1}}}
\begin{document}

\setlength{\abovedisplayskip}{5pt}
\setlength{\belowdisplayskip}{5pt}
\setlength{\abovedisplayshortskip}{5pt}
\setlength{\belowdisplayshortskip}{5pt}
\hypersetup{colorlinks,breaklinks,urlcolor=blue,linkcolor=blue}

\title{\LARGE Limit theorems of Chatterjee's rank correlation}

\author{Zhexiao Lin\thanks{Department of Statistics, University of California, Berkeley, CA 94720, USA; e-mail: {\tt zhexiaolin@berkeley.edu}}~~~and~
Fang Han\thanks{Department of Statistics, University of Washington, Seattle, WA 98195, USA; e-mail: {\tt fanghan@uw.edu}}
}

\date{\today}

\maketitle

\vspace{-1em}

\begin{abstract}
Establishing the limiting distribution of Chatterjee's rank correlation for a general, possibly non-independent, pair of random variables has been eagerly awaited by many. This paper shows that (a) Chatterjee's rank correlation is asymptotically normal as long as one variable is not a measurable function of the other, (b) the corresponding asymptotic variance is uniformly bounded by 36, and (c) a consistent variance estimator exists. Similar results also hold for Azadkia-Chatterjee's graph-based correlation coefficient, a multivariate analogue of Chatterjee's original proposal. The proof is given by appealing to H\'ajek representation and Chatterjee's nearest-neighbor CLT.
\end{abstract}

{\bf Keywords}: dependence measure, rank-based statistics, graph-based statistics, H\'ajek representation, nearest-neighbor CLT.

\section{Introduction}\label{sec:intro}

Let $Y$ be a random variable in $\bR$ and $X$ be a random vector in $\bR^d$ that are defined on the same probability space and of joint and marginal distribution functions $F_{X,Y}$ and $F_X,F_Y$, respectively. Throughout the paper, we consider $F_{X,Y}$ to be {\it fixed} and {\it continuous}. 

To measure the dependence strength between $X$ and $Y$, \cite{MR3024030} introduced the following population quantity,
\begin{align}\label{eq:xi}
\xi=\xi(X,Y):=\;&  \frac{\scaleint{4.5ex}\,\Var\big\{\E\big[\ind\big(Y\geq y\big) \given X \big] \big\} \d F_{Y}(y)}{\scaleint{4.5ex}\,\Var\big\{\ind\big(Y\geq y\big)\big\}\d F_{Y}(y)},
\end{align}
with $\ind(\cdot)$ representing the indicator function. This quantity, termed the {\it Dette-Siburg-Stoimenov's dependence measure} in literature, enjoys desirable properties of being between 0 and 1 and being (a) 0 if and only if $Y$ is independent of $X$; and (b) 1 if and only if $Y$ is a measurable function of $X$.

Consider $(X_1,Y_1), \ldots, (X_n,Y_n)$ to be $n$ independent copies of $(X,Y)$.  For any $i\in\{1,\ldots,n\}$, let $R_i:=\sum_{j=1}^n\ind(Y_j\leq Y_i)$ denote the rank of $Y_i$, and 
% let $N_1(i)$ and $\overline{N}_1(i)$ index the nearest neighbor (NN) of $X_i$ among $\{X_j\}_{j=1}^n$ (under the Euclidean metric $\|\cdot\|$) and the right NN of $X_i$ among $\{X_j\}_{j=1}^n$ (when $d=1$, with $\overline{N}_1(i) :=i$ if $X_i=\max\{X_1,\ldots,X_n\}$), respectively. 
\nb{let $N_k(i)$ and $\overline{N}_k(i)$ index the $k$-th nearest neighbor (NN) of $X_i$ among $\{X_j\}_{j=1}^n$ (under the Euclidean metric $\|\cdot\|$) and the right $k$-th NN of $X_i$ among $\{X_j\}_{j=1}^n$ (when $d=1$, with $\overline{N}_k(i) :=i$ if $X_i$ is among the $k$ largest).} To estimate $\xi$ based only on $(X_i,Y_i)$'s, \cite{azadkia2019simple} and \cite{chatterjee2020new} introduced the following two correlation coefficients:
\begin{align}
\text{(Azadkia-Chatterjee)}~~~ \xi_n &:= \frac{6}{n^2-1} \sum_{i=1}^n \min\big\{R_i, R_{N_1(i)}\big\} - \frac{2n+1}{n-1},~~~\text{for $d\geq 1$}; \label{eq:xin}\\
\text{(Chatterjee)}~~~\overline{\xi}_n &:= 1 - \frac{3}{n^2-1} \sum_{i=1}^n \Big\lvert R_{\overline{N}_1(i)} - R_i \Big\rvert, ~~~\text{when }d=1. \label{eq:barxin}
\end{align}

\citet[Theorem 2.2]{azadkia2019simple} and \citet[Theorem 1.1]{chatterjee2020new} showed that, under some very mild conditions, both $\xi_n$ and $\overline\xi_n$ constitute strongly consistent estimators of $\xi$. However, deriving the limiting distributions of $\xi_n$ and $\overline\xi_n$ is also of interest to statisticians. Unfortunately, unless $X$ and $Y$ are independent --- implying that $N_1(i)$ and $\overline N_1(i)$'s are independent of $Y_1,\ldots,Y_n$ --- this is apparently still an open problem. 

The following two theorems answer this call, and are the main results of this paper.

\begin{theorem}[Asymptotic normality]\label{thm:main}
For any fixed and continuous $F_{X,Y}$ such that $Y$ is not a measurable function of $X$ almost surely, we have
  \begin{align}\label{eq:main1}
    \big(\xi_n - \E[\xi_n]\big)/\sqrt{\Var[\xi_n]} \longrightarrow N(0,1) ~~ {\rm in~distribution},
  \end{align}
  and
  \begin{align*}
    \big(\overline{\xi}_n - \E[\overline{\xi}_n]\big)/\sqrt{\Var[\overline{\xi}_n]} \longrightarrow N(0,1) ~~ {\rm in~distribution}~~({\rm if}~d=1).
  \end{align*}
\end{theorem}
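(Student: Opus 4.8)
The plan is to reduce each coefficient to a sum of \emph{local} functionals over the nearest-neighbor graph --- a sum in which every summand depends on only boundedly many data points --- and then to invoke Chatterjee's nearest-neighbor CLT. The single feature that blocks a direct application is that the ranks $R_i=\sum_{j=1}^n\ind(Y_j\le Y_i)=n\widehat F_n(Y_i)$ are \emph{global}: each depends on the whole sample, so neither $\min\{R_i,R_{N(i)}\}$ nor $|R_{\overline N(i)}-R_i|$ is a function of a bounded neighborhood of $(X_i,Y_i)$. Dissolving this global dependence is exactly the role played by the H\'ajek representation.

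Writing $U_i:=F_Y(Y_i)$ and $\min\{a,b\}=\tfrac12(a+b-|a-b|)$, I would first pass to the \emph{oracle} statistics obtained by replacing $\widehat F_n$ by $F_Y$, e.g.\ $\overline\xi_n^\ast:=1-\tfrac{3}{n}\sum_{i=1}^n|U_{\overline N(i)}-U_i|$ for Chatterjee's coefficient and the analogous $\xi_n^\ast$ for the Azadkia-Chatterjee coefficient; since each $U_i$ is uniform on $[0,1]$, these are genuine sums over the NN graph. The subtle point is that the replacement error is \emph{not} negligible: expanding $|\widehat F_n(Y_{\overline N(i)})-\widehat F_n(Y_i)|$ around $F_Y$ produces a first-order term
\begin{align*}
\frac{1}{n}\sum_{i=1}^n\sgn\big(U_{\overline N(i)}-U_i\big)\big[(\widehat F_n-F_Y)(Y_{\overline N(i)})-(\widehat F_n-F_Y)(Y_i)\big],
\end{align*}
which is of the \emph{same} order $n^{-1/2}$ as the standard deviation of the statistic itself. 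Because $\widehat F_n-F_Y$ is an average of $\ind(Y_j\le\,\cdot\,)-F_Y(\cdot)$, this is a (nearly degenerate) double sum whose H\'ajek projection onto $\sum_j(\text{function of }(X_j,Y_j))$ contributes a genuine extra i.i.d.\ piece at the $n^{-1/2}$ scale; the H\'ajek representation linearizes it and bounds the leftover remainder by $o_P(n^{-1/2})$, using the $\sqrt n$-rate of the empirical process together with the bounded degree of the NN graph.

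Having thus written each coefficient, up to $o_P(n^{-1/2})$, as a sum of bounded functionals attached to the vertices and edges of the NN graph, I would apply Chatterjee's nearest-neighbor CLT to the full (unconditional) sum. The geometry supplies the required local dependence: in $\R^d$ a point is the nearest neighbor of at most a dimensional constant $c_d$ of the others, and for $d=1$ the right-NN graph is a path, so each summand depends on only boundedly many of the i.i.d.\ pairs $(X_i,Y_i)$. As all summands are bounded, the third-moment and influence conditions of the CLT hold automatically, yielding $\sqrt n(\xi_n-\E[\xi_n])\to N(0,\sigma^2)$ (and likewise for $\overline\xi_n$ when $d=1$); the variance is most cleanly analyzed through $\Var[\,\cdot\,]=\E\,\Var[\,\cdot\mid X]+\Var\,\E[\,\cdot\mid X]$, which separates the contribution of the NN graph from that of the responses.

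The main obstacle is the \emph{lower bound} $\liminf_n n\,\Var[\xi_n]>0$, and this is precisely where the hypothesis that $Y$ is not a measurable function of $X$ is used: when $Y=f(X)$ one has $\xi=1$, the edge functionals degenerate, and the $n^{-1/2}$-scale Gaussian fluctuation collapses, so no nontrivial Gaussian limit need survive self-normalization. To rule this out I would bound $\Var[\xi_n]$ from below by the variance of a single explicit coordinate of the H\'ajek projection --- for example the conditional-mean term $\Var\{\E[\,\cdot\mid X]\}$, or the projection of one edge functional onto its own pair $(X_i,Y_i)$ --- and show that this quantity stays bounded away from $0$ by a constant measuring how far $F_{X,Y}$ is from deterministic dependence, while checking that it is not annihilated by the remaining components. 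Combining the localization, the nearest-neighbor CLT, and this variance floor delivers the self-normalized convergence; finiteness of $\sigma^2$ (indeed the uniform bound by $36$) follows from the boundedness of the edge functionals and controls the standardization from above.
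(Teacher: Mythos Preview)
Your plan matches the paper's architecture: a H\'ajek representation that replaces ranks by population CDF values plus an i.i.d.\ correction, Chatterjee's nearest-neighbor CLT applied to the resulting local sum, and a separate variance lower bound. The gap is that you have not identified the correction. The paper shows the extra i.i.d.\ piece is $\frac{1}{n}\sum_i h(Y_i)$ with $h(t)=\E\big[\P(Y\ge t\mid X)^2\big]$, a form that is \emph{not} the one arising under independence (where it collapses to a polynomial in $F_Y(t)$) and that is not recoverable from your first-order expansion without a further conditional-expectation calculation. Knowing $h$ explicitly is essential, because the remainder bound $n\Var[\xi_n-\xi_n^*]\to 0$ is where almost all the work lies: the paper proceeds via a law-of-total-variance split, an Efron--Stein argument for $\Var\big[\E(\cdot\mid\mX)\big]$, and a key algebraic identity showing that several conditional covariances sum \emph{exactly} to $\E\Var[g(Y_1)\mid X_1]$ with $g(t)=\Var[\P(Y\ge t\mid X)]$. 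Your suggested tools --- the $\sqrt n$ empirical-process rate and bounded NN degree --- are ingredients, but on their own they only give $O_P(n^{-1/2})$ for the correction term itself, not $o_P(n^{-1/2})$ for the remainder after projection.

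The variance floor has the same dependency on $h$. The paper establishes $\liminf_n n\Var[\xi_n]>0$ by rewriting $\E\Var[\xi_n\mid\mX]$ as a positive combination of conditional variances such as $\Var\big[F_Y(Y_1\wedge\tilde Y_1)+\tfrac12 h(Y_1)+\tfrac12 h(\tilde Y_1)\,\big|\,X_1\big]$, which is strictly positive precisely when $Y$ is not a measurable function of $X$. Your concern that the oracle edge term and the i.i.d.\ correction might annihilate each other is well founded; the paper resolves it only by exhibiting these explicit quadratic forms, which again requires having computed $h$.
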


For any $a,b \in \bR$, write $a \vee b = \max\{a,b\}$ and $a \wedge b = \min\{a,b\}$. Define 
\begin{align*}
   &\hat{\sigma}^2:=\\
   & 36\Big\{\frac{1}{n^3} \sum_{i=1}^n \Big(R_i \wedge R_{N_1(i)}\Big)^2 \Big(1+ \ind \Big( i = N_1(N_1(i)) \Big) \Big) \\
  &+ \frac{1}{n^3} \sum_{i=1}^n \Big(R_i \wedge R_{N_1(i)}\Big)\Big(R_i \wedge R_{N_2(i)} \Big) \Big( 2\ind \Big( i \neq N_1(N_1(i)) \Big) + \Big\lvert \Big\{j: j \neq i, N_1(j) = N_1(i)\Big\} \Big\rvert \Big)\\
  &- \frac{1}{n^3} \sum_{i=1}^n \Big(R_i \wedge R_{N_1(i)}\Big)  \Big(R_{N_2(i)} \wedge R_{N_3(i)}\Big) \Big( 1 + \ind \Big( i \neq N_1(N_1(i)) \Big) + \Big\lvert \Big\{j: j \neq i, N_1(j) = N_1(i)\Big\} \Big\rvert \Big)\\
  &+ \frac{4}{n^2(n-1)} \sum_{\substack{i,j=1\\i\neq j}}^n \ind\Big(R_i \le R_j \wedge R_{N_1(j)}\Big)\Big(R_i \wedge R_{N_1(i)}\Big)\\
  & - \frac{2}{n^2(n-1)} \sum_{\substack{i,j=1\\i\neq j}}^n \ind\Big(R_i \le R_j \wedge R_{N_1(j)}\Big)\Big(R_{N_1(i)} \wedge R_{N_2(i)}\Big)\\
  &+ \frac{1}{n^2(n-1)} \sum_{\substack{i,j=1\\i\neq j}}^n \Big(R_i \wedge R_{N_1(i)} \wedge R_j \wedge R_{N_1(j)}\Big) - 4 \Big[\frac{1}{n^2} \sum_{i=1}^n \Big(R_i \wedge R_{N_1(i)}\Big)\Big]^2\Big\},
\end{align*}
and
\begin{align*}
  & \hat{\overline{\sigma}}^2:= \\
  &36\Big\{\frac{1}{n^3} \sum_{i=1}^n \Big(R_i \wedge R_{\overline{N}_1(i)}\Big)^2 + \frac{2}{n^3} \sum_{i=1}^n \Big(R_i \wedge R_{\overline{N}_1(i)}\Big)\Big(R_i \wedge R_{\overline{N}_2(i)} \Big) \\
  &- \frac{2}{n^3} \sum_{i=1}^n \Big(R_i \wedge R_{\overline{N}_1(i)}\Big)  \Big(R_{\overline{N}_2(i)} \wedge R_{\overline{N}_3(i)}\Big)+ \frac{4}{n^2(n-1)} \sum_{\substack{i,j=1\\i\neq j}}^n \ind\Big(R_i \le R_j \wedge R_{\overline{N}_1(j)}\Big)\Big(R_i \wedge R_{\overline{N}_1(i)}\Big) \\
  &- \frac{2}{n^2(n-1)} \sum_{\substack{i,j=1\\i\neq j}}^n \ind\Big(R_i \le R_j \wedge R_{\overline{N}_1(j)}\Big)\Big(R_{\overline{N}_1(i)} \wedge R_{\overline{N}_2(i)}\Big)\\
  &+ \frac{1}{n^2(n-1)} \sum_{\substack{i,j=1\\i\neq j}}^n \Big(R_i \wedge R_{\overline{N}_1(i)} \wedge R_j \wedge R_{\overline{N}_1(j)}\Big) - 4 \Big[\frac{1}{n^2} \sum_{i=1}^n \Big(R_i \wedge R_{\overline{N}_1(i)}\Big)\Big]^2\Big\}.
\end{align*}

\begin{theorem}[Variance estimation]\label{thm:var}
  For any fixed continuous $F_{X,Y}$, it holds true that
  \begin{align}\label{eq:var-est1}
    \hat{\sigma}^2 - n \Var[\xi_n] \longrightarrow 0 ~~ {\rm in~probability},
  \end{align}
  and
  \begin{align}\label{eq:var-est2}
    \hat{\overline{\sigma}}^2 - n \Var[\overline{\xi}_n] \longrightarrow 0 ~~ {\rm in~probability}.
  \end{align}
\end{theorem}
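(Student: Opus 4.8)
The plan is to start from the exact identity $\xi_n = \frac{6}{n^2-1}S_n - \frac{2n+1}{n-1}$ with $S_n := \sum_{i=1}^n R_i\wedge R_{N(i)}$, so that $n\Var[\xi_n] = \frac{36n}{(n^2-1)^2}\Var[S_n] = \frac{36}{n^3}\Var[S_n] + o(1)$ (using $\Var[S_n]=O(n^3)$, which is implicit in the variance bound of $36$). Since the factored-out $36$ matches, it suffices to show that the bracketed expression defining $\hat\sigma^2$ equals $\tfrac1{n^3}\Var[S_n]+o_p(1)$. The reformulation I would rely on throughout is $R_i\wedge R_{N(i)} = \sum_{k=1}^n\ind(Y_k\le Y_i\wedge Y_{N(i)})$ (the two rank-counts are nested), hence $S_n=\sum_{i,k}A_{ik}$ with $A_{ik}:=\ind(Y_k\le Y_i\wedge Y_{N(i)})$. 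This exposes the three index roles---a point $i$, its neighbor $N(i)$, and a comparison index $k$---that organize the variance expansion, and it identifies the events $\ind(R_i\le R_j\wedge R_{N(j)})$ in the fourth and fifth groups of $\hat\sigma^2$ as $A_{ji}$, and the products $R_i\wedge R_{N(i)}\wedge R_j\wedge R_{N(j)}$ in the sixth group as $\sum_k A_{ik}A_{jk}$.

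Rather than passing through a fixed limit, I would prove $\hat\sigma^2-n\Var[\xi_n]=\big(\hat\sigma^2-\E[\hat\sigma^2]\big)+\big(\E[\hat\sigma^2]-n\Var[\xi_n]\big)$ and show each bracket is $o_p(1)$. For the second (bias) bracket, expand $\Var[S_n]=\sum_{i,i'}\cov(T_i,T_{i'})$ with $T_i:=R_i\wedge R_{N(i)}$, and further $\cov(T_i,T_{i'})=\sum_{k,k'}\cov(A_{ik},A_{i'k'})$. Conditionally on $\mathbf X=(X_1,\dots,X_n)$ each $A_{ik}$ is a function of $(Y_i,Y_{N(i)},Y_k)$ alone, so I would classify the quadruples $(i,k,i',k')$ by the coincidence/adjacency pattern of $\{i,N(i),k\}$ and $\{i',N(i'),k'\}$ in the NN graph: conditional covariances vanish for disjoint patterns, and the H\'ajek representation underlying Theorem~\ref{thm:main} controls the residual dependence transmitted through the shared ranks and through the globally determined neighbor identities. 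The surviving classes are exactly those weighted by the seven groups of $\hat\sigma^2$: the diagonal $i=i'$ (first group, with the mutual-neighbor correction $\ind(i=N(N(i)))$), the shared-neighbor and second-neighbor configurations (second and third groups, weighted by $\ind(i\ne N(N(i)))$ and by the in-degree $\lvert\{j\ne i:N(j)=N(i)\}\rvert$), the shared-comparison-index configurations $k=i'$ (fourth and fifth groups, carrying $A_{ji}$), the fully shared class $k=k'$ (sixth group), and the centering contribution of the products of means (seventh group). Matching these weights to the true configuration counts and absorbing boundary and higher-order classes into $o(1)$ gives $\E[\hat\sigma^2]=n\Var[\xi_n]+o(1)$.

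For the first (concentration) bracket I would bound $\Var[\hat\sigma^2]\to0$ directly. After dividing ranks by $n$ every summand lies in $[0,1]$ and depends only on a bounded-radius neighborhood of the NN graph; crucially, a point is the nearest neighbor of at most a dimension-only constant $c_d$ of the others, so the weights $\lvert\{j\ne i:N(j)=N(i)\}\rvert$ are uniformly $O(1)$. By the local (stabilizing) dependence of NN graphs, two summands are uncorrelated once their neighborhoods are disjoint and well separated, so the number of non-vanishing covariances is of strictly lower order than the square of the normalization, giving $\Var[\hat\sigma^2]\to0$; Chebyshev then closes the first bracket. Combining the two brackets yields $\hat\sigma^2-n\Var[\xi_n]\to0$ in probability. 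The statement \eqref{eq:var-est2} for $\overline\xi_n$ follows along identical lines with $N(\cdot)$ replaced by the right-NN $\overline N(\cdot)$: in dimension one the right-NN graph has in-degree exactly one and admits no mutual neighbors, so $\ind(i=\overline N(\overline N(i)))\equiv0$ and $\lvert\{j\ne i:\overline N(j)=\overline N(i)\}\rvert\equiv0$, which is precisely why the data-dependent weights of $\hat\sigma^2$ collapse to the constants $1$ and $2$ appearing in $\hat{\overline\sigma}^2$.

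I expect the bias bracket to be the main obstacle. Because each $T_i$ depends on all $n$ observations through the ranks (equivalently through the empirical distribution of the $Y$'s) and through the globally determined neighbor identities, $\Var[S_n]$ is not a sum of local variances, and at the level of $\cov(\E[A_{ik}\mid\mathbf X],\E[A_{i'k'}\mid\mathbf X])$ the naive ``disjoint index sets $\Rightarrow$ independence'' reasoning fails. Moreover $\E[S_n^2]$ and $(\E S_n)^2$ are individually of order $n^4$ while their difference is only $O(n^3)$, so the matching must track a genuine $O(n)$-fold cancellation and still resolve the correct $O(1)$ residual after normalization by $n^3$. Separating the common empirical-process fluctuation from the genuinely local NN contributions via the H\'ajek representation, and verifying that the delicate cancellations leave behind exactly the seven weighted sums of $\hat\sigma^2$ up to $o(1)$, is the crux where the H\'ajek representation and the NN-graph geometry must be combined most carefully.
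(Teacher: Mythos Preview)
Your high-level bias--variance split $\hat\sigma^2-n\Var[\xi_n]=(\hat\sigma^2-\E[\hat\sigma^2])+(\E[\hat\sigma^2]-n\Var[\xi_n])$ is reasonable, but both brackets contain gaps.

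For the concentration bracket, your claim that after dividing ranks by $n$ ``every summand\ldots depends only on a bounded-radius neighborhood of the NN graph'' is false: $R_i/n=F_Y^{(n)}(Y_i)$ depends on \emph{all} of $Y_1,\ldots,Y_n$, so summands indexed by disjoint NN-neighborhoods are still correlated through the empirical distribution function. The stabilization argument you sketched therefore does not apply. The fix is to first replace $R_i/n$ by $F_Y(Y_i)$ via Glivenko--Cantelli at a uniform $o_p(1)$ cost, after which the summands are genuinely local; this is exactly the opening move in the paper's proof of Lemma~\ref{lemma:var1}. (Alternatively, Efron--Stein applied directly to the rank-based statistic gives $\Var[\hat\sigma^2]=O(n^{-1})$, but that is a different argument from the one you wrote.)

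The bias bracket has a more serious conceptual gap. You describe the seven groups of $\hat\sigma^2$ as weighted counts of coincidence patterns in the expansion $\Var[S_n]=\sum_{i,i',k,k'}\cov(A_{ik},A_{i'k'})$, but the groups involving $R_i\wedge R_{N_2(i)}$, $R_{N_2(i)}\wedge R_{N_3(i)}$, and $R_{N(i)}\wedge R_{N_2(i)}$ are \emph{not} configuration counts at all: the indices $N_2(i),N_3(i)$ never occur in $S_n$, so no coincidence pattern in $\Var[S_n]$ can produce them. Their role is different. Because $X_{N_2(i)},X_{N_3(i)}$ are close to $X_i$ yet distinct from $X_i,X_{N(i)}$, the pair $(Y_{N_2(i)},Y_{N_3(i)})$ is approximately a fresh independent draw from the same conditional law given $X_i$; thus $n^{-2}(R_i\wedge R_{N(i)})(R_{N_2(i)}\wedge R_{N_3(i)})$ estimates the squared conditional mean $\big(\E[F_Y(Y_1\wedge\tY_1)\mid X_1]\big)^2$, and subtracting it converts the first group into an estimator of the \emph{conditional} variance $\E\big[\Var[F_Y(Y_1\wedge\tY_1)\mid X_1]\big]$. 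These subtractions arise not from enumerating $\Var[S_n]$ directly but from the law-of-total-variance split $n\Var[\xi_n]=n\E[\Var[\xi_n\mid\mX]]+n\Var[\E[\xi_n\mid\mX]]$, which is how the paper organizes the proof: the two pieces are first identified as explicit (possibly $n$-dependent) expectations via Lemmas~\ref{lemma:hayek1}, \ref{lemma:hayek2} and Lemma~\ref{lemma:variance,cond}, and then Lemmas~\ref{lemma:var1} and \ref{lemma:var2} show that each rank-based group in $\hat\sigma^2$ converges in probability to its matching piece. Your direct-enumeration plan, as written, cannot generate the $N_2,N_3$ terms and therefore cannot close the bias bracket.
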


The following two propositions further complement Theorems \ref{thm:main} and \ref{thm:var}. 

\begin{proposition}[Asymptotic bias, \cite{azadkia2019simple}] \phantomsection \label{prop:bias} Assume $F_{X,Y}$ to be fixed and continuous.
  \begin{enumerate}[itemsep=-.5ex,label=(\roman*)]
    \item\label{prop:bias,null} If $X$ and $Y$ are independent, then
    \begin{align*}
      \E[\xi_n] = -\frac{1}{n-1}~~~{\rm and}~~~ \E[\overline{\xi}_n] = 0~~({\rm if}~d=1).
    \end{align*}
    \item\label{prop:bias,alter} If there exist fixed constants $\beta,C,C_1,C_2>0$ such that for any $t \in \bR$ and $x,x' \in \bR^d$,
    \begin{align*}
      &\Big\lvert \P\big(Y \ge t \given X = x\big) - \P\big(Y \ge t \given X=x'\big) \Big\rvert \le C(1+\lVert x \rVert^\beta + \lVert x' \rVert^\beta)\lVert x-x' \rVert\\
      {\rm and}~~~&\P(\lVert X \rVert \ge t) \le C_1 e^{-C_2 t},
    \end{align*}
    we then have
    \begin{align*}
      \Big\lvert \E[\xi_n] - \xi \Big\rvert = O\Big(\frac{(\log n)^{d+\beta+1+\ind(d=1)}}{n^{1/d}}\Big) ~~{\rm and}~~ \Big\lvert \E[\overline{\xi}_n] - \xi \Big\rvert = O\Big(\frac{(\log n)^{\beta+3}}{n}\Big)~~({\rm if}~d=1).
    \end{align*}
  \end{enumerate}
\end{proposition}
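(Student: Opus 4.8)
The plan is to treat the two parts separately: the first by an exact combinatorial computation, the second by a coupling argument driven by the nearest-neighbor distance. For part~\ref{prop:bias,null}, the key point is that when $X\indep Y$ the rank vector $(R_1,\dots,R_n)$ is a uniformly random permutation of $\{1,\dots,n\}$ that is independent of $(X_1,\dots,X_n)$, and hence of the entire nearest-neighbor structure $N(\cdot),\overline N(\cdot)$. I would condition on $(X_1,\dots,X_n)$ and exploit exchangeability of the ranks. Since $N(i)\neq i$ for every $i$, one gets $\E[\min\{R_i,R_{N(i)}\}]=\E[\min\{R_1,R_2\}]=(n+1)/3$; summing over $i$ and substituting into~\eqref{eq:xin} gives $\E[\xi_n]=-1/(n-1)$ after simplification. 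For $\overline\xi_n$, exactly one index $i$ (the one with the largest $X_i$) satisfies $\overline N(i)=i$ and contributes $0$, while each of the remaining $n-1$ indices contributes $\E[|R_1-R_2|]=(n+1)/3$; substituting into~\eqref{eq:barxin} yields $\E[\overline\xi_n]=0$.

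For part~\ref{prop:bias,alter}, I would first put $\xi$ and the statistics into a common form. Using the identities $\min\{R_i,R_{N(i)}\}=\sum_{j=1}^n\ind(Y_j\le Y_i\wedge Y_{N(i)})$ and $|R_i-R_{\overline N(i)}|=R_i+R_{\overline N(i)}-2(R_i\wedge R_{\overline N(i)})$, together with the elementary population identity (via the substitution $u=F_Y(y)$ in~\eqref{eq:xi})
\begin{align*}
  \xi=6\,\E\big[F_Y(Y)\wedge F_Y(Y')\big]-2,
\end{align*}
where $Y,Y'$ are conditionally i.i.d.\ given $X$, I would decompose the bias into three pieces: (a) the $O(1/n)$ discrepancy between the normalizing constants $6/(n^2-1),(2n+1)/(n-1)$ and their limits $6/n^2,2$ (for $\overline\xi_n$ this also absorbs the $O(n)$ boundary correction $R_{\overline N}$ coming from the right nearest neighbor being the adjacent upper order statistic); (b) the error in replacing $\tfrac1n\min\{R_i,R_{N(i)}\}$ by $F_Y(Y_i)\wedge F_Y(Y_{N(i)})$, i.e.\ the deviation of the empirical law of $\{Y_j\}$ from $F_Y$, which is $O(1/n)$ in expectation; and (c) the genuine nearest-neighbor mismatch, namely that $Y_{N(i)}$ is drawn from $F_{Y\given X=X_{N(i)}}$ rather than from an independent copy of $F_{Y\given X=X_i}$.

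The crux is piece (c), which I expect to be the main obstacle. I would condition on $(X_i,X_{N(i)})$ and bound the change in the conditional law of $Y$ by the Lipschitz-type hypothesis
\begin{align*}
  \big\lvert \P(Y\ge t\given X=x)-\P(Y\ge t\given X=x')\big\rvert\le C\big(1+\lVert x\rVert^\beta+\lVert x'\rVert^\beta\big)\lVert x-x'\rVert,
\end{align*}
reducing piece (c) to a moment of the nearest-neighbor distance $\lVert X_i-X_{N(i)}\rVert$ weighted by the growth factor $1+\lVert X_i\rVert^\beta+\lVert X_{N(i)}\rVert^\beta$. The remaining and most delicate step is to show this weighted moment has the claimed size. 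Under the exponential tail bound $\P(\lVert X\rVert\ge t)\le C_1e^{-C_2t}$, all $n$ sample points lie, with high probability, in a ball of radius $O(\log n)$; on this ball a covering/occupancy argument controls the nearest-neighbor distances, and tracking the interplay of the $(\log n)^\beta$ growth factor, the logarithmic cost of covering the effective support in $d$ dimensions, and the additional $\log$ needed in the $d=1$ boundary case produces the stated rate $O\big((\log n)^{d+\beta+1+\ind(d=1)}/n^{1/d}\big)$ for $\E[\xi_n]$. For $\overline\xi_n$ in $d=1$, the right nearest neighbor is the adjacent upper order statistic, whose spacing is $O(\log n/n)$ uniformly over the effective support, yielding $O\big((\log n)^{\beta+3}/n\big)$. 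Since the proposition restates \cite{azadkia2019simple}, I would defer the bookkeeping of constants to their proof.
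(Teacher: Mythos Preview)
Your proposal is correct and follows essentially the same route as the paper. For part~\ref{prop:bias,null} both arguments use exchangeability of the ranks under independence to reduce to $\E[\min\{R_1,R_2\}]=(n+1)/3$ and $\E[|R_1-R_2|]=(n+1)/3$. For part~\ref{prop:bias,alter} the paper also writes $\min\{R_i,R_{N(i)}\}=\sum_k\ind(Y_k\le Y_i\wedge Y_{N(i)})$, separates the diagonal and normalizing-constant corrections as $O(1/n)$, shows $\xi=6\E[\ind(Y_2\le Y_1\wedge \tilde Y_1)]-2$ (equivalent to your identity $\xi=6\E[F_Y(Y)\wedge F_Y(Y')]-2$), and isolates the genuine coupling error as $\int \E\lvert G_{X_{N(1)}}(t)-G_{X_1}(t)\rvert\,\d\mu(t)$ before invoking the Lipschitz hypothesis and deferring the weighted nearest-neighbor moment bound to Lemmas~14.1--14.2 of \cite{azadkia2019simple}; this is exactly your piece~(c). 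One small wording caution: your piece~(b) is $O(1/n)$ at the level of \emph{expectation} because only the two indices $j\in\{i,N(i)\}$ differ from the population contribution, not because $\lVert F_Y^{(n)}-F_Y\rVert_\infty$ itself is $O(1/n)$.
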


\begin{proposition}[Asymptotic variance] \phantomsection \label{prop:variance} Assume $F_{X,Y}$ to be fixed and continuous.
  \begin{enumerate}[itemsep=-.5ex,label=(\roman*)]
    \item\label{prop:variance,exist} \nb{The limits of $n \Var[\xi_n]$ and $n \Var[\overline{\xi}_n]$ exist.}
    \item\label{prop:variance,lower} If $Y$ is not a measurable function of $X$ almost surely, 
    \begin{align*}
      \nb{\lim_{n \to \infty}} \left\{n \Var[\xi_n]\right\} > 0~~~{\rm and}~~~ \nb{\lim_{n \to \infty}}  \left\{n \Var[\overline{\xi}_n] \right\} > 0 ~~({\rm if}~d=1).
    \end{align*}
    On the other hand, if $Y$ is a measurable function of $X$ almost surely, then
    \begin{align*}
      \lim_{n \to \infty}  \left\{n \Var[\xi_n] \right\} = 0~~~{\rm and}~~~ \lim_{n \to \infty}  \left\{n\Var[\overline{\xi}_n] \right\} = 0 ~~({\rm if}~d=1).
    \end{align*}
    \item\label{prop:variance,upper} It holds true that %\fbox{notice the change}
    \begin{align}\label{eq:upper-bound}
      \nb{\lim_{n \to \infty}}  \left\{n \Var[\xi_n] \right\} <\infty~~{\rm and}~~\nb{\lim_{n \to \infty}}  \left\{n \Var[\overline{\xi}_n] \right\} \le 36~~({\rm if}~d=1).
    \end{align}
    If in addition $F_X$ is absolutely continuous, then
    \begin{align}\label{eq:upper-bound-2}
      \nb{\lim_{n \to \infty}}  \left\{n \Var[\xi_n] \right\} \le 36 - 9 \kq_d + 9 \ko_d,
    \end{align}
    where $\kq_{d}$ and $\ko_{d}$ are two positive constants depending only on $d$, with explicit values:
    \begin{align*}
    &\kq_d:=\Big\{2-I_{3/4}\Big(\frac{d+1}{2},\frac{1}{2}\Big)\Big\}^{-1},
    ~~~~
    I_{x}(a,b):=\frac{\int_{0}^{x}t^{a-1}(1-t)^{b-1} \d t}
                     {\int_{0}^{1}t^{a-1}(1-t)^{b-1} \d t},
    \yestag\label{eq:defn_kqd}\\
    &\ko_{d}:=\int_{\Gamma_{d;2}}\exp\Big[-\lambda\Big\{B(\mw_1,\lVert\mw_1\rVert_{})\cup B(\mw_2,\lVert\mw_2\rVert_{})\Big\}\Big]\d(\mw_1,\mw_2),
    \yestag\label{eq:defn_kod}\\
    &\Gamma_{d;2}:=\Big\{(\mw_1,\mw_2)\in(\bR^d)^2: \max(\lVert\mw_1\rVert_{},\lVert\mw_2\rVert_{})<\lVert\mw_1-\mw_2\rVert_{}\Big\},
    \end{align*} 
    $B(\mw_1,r)$ denotes the ball of radius $r$ centered at $\mw_1$, and $\lambda(\cdot)$ denotes the Lebesgue measure. 
  \end{enumerate}
\end{proposition}

\begin{remark}
It is worth noting that \eqref{eq:main1} and \eqref{eq:var-est1} hold without requiring $F_X$ to be absolutely continuous (with regard to the Lebesgue measure). In particular, $\xi_n$ is still asymptotically normal even when $X$ is supported on a low-dimensional manifold in $\bR^d$, e.g., the $(d-1$)-dimensional unit sphere. 
\end{remark}

\begin{remark}\label{remark:degenerate}
For establishing asymptotic normality, Theorem \ref{thm:main} requires $Y$ to be not a measurable function of $X$. When $Y$ is perfectly dependent on $X$, Proposition \ref{prop:variance} suggests that $\xi_n$ and $\overline \xi_n$ are degenerate; indeed, \citet[Remark 9 after Theorem 1.1]{chatterjee2020new} showed that when $Y$ is an increasing transformation of $X$, $\overline\xi_n=(n-2)/(n+1)$, which reduces to a deterministic constant. \nb{The general forms of \(\xi_n\) and \(\overline{\xi}_n\) when \(Y\) is perfectly dependent on \(X\) are currently still open problems.}
\end{remark}

\begin{remark}
The assumptions in Proposition \ref{prop:bias}\ref{prop:bias,alter} correspond to Assumptions A1 and A2 in \cite{azadkia2019simple}. Its proof is a minor twist to that of \citet[Theorem 4.1]{azadkia2019simple}, which we credit this proposition to. On the other hand, Proposition \ref{prop:variance} is genuinely new, although the constants in \eqref{eq:defn_kqd} and \eqref{eq:defn_kod} can be traced to \cite{MR937563}, \cite{MR914597}, and in particular, \citet[Theorem 3.1]{shi2021ac}.
\end{remark}

%\begin{remark}
%The upper bounds in \eqref{eq:upper-bound} and \eqref{eq:upper-bound-2}, although rate-optimal, are significantly larger than the corresponding limiting {\it null} variances of $\xi_n$ and $\overline\xi_n$ (cf. \eqref{eq:chatterjee-var} and \eqref{eq:AC-var} ahead), which are the asymptotic variances of $\overline\xi_n$ and $\xi_n$ under independence between $X$ and $Y$. At this time, we still have no idea about (a) how sharp the constants in these bounds are; (b) how to sharpen the constants if they are not optimal; and (c) how to consistently estimate the asymptotic variances. These questions have to be left for future studies.
%\end{remark}

Combining Theorems \ref{thm:main}, \ref{thm:var} with Propositions \ref{prop:bias} and \ref{prop:variance}, when $d=1$, one could immediately establish confidence intervals for $\xi$ using either $\xi_n$ or $\overline\xi_n$ since the asymptotic bias in this case is root-$n$ ignorable. For instance, as $d=1$ and $n$ large enough, an $1-\alpha$ confidence interval of $\xi$ can be constructed as
\[
(\overline\xi_n-z_{1-\alpha/2}\cdot\hat{\overline\sigma}/\sqrt{n}, ~~\overline\xi_n+z_{1-\alpha/2}\cdot\hat{\overline\sigma}/\sqrt{n}),
\]
where for any $\beta\in(0,1)$, $z_\beta$ represents the $\beta$-quantile of a standard normal distribution. One could similarly construct large-sample tests for the following null hypothesis
\[
H_0: \xi \leq \kappa,~~~\text{ (for a given and fixed }\kappa<1)
\]
using, e.g., \nb{the test with significance level $\alpha \in (0,1)$ is 
\begin{align}\label{eq:test}
\overline{T}:=\ind(\overline\xi_n>\kappa+z_{1-\alpha}\hat{\overline\sigma}/\sqrt{n})
\end{align}
 and the p-value is $1 - \Phi(\sqrt{n}(\overline\xi_n - \kappa)/\hat{\overline\sigma})$, where $\Phi$ is the CDF of the standard normal distribution. The size validity, consistency and local power analysis of the test are established in the following proposition.
}

\nb{
\begin{proposition}\phantomsection \label{prop:test} Assume $F_{X,Y}$ to be continuous and $Y$ is not a measurable function of $X$ almost surely. Assume $d=1$ and the assumptions of Proposition~\ref{prop:bias}\ref{prop:bias,alter} hold.  
  \begin{enumerate}[itemsep=-.5ex,label=(\roman*)]
    \item\label{prop:size} For any fix $F_{X,Y}$ satisfying $H_0: \xi \leq \kappa$, denoting $\P_{H_0}$ as the corresponding probability measure, we have $\limsup_{n \to \infty} \P_{H_0}(T=1) \le \alpha$.
    \item\label{prop:power} For any fix $F_{X,Y}$ violating $H_0: \xi \leq \kappa$, denoting $\P_{H_1}$ as the corresponding probability measure, we have $\lim_{n \to \infty} \P_{H_1}(T=1)=1$.
    \item\label{prop:local} For a sequence $F_{X,Y}$ satisfying $\xi^{(n)} = \kappa + n^{-1/2} h$ for a fixed $h>0$, denoting $\P_{H_{1,n}}$ as the corresponding probability measure, we have 
    \[
    \lim_{n \to \infty} \P_{H_{1,n}}(T=1)=1 - \Phi(z_{1-\alpha} - h/\overline\sigma),
    \] 
    where $\overline\sigma^2 = \lim_{n \to \infty} \{n \Var[\overline{\xi}_n] \}$ and $\Phi$ is the CDF of the standard normal distribution.
  \end{enumerate}
\end{proposition}
}

\begin{remark}\label{remark:debias}
Checking Proposition \ref{prop:bias}, when $d>1$, an asymptotically non-ignorable bias term may appear in the central limit theorem (CLT) and thus confidence intervals can only be established for $\E \xi_n$ instead of $\xi$. To further debias $\xi_n$, enforcing more assumptions on $F_{X,Y}$ seems inevitable to us. A possible approach is to follow the similar derivations made in \cite{MR3909934}, who studied the problem of multivariate entropy estimation using NN methods. \nb{As long as we can find an estimator \(\Delta_n\) of \(\mathbb{E}[\xi_n] - \xi\) such that the difference is negligible at the \(\sqrt{n}\) rate, all results in Proposition~\ref{prop:test} apply directly to the test statistic \(\xi_n - \Delta_n\) using the same variance estimator \(\hat{\sigma}^2\).}
\end{remark}

\nb{
\begin{remark}
It is worth noting that in the case of \(\kappa = 0\), Proposition~\ref{prop:bias}\ref{prop:local} does not contradict the findings of \citet{shi2020power} and \citet{cao2020correlations}, who showed that Chatterjee's rank correlation exhibits zero local power under the standard root-\(n\) asymptotic framework. We refer readers to \citet[Theorem 2.2]{auddy2021exact} for related results and discussion. Proposition~\ref{prop:bias}\ref{prop:local} extends their conclusions to settings beyond independence.

\end{remark}
}

\nb{\begin{remark}
The codes for computing $\overline{\xi}_n$ and $\hat{\overline{\sigma}}^2$ are available at \url{https://github.com/zhexiaolin/Limit-theorems-of-Chatterjee-s-rank-correlation}. The codes for the empirical studies are also in the repository.
\end{remark}
}

\subsection{Related literature}\label{sec:liter}

The study of Dette-Siburg-Stoimenov's dependence measure \citep{MR3024030} is receiving considerably increasing attention, partly due to the introduction of Chatterjee's rank correlation \citep{chatterjee2020new} as an elegant approach to estimating it. Nowadays, this growing literature has included \cite{azadkia2019simple}, \cite{cao2020correlations}, \cite{shi2020power}, \cite{gamboa2022global}, \cite{deb2020kernel}, \cite{huang2020kernel}, \cite{auddy2021exact}, \cite{shi2021ac}, \cite{lin2021boosting}, \cite{fuchs2021bivariate}, \cite{azadkia2021fast}, \cite{griessenberger2022multivariate}, \cite{strothmann2022rearranged}, \cite{zhang2022asymptotic}, \cite{bickel2022measures}, and \cite{chatterjee2022estimating}, among many others. We also refer the readers to \cite{han2021extensions} for a short survey on some most recent progress.

Below we outline the results in literature that are most relevant to Theorem \ref{thm:main}.

\begin{enumerate}[itemsep=-.5ex,label=(\arabic*)]
\item In his original paper, Chatterjee established the asymptotic normality of $\overline{\xi}_n$ under an important additional assumption that $X$ is independent of $Y$. In particular, he showed
\begin{align}\label{eq:chatterjee-var}
\sqrt{n} \overline{\xi}_n \longrightarrow N(0,2/5)~~\text{in distribution},
\end{align}
if $Y$ is continuous and independent of $X$ \citep[Theorem 2.1]{chatterjee2020new}.
\item Although Azadkia and Chatterjee introduced $\xi_n$ as an extension of $\overline\xi_n$ to multivariate $X$, their results did not include a CLT for $\xi_n$, which was listed as an open problem in \cite{azadkia2019simple}. Notable progress was later made by \cite{deb2020kernel} and \cite{shi2021ac}, which we shall detail below.
\item In \cite{deb2020kernel}, the authors generalized Azadkia and Chatterjee's original proposal to arbitrary metric space via combining the graph- and kernel-based methods. In particular, under independence between $X$ and $Y$ and some additional assumptions on $F_{X,Y}$, \citet[Theorem 4.1]{deb2020kernel} established the following CLT for $\xi_n$,
\[
\xi_n/S_n \longrightarrow N(0,1) ~~\text{in distribution},
\]
where $S_n$ is a data-dependent normalizing statistic.
\item In \cite{shi2021ac}, the authors re-investigated the proof of \cite{deb2020kernel} and, in particular, derived the closed form of the limit of $\Var[\xi_n]$. More specifically, \citet[Theorem 3.1(ii)]{shi2021ac} showed that, under independence between $X$ and $Y$ and some additional assumptions on $F_{X,Y}$, 
\begin{align}\label{eq:AC-var}
\sqrt{n} \xi_n \longrightarrow N\Big(0,\frac{2}{5}+\frac{2}{5} \kq_d + \frac{4}{5}\ko_d\Big) ~~\text{in distribution}, 
\end{align}
where $\kq_d$ and $\ko_d$ are two positive constants that only depend on $d$ and were explicitly defined in Proposition \ref{prop:variance}\ref{prop:variance,upper}.
\item In a related study, in order to boost the power of independence testing, \cite{lin2021boosting} revised $\overline{\xi}_n$ via incorporating more than one right nearest neighbor to its construction. Assuming independence between $X$ and $Y$ and some assumptions on $F_{X,Y}$, \citet[Theorem 3.2]{lin2021boosting} established the following CLT for their correlation coefficient $\overline\xi_{n,M}$ (with $M$ representing the number of right NNs to be included): 
\[
\sqrt{nM} \overline\xi_{n,M} \longrightarrow N(0,2/5) ~~\text{in distribution}, 
\]
as long as $M$ is increasing at a certain rate.
\end{enumerate}
 
All the above CLTs only hold when {\it $Y$ is independent of $X$}. The following papers, on the other hand, studied the statistics' behavior when $Y$ is possibly dependent on $X$. They, however, can only handle {\it local alternatives}, i.e., such distributions where the dependence between $X$ and $Y$ is so weak that $F_{X,Y}$ is very close to $F_XF_Y$. 
\begin{enumerate}[itemsep=-.5ex,label=(\arabic*)]
\setcounter{enumi}{6}
\item Assuming $\xi = \xi^{(n)} \to 0$ as $n \to \infty$ at a certain rate, \citet[Theorem 2.3]{auddy2021exact} showed 
\[
\sqrt{n} (\xi_n - \xi^{(n)}) \longrightarrow N(0,2/5)~~\text{in distribution}. 
\]
\item\label{remark-enum-8} For quadratic mean differentiable (QMD) classes of alternatives to the null independence one, \citet[Section 4.4]{cao2020correlations} and \citet[Proof of Theorem 1]{shi2020power} (the latter is focused on the special mixture and rotation type alternatives) established CLTs for Chatterjee's rank correlation $\overline{\xi}_n$ via Le Cam's third lemma. 
\item Under similar local dependence conditions as \ref{remark-enum-8}, \citet[Proof of Theorem 4.1]{shi2021ac} established the CLTs for Azadkia-Chatterjee's graph-based correlation coefficient $\xi_n$.
\end{enumerate}

\subsection{Proof sketch}\label{sec:proof-sketch}

To establish Theorem~\ref{thm:main}, the first and most important step is to find the correct forms of H\'ajek representations \citep{MR1680991} for $\xi_n$ and $\overline{\xi}_n$ with regard to a general distribution function $F_{X,Y}$ that is not necessarily equal to $F_XF_Y$. This step is technically highly challenging as we have to carefully monitor the dependence between $X$ and $Y$; it shall occupy the most of the rest paper. Interestingly, the newly found H\'ajek representation is distinct from that used in \cite{deb2020kernel}, \cite{cao2020correlations}, \cite{shi2020power}, \cite{auddy2021exact}, and \cite{lin2021boosting}, although reducing to it under independence; see Remark \ref{eq:hajek} ahead for more discussions about this point.

For sketching the proof of Theorem \ref{thm:main}, let us first introduce some necessary notation. For any $t \in \bR$, define
\begin{align}\label{eq:h}
  G_X(t):= \P\big(Y \ge t \given X\big)~~~{\rm and}~~~h(t): = \E \Big[G_X^2(t)\Big].
\end{align}
Ahead we will show that the H\'ajek representations of $\xi_n$ and $\overline{\xi}_n$ take the forms
\begin{align}\label{eq:xin*}
\xi_n^* := \frac{6n}{n^2-1} \Big(\sum_{i=1}^n \min\big\{F_Y(Y_i), F_Y(Y_{N_1(i)})\big\} + \sum_{i=1}^n h(Y_i) \Big)
\end{align}
and
\begin{align}\label{eq:barxin*}
  \overline{\xi}_n^* := \frac{6n}{n^2-1} \Big(\sum_{i=1}^n \min\big\{F_Y(Y_i), F_Y(Y_{\overline{N}_1(i)})\big\} + \sum_{i=1}^n h(Y_i) \Big).
\end{align}

Why so? Below we give some intuition. Let us use ``$\wedge$'' to represent the minimum of two numbers and focus on $\xi_n$ as the analysis for $\overline{\xi}_n$ is identical. From \eqref{eq:xin}, $\xi_n$ takes the form 
\begin{align}\label{eq:hajak-0}
n^{-2} \sum_{i=1}^n [R_i \wedge R_{N_1(i)}]
\end{align}
and a natural component of its H\'ajek representation shall be
\begin{align}\label{eq:hajak-1}
n^{-1} \sum_{i=1}^n [F_Y(Y_i) \wedge F_Y(Y_{N_1(i)})], 
\end{align}
which is via replacing the empirical distribution by the population one. We use H\'ajek projection \cite[Lemma 11.10]{MR1652247} to find the remaining component via checking the difference between \eqref{eq:hajak-0} and \eqref{eq:hajak-1}.

Fix an integer $k\in[1,n]$ and consider the projection of \eqref{eq:hajak-0} on $(X_k,Y_k)$. From the definition of ranks, we have 
\begin{align*}
R_i \wedge R_{N_1(i)} &= \sum_{j=1}^n \ind(Y_j \le Y_i \wedge Y_{N_1(i)})\\ 
&= \ind(Y_k \le Y_i \wedge Y_{N_1(i)}) + \sum_{j=1,j \neq k}^n \ind(Y_j \le Y_i \wedge Y_{N_1(i)}). 
\end{align*}
Then $\xi_n$, of the form $n^{-2} \sum_{i=1}^n [R_i \wedge R_{N_1(i)}]$, can be decomposed as the summation of the following two terms:
\begin{align}\label{eq:hajak-2}
n^{-2} \sum_{i=1}^n \sum_{j=1,j \neq k}^n \ind(Y_j \le Y_i \wedge Y_{N_1(i)}) ~~~{\rm and}~~~n^{-2} \sum_{i=1}^n \ind(Y_k \le Y_i \wedge Y_{N_1(i)}).
\end{align}
For the first term, since $j \neq k$, $(X_j, Y_j)$ is independent of $(X_k,Y_k)$ and hence 
\[
\E \Big[n^{-2} \sum_{i=1}^n \sum_{j=1,j \neq k}^n \ind(Y_j \le Y_i \wedge Y_{N_1(i)}) \given X_k,Y_k\Big] \approx \E \Big[ n^{-1} \sum_{i=1}^n F_Y(Y_i \wedge Y_{N_1(i)}) \given X_k,Y_k \Big],
\]
which corresponds exactly to the ``natural component of the H\'ajek representation''  \eqref{eq:hajak-1} when projected to $(X_k,Y_k)$.

What about the second term in \eqref{eq:hajak-2}? Notice that when the sample size is sufficiently large, the NN distance is small, and hence for any $k\ne 1$,
\begin{align*}
\E \Big[n^{-2} \sum_{i=1}^n \ind(Y_k \le Y_i \wedge Y_{N_1(i)}) \given X_k,Y_k\Big] &\approx n^{-1} \E\Big[\ind(Y_k \le Y_1 \wedge Y_{N_1(1)}) \given X_k,Y_k\Big]\\
& \approx n^{-1} \E\Big[\ind(Y_k \le Y_1 \wedge \tY_1) \given X_k,Y_k\Big],
\end{align*}
where $\tY_1$ is sampled independently from the conditional distribution of $Y$ given $X_1$. By the definition of the function $h(\cdot)$ in \eqref{eq:h}, 
\[
\E[\ind(Y_k \le Y_1 \wedge \tY_1) \given X_k,Y_k] =  h(Y_k). 
\]
Then using the H\'ajek projection, the difference between 
\[
n^{-2} \sum_{i=1}^n [R_i \wedge R_{N_1(i)}] ~~ {\rm and} ~~
n^{-1} \sum_{i=1}^n [F_Y(Y_i) \wedge F_Y(Y_{N_1(i)})] 
\]
after projection into sums is $n^{-1} \sum_{k=1}^n h(Y_k)$ up to a constant. This gives rise to \eqref{eq:xin*}.

In detail, we have the following theorem.

\begin{theorem}[H\'ajek representation]\label{thm:hayek} It holds true (for any fixed continuous $F_{X,Y}$) that
  \begin{align*}
    \lim_{n \to \infty} \Big\{n\Var[\xi_n - \xi_n^*]\Big\} = 0 ~~~{\rm and}~~~     \lim_{n \to \infty} \Big\{n\Var[\overline{\xi}_n - \overline{\xi}_n^*]\Big\} = 0 ~~({\rm if}~d=1).
  \end{align*}
\end{theorem}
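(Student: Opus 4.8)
\emph{Reduction to a projection error.} Discarding the additive constant $\tfrac{2n+1}{n-1}$ (which does not affect variances), I would write $\xi_n=A_n+B_n$, where $A_n:=\tfrac{6n}{n^2-1}\sum_{i=1}^n\min\{F_Y(Y_i),F_Y(Y_{N(i)})\}$ is the natural component obtained by replacing the empirical ranks with the population $F_Y$, and $B_n:=\xi_n-A_n$ collects the empirical-process fluctuation. Since the first sum in $\xi_n^*$ is exactly $A_n$, we have $\xi_n-\xi_n^*=B_n-\tfrac{6n}{n^2-1}\sum_{i=1}^n h(Y_i)$. I would then invoke the H\'ajek projection lemma \citep[Lemma 11.10]{MR1652247}: the projection $\hat B_n:=\sum_{k=1}^n\{\E[B_n\given X_k,Y_k]-\E[B_n]\}$ is mean-preserving and obeys the orthogonality identity $\Var[B_n-\hat B_n]=\Var[B_n]-\Var[\hat B_n]$. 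Writing $\xi_n-\xi_n^*=(B_n-\hat B_n)+(\hat B_n-\tfrac{6n}{n^2-1}\sum_i h(Y_i))$ and using the $L^2$ triangle inequality, it suffices to prove the two bounds $n\Var[B_n-\hat B_n]\to0$ and $n\Var[\hat B_n-\tfrac{6n}{n^2-1}\sum_i h(Y_i)]\to0$.

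\emph{Identifying the projection.} Using $R_i\wedge R_{N(i)}=\sum_j\ind(Y_j\le Y_i\wedge Y_{N(i)})$ and the independence of $\{(X_j,Y_j):j\ne k\}$ from $(X_k,Y_k)$, I would compute $\E[B_n\given X_k,Y_k]$ by splitting the double sum over $(i,j)$ according to the role of index $k$. The generic terms (with $j=k$ but $i,N(i)\ne k$) produce, after summing over the $\approx n$ choices of $i$, the quantity $\tfrac{6n}{n^2-1}\,\P(Y_k\le Y_1\wedge Y_{N(1)}\given Y_k)$. The key localization step is that as $n\to\infty$ the nearest neighbor $X_{N(1)}$ converges to $X_1$, so $(Y_1,Y_{N(1)})$ behaves like two independent draws from the conditional law $G_{X_1}$; hence $\P(Y_k\le Y_1\wedge Y_{N(1)}\given Y_k)\to\E[G_{X_1}^2(Y_k)\given Y_k]=h(Y_k)$. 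The remaining boundary terms ($i=k$ or $N(i)=k$) occur only $O(1)$ times by the bounded in-degree of the NN graph and are of smaller order after the $6/(n^2-1)$ scaling. Quantifying the localization error (from $X_{N(1)}\ne X_1$ and the shrinking NN distance) and these boundary contributions yields the second bound.

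\emph{The main estimate.} The heart of the matter is $n\Var[B_n-\hat B_n]\to0$. By orthogonality this equals $n(\Var[B_n]-\Var[\hat B_n])$, i.e.\ $n$ times the total variance carried by the second- and higher-order interaction components of $B_n$. I would control it through a mixed second-difference (second-order Efron--Stein) inequality of the form $\Var[B_n]-\Var[\hat B_n]\le C\sum_{k<l}\E[(\Delta_{kl}B_n)^2]$, where $\Delta_{kl}B_n$ is the mixed second difference of $B_n$ under independently resampling observations $k$ and $l$. One then estimates $\E[(\Delta_{kl}B_n)^2]$ using two geometric facts about nearest neighbors: (a) every point is the nearest neighbor of at most a dimension-dependent constant number of points, so resampling one observation perturbs only $O(1)$ edges of the NN graph and only $O(1)$ of the summands $R_i\wedge R_{N(i)}$; and (b) the cross second difference is non-negligible only when $k$ and $l$ are geometrically linked (one a near nearest neighbor of the other, or both entering a common comparison), a sparse set of pairs. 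Tracking the $6/(n^2-1)$ prefactor and summing the $\binom n2$ terms then gives a bound of order $o(n^{-1})$, as required.

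\emph{Main obstacle and the one-dimensional case.} The central difficulty is that the map $N(\cdot)$ depends on the \emph{entire} $X$-sample, so the summands are not those of a classical $U$-statistic and the conditional-independence heuristics of the proof sketch must be justified. Rigorously controlling the second-order interaction terms --- proving that jointly resampling two points has a genuinely negligible effect outside a sparse, geometrically local set of pairs --- is where the nearest-neighbor stability lemmas do the heavy lifting, and is the step I expect to be most delicate; its completed form is precisely the covariance expansion recorded in $\hat\sigma^2$. The coefficient $\overline\xi_n$ (when $d=1$) is handled identically but is simpler: the right-NN graph has deterministic in-degree, since each point is the right nearest neighbor of exactly its left neighbor, so the interaction bookkeeping collapses and the constants simplify, matching the cleaner form of $\hat{\overline\sigma}^2$.
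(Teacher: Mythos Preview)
Your proposal follows the heuristic of the paper's own proof sketch (Section~1.2) but departs sharply from the paper's actual execution, and one of the shortcuts you take does not hold up.

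\textbf{What the paper actually does.} The paper never works with the H\'ajek projection $\hat B_n$ directly, and it does not use a second-order Efron--Stein bound. Instead it (i) conditions on $\mX$ via the law of total variance, splitting $n\Var[\xi_n-\widecheck{\xi}_n]$ into $n\E[\Var[\cdot\mid\mX]]$ and $n\Var[\E[\cdot\mid\mX]]$ for a carefully designed intermediate statistic $\widecheck{\xi}_n$; (ii) for the first piece, expands all pairwise covariances of the rank, CDF, U-statistic, and $g(Y_i)$ terms (the $T_i,T_i^*,T_i'$ arrays of Lemmas~2.1--2.5) and shows they cancel via the algebraic identity of Lemma~2.6, namely $a_1+4a_2+4a_3=b_3-2b_1+2b_2=\E[\Var[g(Y_1)\mid X_1]]$; (iii) for the second piece, applies a \emph{first}-order Efron--Stein inequality to the $\mX$-measurable quantity $\E[\xi_n-\widecheck{\xi}_n\mid\mX]$ (Lemmas~2.7--2.9); and (iv) passes from $\widecheck{\xi}_n$ to $\xi_n^*$ by a standard U-statistic projection plus the elementary identities $g(t)-\tilde h(t)=h(t)-2/3$ and $\tilde h_0(x)-\E[g(Y)\mid X=x]+h_0(x)=2/3$. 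The conditioning on $\mX$ is what makes the covariance bookkeeping tractable: once the NN graph is frozen, everything is a sum of conditionally independent pieces indexed by the graph.

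\textbf{Where your sketch is loose.} Two places. First, the inequality $\Var[B_n]-\Var[\hat B_n]\le C\sum_{k<l}\E[(\Delta_{kl}B_n)^2]$ is not a standard citation; you would need either a precise reference or a proof that the higher-order Hoeffding components are dominated by the order-two differences. Second, and more substantively, your claim that for generic pairs $(k,l)$ ``the cross second difference is non-negligible only when $k$ and $l$ are geometrically linked'' is false as stated. Take $i=k$, $j=l$: the summand $\ind(Y_l\le W_k)-F_Y(W_k)$ depends on both $Y_l$ and (through $W_k$) on $(X_k,Y_k)$, so $\Delta_{kl}$ of it is generically nonzero even when $k,l$ are far apart in the NN graph. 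What saves you is not sparsity but \emph{size}: each such term is $O(1)$ and carries the prefactor $6/(n^2-1)$, so $\Delta_{kl}B_n=O(n^{-2})$ for generic pairs, giving $\sum_{k<l}\E[(\Delta_{kl}B_n)^2]=O(n^{-2})$. That is the correct accounting, and it does yield $o(n^{-1})$, but your stated mechanism (sparsity of interacting pairs) is wrong. Similarly, your dismissal of the boundary terms ($i=k$ or $N(i)=k$) as ``smaller order'' is right in conclusion but for a different reason than you give: each such term equals $\frac{6}{n^2-1}\cdot n[F_Y^{(n)}(W_i)-F_Y(W_i)]$, which is $O_P(n^{-3/2})$ in $L^2$ by the empirical-process bound, not merely because there are $O(1)$ of them.

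In short: your route is plausible and genuinely different---more structural, less computational---but the two mechanisms you name for the key cancellations are not the ones that actually work, and the second-order Efron--Stein step needs a precise formulation. The paper's route trades this for a long explicit covariance expansion whose cancellation is pinned down by the single identity of Lemma~2.6.
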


Using Theorem~\ref{thm:hayek}, as long as $n\liminf_{n \to \infty} \Var[\xi_n] > 0$, normalized $\xi_n$ ($\overline\xi_n$) and $\xi_n^*$ ($\overline\xi_n^*$) share the same asymptotic distribution and it suffices to establish the CLT for $\xi_n^*$ ($\overline\xi_n^*$). In the second step, we establish the CLT of $\xi_n^*$ and $\overline\xi_n^*$ by noticing that it merely consists of a linear sum of nearest neighbor statistics. Leveraging the normal approximation theorem under local dependence \citep{MR2435859}, one can then reach the following two CLTs. 

\begin{theorem}\label{thm:clt}
  As long as $Y$ is not a measurable function of $X$ almost surely, it holds true (for any fixed continuous $F_{X,Y}$) that
  \begin{align}\label{eq:new-new-1}
    \big(\xi_n^* - \E[\xi_n^*]\big)/\sqrt{\Var[\xi_n^*]} \longrightarrow N(0,1) ~~ {\rm in~distribution},
  \end{align}
  and
  \begin{align*}
    \big(\overline{\xi}_n^* - \E[\overline{\xi}_n^*]\big)/\sqrt{\Var[\overline{\xi}_n^*]} \longrightarrow N(0,1) ~~ {\rm in~distribution}.
  \end{align*}
\end{theorem}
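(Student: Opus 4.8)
The plan is to recognize each of $\xi_n^*$ and $\overline\xi_n^*$ as a centered, self-normalized sum of nearest-neighbor statistics and to invoke the normal approximation theorem under local dependence \citep{MR2435859}. I treat $\xi_n^*$ in detail; the argument for $\overline\xi_n^*$ is identical upon replacing $N(\cdot)$ by the right nearest neighbor $\overline N(\cdot)$, and is if anything simpler because $d=1$. First I would write
\[
\xi_n^* - \E[\xi_n^*] = \frac{6n}{n^2-1}\sum_{i=1}^n\big(\zeta_i - \E[\zeta_i]\big),\qquad \zeta_i := \min\big\{F_Y(Y_i),F_Y(Y_{N(i)})\big\} + h(Y_i),
\]
and, since the deterministic factor cancels upon self-normalization, reduce the claim \eqref{eq:new-new-1} to the CLT $W_n/\sqrt{\Var[W_n]} \longrightarrow N(0,1)$ for $W_n := \sum_{i=1}^n(\zeta_i - \E[\zeta_i])$.

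Second, I would expose the local-dependence structure. Rewriting the nearest-neighbor part as a sum over ordered pairs,
\[
\sum_{i=1}^n\min\big\{F_Y(Y_i),F_Y(Y_{N(i)})\big\} = \sum_{i=1}^n\sum_{j\neq i}\ind\big(N(i)=j\big)\min\big\{F_Y(Y_i),F_Y(Y_j)\big\},
\]
each summand $\zeta_i$ becomes a functional of $(X_i,Y_i)$, of its nearest neighbor, and of the points that rule out closer candidates. I would then build the dependency neighborhoods $A_i\subseteq B_i$ required by \citep{MR2435859}, letting $A_i$ collect every index $j$ whose summand $\zeta_j$ can be affected by $(X_i,Y_i)$ (those with $i=N(j)$, $j=N(i)$, or sharing geometric proximity). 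The decisive geometric input is the classical bounded-degree property of Euclidean nearest-neighbor graphs: almost surely a point is the nearest neighbor of at most a constant $\gamma_d$ other points (a cone-covering argument), so $|A_i|$ and $|B_i|$ are bounded uniformly by a constant depending only on $d$.

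Third, I would verify the quantitative hypotheses. Because $F_Y(\cdot)\in[0,1]$ and $h(\cdot)\in[0,1]$, each $\zeta_i$ lies in $[0,2]$, so every moment entering the Berry--Esseen bound of \citep{MR2435859} is uniformly bounded; the bound then collapses to a negative power of $\Var[W_n]$, and an estimate of order $n/(\Var[W_n])^{3/2}$ tends to $0$ as soon as $\Var[W_n]\gtrsim n$. It therefore remains to establish non-degeneracy, $\liminf_{n\to\infty} n\Var[\xi_n^*]>0$. By Theorem \ref{thm:hayek}, $n\Var[\xi_n-\xi_n^*]\to 0$, so this is equivalent to $\liminf_{n\to\infty} n\Var[\xi_n]>0$, which is exactly Proposition \ref{prop:variance}\ref{prop:variance,lower} and is where the hypothesis that $Y$ is not a measurable function of $X$ enters. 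Plugging the bounded moments and this variance lower bound into the local-dependence normal approximation yields $\sup_z\lvert \P(W_n/\sqrt{\Var[W_n]}\le z)-\Phi(z)\rvert\to 0$, which is \eqref{eq:new-new-1}; the same steps with $\overline N$ give the statement for $\overline\xi_n^*$.

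The step I expect to be the main obstacle is the local-dependence analysis of the second paragraph. Unlike a sum of finitely dependent variables, the index $N(i)$ is a function of the \emph{entire} point cloud $\{X_j\}_{j=1}^n$, so $\zeta_i$ is not literally a function of boundedly many coordinates and the exact independence relations $\zeta_i\indep\{\zeta_j:j\notin A_i\}$ need not hold verbatim. Making the neighborhood construction rigorous thus rests on the geometric stabilization of nearest-neighbor graphs---bounded degree together with the fact that $N(i)$ is determined, with overwhelming probability, by points in a small ball around $X_i$---and on translating this stabilization into the precise influence bounds demanded by \citep{MR2435859}. The non-degeneracy invoked in the third paragraph is also genuinely nontrivial, but it is quarantined in Proposition \ref{prop:variance} and may simply be cited here.
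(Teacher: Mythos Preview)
Your approach is essentially that of the paper: rewrite $\xi_n^*$ as a self-normalized sum of bounded nearest-neighbor functionals, invoke Chatterjee's normal approximation \citep[Theorem~3.4]{MR2435859}, and supply the variance lower bound via Proposition~\ref{prop:variance}\ref{prop:variance,lower} combined with Theorem~\ref{thm:hayek}. The worry you raise in your final paragraph is in fact absorbed by citing the \emph{specific} nearest-neighbor CLT (Theorem~3.4) rather than the generic local-dependence bound in the same paper; that result is tailored to statistics of the form $\sum_i f(Z_i,Z_{N(i)})$ and already builds in the stabilization you are concerned about, so you need not construct $A_i\subseteq B_i$ by hand.

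One point deserves correction: you say the $\overline\xi_n^*$ case is ``identical'' and ``if anything simpler.'' The paper disagrees. Chatterjee's Theorem~3.4 is stated for the ordinary (undirected) nearest-neighbor graph, and the right nearest neighbor $\overline N(\cdot)$ does not fit that framework verbatim. The paper handles this by identifying an interaction rule as in Step~III of the proof of \citet[Theorem~3.2]{lin2021boosting} (with a single right nearest neighbor), and then applying the normal approximation machinery to that interaction structure. So the $\overline\xi_n^*$ case requires a small additional argument rather than being a special case of the $\xi_n^*$ proof.
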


\begin{remark}\label{remark:sobol}
Of note, in conducting global sensitivity analysis via the first-order Sobol indices, \citet[Theorem 4.1]{gamboa2022global} obtained a CLT similar to \eqref{eq:new-new-1} above. \nb{In another related work, \citet{devroye2018nearest} introduced and analyzed a nearest neighbor statistic for estimating the residual variance in nonparametric regression, and also established its central limit theorem.  All these results,} however, do not have to handle the randomness from ranking $Y_i$'s that we addressed in Theorem \ref{thm:hayek} and is to us the most difficult part.
\end{remark}

Finally, Theorem~\ref{thm:main} is proved by combining Theorems~\ref{thm:hayek} and \ref{thm:clt}.

\begin{remark}\label{eq:hajek}
The H\'ajek representation of $\xi_n$ under independence between $X$ and $Y$ was established in, e.g.,  \citet[Lemma D.1]{deb2020kernel}, \citet[Equ. (4.9)]{cao2020correlations}, \citet[Lemma 7.1]{shi2021ac}, and \citet[Remark 3.2]{lin2021boosting}. See also \citet[Theorem 2.1]{auddy2021exact}. The remaining component there is a U-statistic of the form 
\begin{align}\label{eq:hajak-ind}
-\frac{1}{n(n-1)} \sum_{i \neq j} F_Y(Y_i \wedge Y_j). 
\end{align}
Using standard U-statistic theory \citep[Theorem 12.3]{MR1652247}, the main term of \eqref{eq:hajak-ind} is 
\begin{align}\label{eq:hajak-3}
-n^{-1} \sum_{i=1}^n \Big(2F_Y(Y_i) - F_Y^2(Y_i) - \frac{1}{3} \Big). 
\end{align}
Noticing that $\E[G_X(\cdot)] = 1-F_Y(\cdot)$, we have
\[
h(\cdot) =  \Var [G_X^2(\cdot)] + (\E[G_X(\cdot)])^2 = \Var [G_X^2(\cdot)] - (2F_Y(\cdot) - F_Y^2(\cdot)) + 1. 
\]
Under the null, one is then ready to check $\Var [G_X^2(\cdot)] = 0$, and thus $h(\cdot)$ reduces to \eqref{eq:hajak-3} (up to some constants).
\end{remark}

%\fbox{stop point}

\section{Proof of the main results}\label{sec:main-proof}

\paragraph*{Notation.}
For any integers $n,d\ge 1$, let $\zahl{n}:= \{1,2,\ldots,n\}$, and $\bR^d$ be the $d$-dimensional real space. A set consisting of distinct elements $x_1,\dots,x_n$ is written as either $\{x_1,\dots,x_n\}$ or $\{x_i\}_{i=1}^{n}$, and its cardinality is written by $\lvert \{x_i\}_{i=1}^n \rvert$. The corresponding sequence is denoted by $[x_1,\dots,x_n]$ or $[x_i]_{i=1}^{n}$. %Let $\ind(\cdot)$ denote the indicator function.
For any two real sequences $\{a_n\}$ and $\{b_n\}$, write $a_n \lesssim b_n$ (or equivalently, $b_n \gtrsim a_n$) if there exists a universal constant $C>0$ such that $a_n/b_n \le C$ for all sufficiently large $n$, and write $a_n \prec b_n$ (or equivalently, $b_n \succ a_n$) if $a_n/b_n \to 0$ as $n$ goes to infinity. Write $a_n = O(b_n)$ if $\lvert a_n \rvert \lesssim b_n$ and $a_n = o(b_n)$ if $\lvert a_n \rvert \prec b_n$. We shorthand $(X_1,\ldots,X_n)$ by $\mX$. We use  $\stackrel{\sf d}{\longrightarrow}$ and $\stackrel{\sf p}{\longrightarrow}$ to denote convergences in distribution and in probability, respectively.

% \fbox{double-check if all notations were used.}

% \fbox{double-check if all notations were introduced; e.g., $\stackrel{d}{\to}$}

%\subsection{Proof of Theorem~\ref{thm:main}}

\begin{proof}[Proof of Theorem~\ref{thm:main}]
From Proposition~\ref{prop:variance} and Theorem~\ref{thm:hayek},
\begin{align*}
  \limsup_{n \to \infty} \E\Big[\frac{\xi_n^* - \E[\xi_n^*]}{\sqrt{\Var[\xi_n]}} - \frac{\xi_n - \E[\xi_n]}{\sqrt{\Var[\xi_n]}}\Big]^2 &= \limsup_{n \to \infty} \frac{\Var[\xi_n - \xi_n^*]}{\Var[\xi_n]} \\
  &\le \frac{\limsup_{n \to \infty} n\Var[\xi_n - \xi_n^*]}{\liminf_{n \to \infty} n \Var[\xi_n]} = 0,
\end{align*}
and
\begin{align*}
  \limsup_{n \to \infty} \Big\lvert \frac{\Cov[\xi_n,\xi_n - \xi_n^*]}{\Var[\xi_n]} \Big\rvert &\le \limsup_{n \to \infty} \Big(\frac{\Var[\xi_n - \xi_n^*]}{\Var[\xi_n]}\Big)^{\frac{1}{2}} \\
  &\le \Big(\frac{\limsup_{n \to \infty} n\Var[\xi_n - \xi_n^*]}{\liminf_{n \to \infty} n\Var[\xi_n]}\Big)^{\frac{1}{2}} = 0.
\end{align*}
One can then deduce
\[
  \frac{\xi_n^* - \E[\xi_n^*]}{\sqrt{\Var[\xi_n]}} - \frac{\xi_n - \E[\xi_n]}{\sqrt{\Var[\xi_n]}} \stackrel{\sf p}{\longrightarrow} 0~~~{\rm and}~~~\Var[\xi_n^*]/\Var[\xi_n] \longrightarrow 1.
\]
We then complete the proof for $\xi_n$ by using Theorem~\ref{thm:clt}. The proof for $\overline{\xi}_n$ can be established in the same way.
\end{proof}

For better readability, we defer the proof of Theorem~\ref{thm:var} to the end of this section.

%\subsection{Proof of Theorem~\ref{thm:hayek}}

\begin{proof}[Proof of Theorem~\ref{thm:hayek}]

We first introduce some necessary notation for the proof.

For any $t \in \bR$, recall $G_X(t)= \P\big(Y \ge t \given X\big)$ and define
\begin{align}
  G(t):= \P\big(Y \ge t\big) = 1-F_Y(t), ~~~g(t):= \Var\Big[G_X(t)\Big] = \E \Big[G_X^2(t)\Big] - G^2(t).
\end{align}
For any $x \in \bR^d$, define
\begin{align}
  h_0(x): = \E[h(Y) \given X=x] = \int \E[G_X^2(t)] \d F_{Y \given X=x}(t),
\end{align}
where $F_{Y \given X=x}$ is the conditional distribution of $Y$ conditional on $X=x$.

We then introduce an intermediate statistic $\widecheck{\xi}_n$ as follows,
\begin{align*}
  \widecheck{\xi}_n := & \frac{6n}{n^2-1} \Big(\sum_{i=1}^n \min\big\{F_Y(Y_i), F_Y(Y_{N_1(i)})\big\} - \frac{1}{n-1} \sum_{\substack{i,j=1\\i \neq j}}^n \min\big\{F_Y(Y_i), F_Y(Y_j)\big\} \yestag\label{eq:checkxin}\\
  &+ \sum_{i=1}^n g(Y_i) + \frac{1}{n-1} \sum_{\substack{i,j=1\\i \neq j}}^n\E \Big[ \min\big\{F_Y(Y_i), F_Y(Y_j)\big\} \Biggiven X_i,X_j \Big] \\
  &- \sum_{i=1}^n \E \Big[ g(Y_i) \Biggiven X_i \Big] +  \sum_{i=1}^n h_0(X_i) \Big).
\end{align*}

Notice that
\begin{align*}
  \Var[\xi_n - \xi_n^*] =& \Var[\xi_n - \widecheck{\xi}_n] + \Var[\widecheck{\xi}_n - \xi_n^*] + 2\Cov[\xi_n - \widecheck{\xi}_n, \widecheck{\xi}_n - \xi_n^*]\\
  \le & \Var[\xi_n - \widecheck{\xi}_n] + \Var[\widecheck{\xi}_n - \xi_n^*] + 2(\Var[\xi_n - \widecheck{\xi}_n]\Var[\widecheck{\xi}_n - \xi_n^*])^{1/2}.
\end{align*}
As long as
\begin{align}\label{eq:han-1}
  \lim_{n \to \infty} n\Var[\xi_n -\widecheck{\xi}_n] = 0 ~~{\rm and}~~ \lim_{n \to \infty} n\Var[\widecheck{\xi}_n - \xi_n^*] = 0,
\end{align}
the proof for $\xi_n$ is complete. The proof for $\overline{\xi}_n$ is similar and accordingly omitted.

For the first equation in \eqref{eq:han-1}, by the law of total variance, one can decompose $\Var[\xi_n - \widecheck{\xi}_n]$ as follows,
\[
  n \Var[\xi_n - \widecheck{\xi}_n] = n \E [\Var[\xi_n - \widecheck{\xi}_n \given \mX]] + n \Var [\E[\xi_n - \widecheck{\xi}_n \given \mX]].
\]

%Then the proof for $\xi_n$ is divided into three steps.

\vspace{0.5cm}

{\bf Step I.} $\lim_{n \to \infty} n\E [\Var[\xi_n - \widecheck{\xi}_n \given \mX]] = 0$.

We decompose $n\E [\Var[\xi_n - \widecheck{\xi}_n \given \mX]]$ as:
\begin{align}\label{eq:hayekmain1}
  n \E [\Var[\xi_n - \widecheck{\xi}_n \given \mX]] = n \E [\Var[\xi_n \given \mX]] + n \E [\Var[\widecheck{\xi}_n \given \mX]] - 2 n \E [\Cov[\xi_n , \widecheck{\xi}_n \given \mX]].
\end{align}
For the first term in \eqref{eq:hayekmain1}, using \eqref{eq:xin}, we have
\begin{align*}
   &n \Var[\xi_n \given \mX] \yestag\label{eq:hayekmain11}\\
 =& \frac{36n}{(n^2-1)^2} \Var \Big[  \sum_{i=1}^n \min\big\{R_i, R_{N_1(i)}\big\} \Biggiven \mX \Big]  \\
  = & \frac{36n^4}{(n^2-1)^2} \Big\{ \frac{1}{n^3} \sum_{i=1}^n \Var \Big[ \min\big\{R_i, R_{N_1(i)}\big\} \Biggiven \mX \Big] \\
  & + \frac{1}{n^3} \sum_{\substack{j=N_1(i),i \neq N_1(j)\\{\rm or}~i=N_1(j),j \neq N_1(i)}} \Cov\Big[\min\big\{R_i, R_{N_1(i)}\big\}, \min\big\{R_j, R_{N_1(j)}\big\} \Biggiven \mX \Big] \\
  & + \frac{1}{n^3} \sum_{\substack{i \neq j \\ N_1(i) = N_1(j)}} \Cov\Big[\min\big\{R_i, R_{N_1(i)}\big\}, \min\big\{R_j, R_{N_1(j)}\big\} \Biggiven \mX \Big] \\
  & + \frac{1}{n^3} \sum_{j = N_1(i), i = N_1(j)} \Cov\Big[\min\big\{R_i, R_{N_1(i)}\big\}, \min\big\{R_j, R_{N_1(j)}\big\} \Biggiven \mX \Big]\\
  & + \frac{1}{n^3} \sum_{i,j,N_1(i),N_1(j)~{\rm distinct}} \Cov\Big[\min\big\{R_i, R_{N_1(i)}\big\}, \min\big\{R_j, R_{N_1(j)}\big\} \Biggiven \mX \Big] \Big\}\\
  =: & \frac{36n^4}{(n^2-1)^2} \Big(T_1 + T_2 + T_3 + T_4 + T_5\Big).
\end{align*}

For the second term in \eqref{eq:hayekmain1}, noticing that the last three terms in \eqref{eq:checkxin} are constants conditional on $\mX$, we have
\begin{align*}
  & n \Var[\widecheck{\xi}_n \given \mX]   \yestag\label{eq:hayekmain12}\\
  = & \frac{36n^3}{(n^2-1)^2} \Var \Big[  \sum_{i=1}^n \min\big\{F_Y(Y_i), F_Y(Y_{N_1(i)})\big\} - \frac{1}{n-1} \sum_{\substack{i,j=1\\i \neq j}}^n \min\big\{F_Y(Y_i), F_Y(Y_j)\big\} \\
  &\quad\quad+ \sum_{i=1}^n g(Y_i) \Biggiven \mX \Big]\\
  = & \frac{36n^4}{(n^2-1)^2} \Big\{ \frac{1}{n} \sum_{i=1}^n \Var \Big[ \min\big\{F_Y(Y_i), F_Y(Y_{N_1(i)})\big\} \Biggiven \mX \Big] \\
  & + \frac{1}{n} \sum_{\substack{j=N_1(i),i \neq N_1(j)\\{\rm or}~i=N_1(j),j \neq N_1(i)}} \Cov\Big[\min\big\{F_Y(Y_i), F_Y(Y_{N_1(i)})\big\}, \min\big\{F_Y(Y_j), F_Y(Y_{N_1(j)})\big\} \Biggiven \mX \Big] \\
  & + \frac{1}{n} \sum_{\substack{i \neq j \\ N_1(i) = N_1(j)}} \Cov\Big[\min\big\{F_Y(Y_i), F_Y(Y_{N_1(i)})\big\}, \min\big\{F_Y(Y_j), F_Y(Y_{N_1(j)})\big\} \Biggiven \mX \Big]\\
  & + \frac{1}{n} \sum_{j = N_1(i), i = N_1(j)} \Cov\Big[\min\big\{F_Y(Y_i), F_Y(Y_{N_1(i)})\big\}, \min\big\{F_Y(Y_j), F_Y(Y_{N_1(j)})\big\}\Biggiven \mX \Big]\\
  & + \frac{1}{n} \sum_{i,j,N_1(i),N_1(j)~{\rm distinct}} \Cov\Big[\min\big\{F_Y(Y_i), F_Y(Y_{N_1(i)})\big\}, \min\big\{F_Y(Y_j), F_Y(Y_{N_1(j)})\big\} \Biggiven \mX \Big] \\
  & - 2 \frac{1}{n(n-1)} \sum_{i=1}^n \Cov\Big[ \min\big\{F_Y(Y_i), F_Y(Y_{N_1(i)})\big\}, \sum_{\substack{i,j=1\\i \neq j}}^n \min\big\{F_Y(Y_i), F_Y(Y_j)\big\} \Biggiven \mX \Big]\\
  & + \frac{1}{n(n-1)^2} \Var\Big[\sum_{\substack{i,j=1\\i \neq j}}^n \min\big\{F_Y(Y_i), F_Y(Y_j)\big\} \Biggiven \mX \Big]\\
  & + 2 \frac{1}{n} \sum_{i=1}^n \Cov\Big[ \min\big\{F_Y(Y_i), F_Y(Y_{N_1(i)})\big\}, \sum_{i=1}^n g(Y_i) \Biggiven \mX \Big]\\
  & - 2 \frac{1}{n(n-1)} \Cov\Big[ \sum_{\substack{i,j=1\\i \neq j}}^n \min\big\{F_Y(Y_i), F_Y(Y_j)\big\} , \sum_{i=1}^n g(Y_i) \Biggiven \mX \Big] \\
  & + \frac{1}{n} \Var\Big[\sum_{i=1}^n g(Y_i) \Biggiven \mX \Big]\Big\}\\
  =: & \frac{36n^4}{(n^2-1)^2} \Big(T_1^* + T_2^* + T_3^* + T_4^* + T_5^* - 2 T_6^* + T_7^* + 2T_8^* - 2T_9^* + T_{10}^*\Big).
\end{align*}

For the third term in \eqref{eq:hayekmain1}, from \eqref{eq:xin} and \eqref{eq:checkxin}, we have
\begin{align*}
  & n \Cov[\xi_n , \widecheck{\xi}_n \given \mX] \yestag\label{eq:hayekmain13}\\
  =& \frac{36n^2}{(n^2-1)^2} \Cov \Big[ \sum_{i=1}^n \min\big\{R_i, R_{N_1(i)}\big\}, \sum_{i=1}^n \min\big\{F_Y(Y_i), F_Y(Y_{N_1(i)})\big\} \\
  & - \frac{1}{n-1} \sum_{\substack{i,j=1\\i \neq j}}^n \min\big\{F_Y(Y_i), F_Y(Y_j)\big\} + \sum_{i=1}^n g(Y_i) \Biggiven \mX \Big]\\
  = & \frac{36n^4}{(n^2-1)^2} \Big\{ \frac{1}{n^2} \sum_{i=1}^n \Cov \Big[ \min\big\{R_i, R_{N_1(i)}\big\}, \min\big\{F_Y(Y_i), F_Y(Y_{N_1(i)})\big\} \Biggiven \mX \Big] \\
  & + \frac{1}{n^2} \sum_{\substack{j=N_1(i),i \neq N_1(j)\\{\rm or}~i=N_1(j),j \neq N_1(i)}} \Cov\Big[ \min\big\{R_i, R_{N_1(i)}\big\}, \min\big\{F_Y(Y_j), F_Y(Y_{N_1(j)})\big\} \Biggiven \mX \Big] \\
  & + \frac{1}{n^2} \sum_{\substack{i \neq j \\ N_1(i) = N_1(j)}} \Cov\Big[ \min\big\{R_i, R_{N_1(i)}\big\}, \min\big\{F_Y(Y_j), F_Y(Y_{N_1(j)})\big\} \Biggiven \mX \Big]\\
  & + \frac{1}{n^2} \sum_{j = N_1(i), i = N_1(j)} \Cov\Big[\min\big\{R_i, R_{N_1(i)}\big\}, \min\big\{F_Y(Y_j), F_Y(Y_{N_1(j)})\big\}\Biggiven \mX \Big]\\
  & + \frac{1}{n^2} \sum_{i,j,N_1(i),N_1(j)~{\rm distinct}} \Cov\Big[ \min\big\{R_i, R_{N_1(i)}\big\}, \min\big\{F_Y(Y_j), F_Y(Y_{N_1(j)})\big\} \Biggiven \mX \Big] \\
  & - \frac{1}{n^2(n-1)} \sum_{i=1}^n \Cov\Big[ \min\big\{R_i, R_{N_1(i)}\big\}, \sum_{\substack{i,j=1\\i \neq j}}^n \min\big\{F_Y(Y_i), F_Y(Y_j)\big\} \Biggiven \mX \Big]\\
  & + \frac{1}{n^2} \sum_{i=1}^n \Cov\Big[ \min\big\{R_i, R_{N_1(i)}\big\}, \sum_{i=1}^n g(Y_i) \Biggiven \mX \Big] \Big\}\\
  =: & \frac{36n^4}{(n^2-1)^2} \Big(T_1' + T_2' + T_3' + T_4' + T_5' - T_6' + T_7'\Big).
\end{align*}

Let $Y, \tY \sim F_Y, \tY_1, \tY_1' \sim F_{Y \given X = X_1}, \tY_2 \sim F_{Y \given X = X_2}$ be mutually independently drawn. We then establish the following five lemmas that control the terms of \eqref{eq:hayekmain11}-\eqref{eq:hayekmain13}.

\begin{lemma}\label{lemma:hayek1}
  For $i=1,2,3,4$,
  \[
    \lim_{n \to \infty} \Big\lvert \E[T_i] - \E[T_i^*] \Big\rvert =0, ~~ \lim_{n \to \infty} \Big\lvert \E[T_i'] - \E[T_i^*] \Big\rvert =0,
  \]
  and
  \begin{align*}
    & \lim_{n \to \infty} \Big\lvert \E\Big[T_1\Big] - \E \Big[ \Var \Big[ F_Y\big(Y_1 \wedge \tY_1\big) \Biggiven X_1 \Big] \Big] \Big\rvert = 0,\\
    & \lim_{n \to \infty} \Big\lvert \E\Big[T_2\Big] - 2\E \Big[\Cov\Big[F_Y\big(Y_1 \wedge \tY_1\big), F_Y\big(\tY_1 \wedge \tY_1' \big) \Biggiven X_1 \Big] \ind \Big( 1 \neq N_1(N_1(1)) \Big) \Big] \Big\rvert = 0,\\
    & \lim_{n \to \infty} \Big\lvert \E\Big[T_3\Big] - \E \Big[\Cov\Big[F_Y\big(Y_1 \wedge \tY_1\big), F_Y\big(\tY_1 \wedge \tY_1' \big) \Biggiven X_1 \Big] \Big\lvert \Big\{j: j \neq 1, N_1(j) = N_1(1)\Big\} \Big\rvert \Big] \Big\rvert = 0,\\
    & \lim_{n \to \infty} \Big\lvert \E\Big[T_4\Big] - \E \Big[ \Var \Big[ F_Y\big(Y_1 \wedge \tY_1\big) \Biggiven X_1 \Big] \ind \Big( 1 = N_1(N_1(1)) \Big) \Big] \Big\rvert = 0.
  \end{align*}
\end{lemma}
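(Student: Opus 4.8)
The plan is to reduce every claim to a single discrepancy estimate comparing the rank-based and population-based minima. Write $M_i := Y_i \wedge Y_{N(i)}$, so that by monotonicity of $F_Y$ we have $\min\{F_Y(Y_i),F_Y(Y_{N(i)})\} = F_Y(M_i)$, while $\min\{R_i,R_{N(i)}\} = \sum_{j=1}^n \ind(Y_j \le M_i) = n\hat{F}_n(M_i)$, with $\hat{F}_n$ the empirical distribution of $Y_1,\dots,Y_n$. Set the discrepancy $\Delta_i := \hat{F}_n(M_i) - F_Y(M_i)$. Each of $T_1,T_1^*,T_1'$ (and likewise each triple for $i=2,3,4$) is an average over $\mX$ of conditional variances/covariances of the quantities $\hat{F}_n(M_\cdot)$ and $F_Y(M_\cdot)$; by bilinearity their pairwise differences are conditional covariances in which at least one argument is some $\Delta_\cdot$. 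Since $\hat{F}_n(M_\cdot),F_Y(M_\cdot)\in[0,1]$, Cauchy--Schwarz bounds each such difference by $\sqrt{\Var[\Delta_\cdot\mid\mX]}$ up to an absolute constant. The index sets appearing in $T_2,T_3,T_4$ (directed NN edges, shared-NN pairs, mutual-NN pairs) all have cardinality $O(n)$, because the Euclidean NN graph has bounded in-degree, i.e.\ each $X_i$ is the nearest neighbor of at most $c_d$ points; so after the differing normalizations every term stays $O(1)$. Consequently, by exchangeability, all six equivalences $\E[T_i]-\E[T_i^*]\to 0$ and $\E[T_i']-\E[T_i^*]\to 0$ follow once we show
\[
\E\big[\Var[\Delta_1\mid\mX]\big]\longrightarrow 0.
\]

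This estimate is the main obstacle, and I would prove it by the law of total variance, conditioning further on $(Y_1,Y_{N(1)})$. Given $(\mX,Y_1,Y_{N(1)})$ the threshold $M_1$ and hence $F_Y(M_1)$ are fixed, and $n\hat{F}_n(M_1) = 1 + \sum_{j\neq 1,N(1)}\ind(Y_j\le M_1)$ is a sum of conditionally independent Bernoulli variables; its conditional variance equals $n^{-2}\sum_{j\neq 1,N(1)}F_{Y\given X_j}(M_1)\big(1-F_{Y\given X_j}(M_1)\big)\le (4n)^{-1}$. The remaining piece of the total variance is, up to an $O(1/n)$ term, $\Var\big[\bar{F}_n(M_1)-F_Y(M_1)\mid\mX\big]$, where $\bar{F}_n(t):=n^{-1}\sum_{j=1}^n F_{Y\given X_j}(t)=\E[\hat{F}_n(t)\mid\mX]$ is the local average of the conditional distribution functions; this is bounded by $\E\big[(\bar{F}_n(M_1)-F_Y(M_1))^2\big]$. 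This last mean square is the crux: for each fixed $t$, $\bar{F}_n(t)-F_Y(t)=n^{-1}\sum_j\big(F_{Y\given X_j}(t)-F_Y(t)\big)$ is an average of i.i.d.\ mean-zero bounded summands with mean square $O(1/n)$, but the evaluation point $M_1$ is random and entangled, through the data-dependent index $N(1)$, with the summands. I would split the resulting double sum into its diagonal, which contributes $O(1/n)$, and its off-diagonal, and argue that for all but the $O(n)$ pairs coupled through $N(1)$ the conditional-CDF fluctuations integrate to zero against the conditional law of $M_1$, the coupled pairs contributing only lower order. This random-threshold, NN-coupled bound is where the real work lies.

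With the equivalences in hand, it remains to evaluate $\lim\E[T_i^*]$, read off directly from the population-based form via nearest-neighbor consistency. Since $\lVert X_{N(1)}-X_1\rVert\to 0$ almost surely and $x\mapsto F_{Y\given X=x}$ is continuous (in the $L^1$/weak sense) at almost every $x$, one may, conditionally on $\mX$, replace $Y_{N(1)}$ by an independent draw $\tY_1\sim F_{Y\given X_1}$ and the response at the relevant second neighbor by a further independent $\tY_1'\sim F_{Y\given X_1}$; dominated convergence applies throughout since all integrands lie in $[0,1]$. For $T_1^*$ this gives $\E[T_1]\to\E\big[\Var[F_Y(Y_1\wedge\tY_1)\mid X_1]\big]$. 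For $T_4^*$, the mutual-NN condition $1=N(N(1))$ forces $\min\{R_1,R_{N(1)}\}$ and $\min\{R_{N(1)},R_{N(N(1))}\}$ to coincide, so the covariance becomes a variance, yielding the stated expression weighted by $\ind(1=N(N(1)))$. For $T_2^*$, the two symmetric sub-cases produce the factor $2$, and $Y_1,Y_{N(1)},Y_{N(N(1))}$ converge jointly to $Y_1,\tY_1,\tY_1'$, giving $\Cov[F_Y(Y_1\wedge\tY_1),F_Y(\tY_1\wedge\tY_1')\mid X_1]$ weighted by $\ind(1\neq N(N(1)))$. For $T_3^*$, summing over the $\lvert\{j\neq 1:N(j)=N(1)\}\rvert$ points sharing the neighbor $N(1)$, each contributes the same covariance, producing the count-weighted limit. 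Combining these limits with the equivalences of the first paragraph yields the four displayed identities for $\E[T_1],\dots,\E[T_4]$. The decisive difficulty throughout is the mean-square bound of the second paragraph; the replacement step, while requiring care with the continuity of the conditional laws, is otherwise standard.
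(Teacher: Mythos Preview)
Your approach is correct but differs meaningfully from the paper's. The paper never isolates the discrepancy $\Delta_i=\hat F_n(M_i)-F_Y(M_i)$; instead, for each of $T_i$, $T_i'$, $T_i^*$ separately it expands $R_i\wedge R_{N(i)}=\sum_k\ind(Y_k\le M_i)$, extracts the dominant cross term (e.g.\ $\Cov[\ind(Y_2\le M_1),\ind(Y_3\le M_1)\mid\mX]$ for $T_1$), and then replaces $Y_{N(1)}$ (and, for $i=2,3,4$, also $Y_{N(N(1))}$ or the shared-NN response) by independent draws from $F_{Y\mid X_1}$ using the pointwise consistency $G_{X_{N(1)}}(t)-G_{X_1}(t)\to 0$ in probability (Lemma~11.7 of Azadkia--Chatterjee). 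All three sequences are shown to share the same limit, so the equivalences fall out as a by-product. Your route through $\E[\Var(\Delta_1\mid\mX)]\to 0$ handles all six equivalences at once and is more economical; the paper's indicator expansion is heavier but delivers the explicit limits of $\E[T_i]$ in the same stroke, whereas you still need a separate NN-replacement step for the second half of the lemma.

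Two points to tighten. First, your description of the off-diagonal control in $\E[(\bar F_n(M_1)-F_Y(M_1))^2]$ is not quite right: it is not that $O(n)$ pairs are ``coupled through $N(1)$'' and the rest vanish. Rather, for \emph{every} pair $j\ne k$ with $j,k\ne 1$, replace $M_1$ by $M_1^{-j,k}:=Y_1\wedge Y_{N^{-j,k}(1)}$; then $\E[Z_j(M_1^{-j,k})Z_k(M_1^{-j,k})]=0$ by independence, and the replacement error is at most $2\,\P(N(1)\in\{j,k\})=O(1/n)$, so the full off-diagonal sum is $O(1/n)$ after the $n^{-2}$ normalization. Second, do not invoke continuity of $x\mapsto F_{Y\mid X=x}$: no such assumption is available. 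The replacement of NN responses by copies from $F_{Y\mid X_1}$ rests solely on the Azadkia--Chatterjee lemma that $\phi(X_{N(1)})-\phi(X_1)\to 0$ in probability for any measurable $\phi$. Finally, for $T_2^*,T_3^*,T_4^*$ the target expressions still carry the random weights $\ind(1\ne N(N(1)))$, $\ind(1=N(N(1)))$, and $|\{j\ne1:N(j)=N(1)\}|$; the paper decouples these weights from the replacement step via the leave-out constructions $N^{-2,3}(\cdot)$ and $A_1^{-2,3}$, and your sketch should make this explicit, since dominated convergence alone does not separate the NN-indicator from the conditional law you are perturbing.
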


\begin{lemma}\label{lemma:hayek2} %\fbox{can we change $a_1$ to $a_3$?}
  \begin{align*}
    \lim_{n \to \infty} [\E[T_5] - 2\E[T_5']] = \E \Big[\Cov\Big[\ind\big(Y_3 \le Y_1 \wedge \tY_1\big), \ind\big(Y_3 \le Y_2 \wedge \tY_2\big) \Biggiven X_1,X_2,X_3 \Big] \Big] =: a_1,\\
    \E[T_5^*]=0, ~~{\rm and}~~\lim_{n \to \infty} \Big\lvert \E\Big[T_5'\Big] - 2\E \Big[\Cov\Big[\ind\big(Y_2 \le Y_1 \wedge \tY_1\big), F_Y\big(Y_2 \wedge \tY_2\big) \Biggiven X_1,X_2 \Big] \Big] \Big\rvert = 0.
  \end{align*}
\end{lemma}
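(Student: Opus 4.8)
The plan is to argue conditionally on $\mX=(X_1,\dots,X_n)$, under which $Y_1,\dots,Y_n$ are independent with $Y_i\sim F_{Y\given X=X_i}$, and to use $\min\{R_i,R_{N(i)}\}=\sum_{k=1}^n\ind(Y_k\le Y_i\wedge Y_{N(i)})$. Writing $A:=Y_i\wedge Y_{N(i)}$ and $B:=Y_j\wedge Y_{N(j)}$, the governing structural fact in the distinct regime $\{i,N(i)\}\cap\{j,N(j)\}=\emptyset$ is that $A$ and $B$ are conditionally independent given $\mX$; consequently a conditional covariance between a function of $\{Y_k,A\}$ and a function of $\{Y_l,B\}$ vanishes unless the index sets $\{k,i,N(i)\}$ and $\{l,j,N(j)\}$ overlap. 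Since every summand lies in $[0,1]$, limit and expectation may be exchanged by bounded convergence, and exchangeability lets me reduce a generic distinct quadruple and free index to the representatives $(1,N(1),2,N(2))$ and $3$, with $Y_{N(1)},Y_{N(2)}$ eventually replaced by independent copies $\tY_1,\tY_2$ drawn from $F_{Y\given X=X_1},F_{Y\given X=X_2}$.

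Two of the three claims are then short. For $\E[T_5^*]=0$, observe that $T_5^*$ is built from $\Cov[F_Y(A),F_Y(B)\given\mX]$ with disjoint index sets; conditional independence of $A$ and $B$ makes this exactly $0$, so $T_5^*\equiv 0$ conditionally and hence in expectation. For $\E[T_5']$, expanding the rank gives $\Cov[\min\{R_i,R_{N(i)}\},F_Y(B)\given\mX]=\sum_{k}\Cov[\ind(Y_k\le A),F_Y(B)\given\mX]$, which by the overlap rule is nonzero only for $k\in\{j,N(j)\}$. The two surviving terms, after the neighbour replacement $Y_{N(1)}\approx\tY_1,\ Y_{N(2)}\approx\tY_2$ and using exchangeability of $Y_2$ and $\tY_2$, each converge in average over $\mX$ to $c:=\E[\Cov[\ind(Y_2\le Y_1\wedge\tY_1),F_Y(Y_2\wedge\tY_2)\given X_1,X_2]]$. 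Summing over the order-$n^2$ distinct ordered pairs and dividing by $n^2$ yields $\E[T_5']\to 2c$, which is the third display.

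The substance is $\E[T_5]-2\E[T_5']\to a_1$. Expanding both ranks, $\Cov[\min\{R_i,R_{N(i)}\},\min\{R_j,R_{N(j)}\}\given\mX]=\sum_{k,l}\Cov[\ind(Y_k\le A),\ind(Y_l\le B)\given\mX]$, and I split the nonzero summands by overlap type. The free--free type ($k=l\notin\{i,N(i),j,N(j)\}$) contributes $\sum_{k}\Cov[\ind(Y_k\le A),\ind(Y_k\le B)\given\mX]$, of order $n$, with representative $\Cov[\ind(Y_3\le Y_1\wedge\tY_1),\ind(Y_3\le Y_2\wedge\tY_2)\given X_1,X_2,X_3]$, so after the $n^{-3}\sum_{i,j}$ normalisation this part converges to $a_1$. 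The two mixed types---$k\in\{j,N(j)\}$ with $l$ summed out (so that $\sum_l\ind(Y_l\le B)=\min\{R_j,R_{N(j)}\}\approx nF_Y(B)$), and symmetrically $l\in\{i,N(i)\}$ with $k$ summed out---each contribute, to leading order, $n$ times the two-term covariances defining $T_5'$, and by the $i\leftrightarrow j$ symmetry both converge in expectation to $2c$. Their sum is therefore $4c$, exactly matching $2\E[T_5']\to 4c$, while every remaining overlap (double hits in $(k,l)$, or the free index landing in the opposite pair) has $O(1)$ count and is killed by the $n^{-3}$ factor. Hence $\E[T_5]\to a_1+4c$, the mixed part cancels, and the advertised limit $a_1$ remains. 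The grouping $\E[T_5]-2\E[T_5']$ (together with $\E[T_5^*]=0$) is precisely the contribution of the distinct-index pattern to $n\E[\Var[\xi_n-\widecheck{\xi}_n\given\mX]]$ in \eqref{eq:hayekmain1}, which is why this combination is the object of interest.

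The main obstacle, shared with Lemma~\ref{lemma:hayek1}, is to make the neighbour replacements rigorous: replacing $Y_{N(i)}$ by an independent draw $\tY_i\sim F_{Y\given X=X_i}$, and $\min\{R_j,R_{N(j)}\}/n$ by $F_Y(B)$, uniformly enough to pass to the limit when only $F_{X,Y}$ is assumed continuous---neither continuity of $x\mapsto F_{Y\given X=x}$ nor absolute continuity of $F_X$ is available. The conditional covariances are bounded functions of $\mX$ depending on the data only through the conditional laws at $X_i,X_{N(i)},X_j,X_{N(j)}$; since $X_{N(i)}\to X_i$ as $n\to\infty$, what is needed is that averaging $F_{Y\given X=\cdot}$ over the shrinking neighbour ball returns $F_{Y\given X=X_i}$ in the mean, which holds for $F_X$-almost every $x$ by a Lebesgue-differentiation / nearest-neighbour-consistency argument and delivers convergence of the expectations $\E[\cdot]$. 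A secondary, purely combinatorial obstacle is to confirm that the non-distinct pairs (those with $j=N(i)$, $N(i)=N(j)$, or $i=N(j)$) number only $O(n)$ and so are negligible after the $n^{-3}\sum_{i,j}$ scaling; this is exactly where the bounded in-degree of the nearest-neighbour graph---the same degree control exploited in Lemma~\ref{lemma:hayek1}---re-enters.
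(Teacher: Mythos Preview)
Your proposal is correct and follows essentially the same route as the paper: both expand $\min\{R_i,R_{N(i)}\}=\sum_k\ind(Y_k\le Y_i\wedge Y_{N(i)})$, exploit conditional independence given $\mX$ to kill covariances when the index sets $\{k,i,N(i)\}$ and $\{\ell,j,N(j)\}$ are disjoint, and then pass to the limit by replacing $Y_{N(i)}$ with an independent draw $\tY_i\sim F_{Y\mid X=X_i}$ via the nearest-neighbour consistency argument (Lemma~11.7 of \cite{azadkia2019simple}). The only organizational difference is that the paper enumerates the surviving $(k,\ell)$ terms explicitly---listing $k\in\{1,N(1),2,N(2)\}$ against a free $\ell=3$ and noting two of the four vanish---whereas you group by ``type'' and use the shortcut $\sum_\ell\ind(Y_\ell\le B)\approx nF_Y(B)$ to identify the mixed contribution with $T_5'$ directly; both bookkeepings yield $\E[T_5]\to a_1+4c$ and $\E[T_5']\to 2c$.
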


\begin{lemma}\label{lemma:hayek3}
  \[
    \lim_{n \to \infty} [\E[T_6'] - \E[T_6^*]] = 2\E \Big[\Cov\Big[\ind\big(Y_2 \le Y_1 \wedge \tY_1\big), F_Y\big(Y_2 \wedge 
    Y \big) \Biggiven X_1,X_2 \Big] \Big] =: 2a_2.
  \]
\end{lemma}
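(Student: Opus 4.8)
The starting point is an exact algebraic identity obtained by comparing the two defining displays. Both $T_6'$ and $T_6^*$ are the conditional-on-$\mX$ covariance of a statistic against the U-statistic sum $B := \sum_{k\neq j} F_Y(Y_k \wedge Y_j)$, and their prefactors differ by exactly a factor $1/n$; since $R_i \wedge R_{N(i)} = \sum_{\ell=1}^n \ind(Y_\ell \le Y_i \wedge Y_{N(i)})$, this gives
\[
T_6' - T_6^* = \frac{1}{n(n-1)}\sum_{i=1}^n \Cov\Big[\tfrac1n\big(R_i \wedge R_{N(i)}\big) - F_Y\big(Y_i \wedge Y_{N(i)}\big),\, B \,\Big|\, \mX\Big].
\]
Thus the task is to quantify the covariance against $B$ of the error incurred by replacing the empirical CDF $\tfrac1n(R_i\wedge R_{N(i)})$ by its population counterpart. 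The two diagonal summands $\ell\in\{i,N(i)\}$ contribute $\ind(Y_i\le Y_{N(i)})+\ind(Y_{N(i)}\le Y_i)=1$ almost surely (continuity of $F_Y$ rules out ties), so they are constant and drop out of the covariance; I would therefore reduce immediately to the off-diagonal sum $\tfrac1n\sum_{\ell\neq i,N(i)}\ind(Y_\ell\le Y_i\wedge Y_{N(i)})$.

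Next I would classify the covariance contributions according to which $Y$-coordinate is shared between the first argument and a summand $F_Y(Y_k\wedge Y_j)$ of $B$: the shared coordinate is $Y_i$, or $Y_{N(i)}$, or one of the free summation coordinates $Y_\ell$ with $\ell\neq i,N(i)$. A degrees-of-freedom count (three free indices $i,\ell,p$ against the effective $1/n^3$ normalization) shows that only single-overlap configurations with $i,N(i),\ell,p$ all distinct can survive; double overlaps and collisions inside the nearest-neighbor graph are $o(1)$, which is precisely where the bounded-degree property of the NN graph in fixed dimension $d$ (the number of $j$ with $N(j)=N(i)$ is bounded by a dimensional constant) enters, exactly as in the proofs of Lemmas~\ref{lemma:hayek1} and \ref{lemma:hayek2}.

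The crux is to show that the "through $Y_i$" and "through $Y_{N(i)}$" contributions of $\Cov[\tfrac1n(R_i\wedge R_{N(i)}),B\mid\mX]$ asymptotically coincide with the corresponding contributions of $\Cov[F_Y(Y_i\wedge Y_{N(i)}),B\mid\mX]$, hence cancel in the difference. Conditioning on $Y_i$, the "through $Y_i$" part depends only on the $Y_i$-conditional mean, and one has $\E[\tfrac1n\sum_{\ell\neq i,N(i)}\ind(Y_\ell\le Y_i\wedge Y_{N(i)})\mid Y_i,\mX]\to\E[F_Y(Y_i\wedge Y_{N(i)})\mid Y_i,\mX]$; this follows from a law of large numbers over $\ell$ (replacing $\tfrac1n\sum_\ell F_{Y\given X_\ell}(\cdot)$ by $F_Y(\cdot)$) together with the integration-by-parts identity $\int F_Y(Y_i\wedge y)\,\d F_{Y\given X_{N(i)}}(y)=\int_{-\infty}^{Y_i}G_{X_{N(i)}}(y)\,\d F_Y(y)$, and likewise for $Y_{N(i)}$. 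After this cancellation only the "through $Y_\ell$" part remains, in which both $\ind(Y_\ell\le Y_i\wedge Y_{N(i)})$ and $F_Y(Y_\ell\wedge Y_p)$ depend on $Y_\ell$; the $k\leftrightarrow j$ symmetry of $B$ produces a factor $2$. Finally I would pass to the limit term by term using exchangeability and the NN approximation $Y_{N(i)}\rightsquigarrow\tY_1$ (an independent draw from $F_{Y\given X_1}$, valid since $X_{N(i)}\to X_i$) together with $Y_p\rightsquigarrow Y\sim F_Y$ for a generic far-away point (integrating out $X_p$ turns $Y_p$ into the marginal $Y$, the covariance being linear in its second argument and correlated only through $Y_\ell$), relabeling $i\mapsto1,\ell\mapsto2$, to obtain
\[
T_6'-T_6^* \longrightarrow 2\,\E\Big[\Cov\big[\ind(Y_2\le Y_1\wedge\tY_1),\, F_Y(Y_2\wedge Y)\,\big|\,X_1,X_2\big]\Big]=2a_2.
\]

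The main obstacle is this cancellation of the $Y_i$- and $Y_{N(i)}$-contributions: unlike the surviving term, each of them is individually of order $\Theta(1)$ and does not vanish on its own, so one must establish that they agree with $T_6^*$ up to $o(1)$ rather than bound them crudely. This forces the conditional-mean matching above to hold with enough uniformity to survive averaging over the random NN graph, and requires careful bookkeeping of the errors from $Y_{N(i)}\rightsquigarrow\tY_1$ and from the $F_{Y\given X_\ell}$-to-$F_Y$ replacement, paralleling the quantitative nearest-neighbor estimates already developed for the $T^*$ and $T'$ terms in Lemmas~\ref{lemma:hayek1}–\ref{lemma:hayek2}.
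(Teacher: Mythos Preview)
Your approach is correct and lands on the same decomposition as the paper: expand $R_1\wedge R_{N(1)}=\sum_k\ind(Y_k\le Y_1\wedge Y_{N(1)})$, classify the covariance with $B=\sum_{k\ne j}F_Y(Y_k\wedge Y_j)$ by which index is shared, and observe that only the ``through the free index $Y_\ell$'' piece survives the subtraction, yielding $2a_2$ after the nearest-neighbor replacement $Y_{N(1)}\rightsquigarrow\tY_1$ and integration of the far-away $Y_p$ to the marginal $Y$.

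Two small differences from the paper are worth noting. First, the paper does not subtract first; it computes $\E[T_6']$ and $\E[T_6^*]$ separately, showing that each equals $4\E[\Cov[F_Y(Y_1\wedge\tY_1),F_Y(Y_1\wedge Y_2)\mid X_1,X_2]]$ plus (for $T_6'$ only) the extra $2a_2$ term. This makes your cancellation step trivial: rather than matching conditional means via an in-sample LLN over $\ell$, the paper takes expectation over $\mX$ first and uses linearity of covariance to integrate out the free $X_3$ exactly via $\E_{X_3}[F_{Y\mid X_3}(\cdot)]=F_Y(\cdot)$. Your conditional-mean argument $\E[\tfrac1n\sum_\ell\ind_\ell\mid Y_i,\mX]\to\E[F_Y(Y_i\wedge Y_{N(i)})\mid Y_i,\mX]$ is true (it is a genuine LLN over the i.i.d.\ $X_\ell$'s, with boundedness giving the needed uniform integrability), but it is a detour compared to the paper's one-line exact identity. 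Second, the integration-by-parts identity you invoke is unnecessary and does not appear in the paper's argument; once $Y_3$ is integrated out, the through-$Y_1$ term becomes $\Cov[F_Y(Y_1\wedge\tY_1),F_Y(Y_1\wedge Y_2)\mid X_1,X_2]$ directly, matching the $T_6^*$ contribution without any rewriting of $\int F_Y(Y_i\wedge y)\,\d F_{Y\mid X_{N(i)}}(y)$.

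Your observation that the diagonal summands $\ell\in\{i,N(i)\}$ contribute the constant $1$ and hence drop out of the covariance is cleaner than the paper's treatment, which simply bounds them as $O(n^{-1})$.
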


\begin{lemma}\label{lemma:hayek4} % \fbox{can we change $a_3$ to $a_1$?}
  \[
    \lim_{n \to \infty} \E[T_7^*] = 4 \E \Big[\Cov\Big[F_Y\big(Y_1 \wedge Y\big), F_Y\big(Y_1 \wedge \tY\big) \Biggiven X_1 \Big] \Big] =: 4a_3.
  \]
\end{lemma}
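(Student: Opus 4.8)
The plan is to compute $\E[T_7^*]$ directly from its definition. Recall $T_7^*=\frac{1}{n(n-1)^2}\Var\big[\sum_{i\ne j}\min\{F_Y(Y_i),F_Y(Y_j)\}\given\mX\big]$, and that by monotonicity of $F_Y$ the kernel equals $F_Y(Y_i\wedge Y_j)$, a symmetric function bounded in $[0,1]$. The structural fact I will exploit is that, conditionally on $\mX$, the variables $Y_1,\dots,Y_n$ are independent with $Y_i\sim F_{Y\given X=X_i}$. I would therefore expand the conditional variance as the double sum $\sum_{(i,j)}\sum_{(k,l)}\Cov[F_Y(Y_i\wedge Y_j),F_Y(Y_k\wedge Y_l)\given\mX]$ over ordered pairs of distinct indices, and classify the summands by the overlap pattern of $\{i,j\}$ and $\{k,l\}$.

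First I would dispose of the easy cases. Whenever $\{i,j\}$ and $\{k,l\}$ are disjoint the conditional covariance vanishes by conditional independence. The terms with $\{i,j\}=\{k,l\}$ number only $O(n^2)$, and since the kernel is bounded each contributes $O(1)$; after multiplication by $\frac{1}{n(n-1)^2}\asymp n^{-3}$ their total contribution is $O(n^{-1})\to0$. The dominant contribution comes from the $4n(n-1)(n-2)$ ordered configurations in which $\{i,j\}$ and $\{k,l\}$ share exactly one index; by the symmetry $F_Y(Y_i\wedge Y_j)=F_Y(Y_j\wedge Y_i)$ each reduces to a covariance of the form $\Cov[F_Y(Y_a\wedge Y_b),F_Y(Y_a\wedge Y_c)\given\mX]$ for distinct indices $a,b,c$, where $a$ is the common index. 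Such a covariance depends on $\mX$ only through $(X_a,X_b,X_c)$, so taking expectations and invoking exchangeability of the triples reduces the claim to showing $\E\big[\Cov[F_Y(Y_1\wedge Y_2),F_Y(Y_1\wedge Y_3)\given X_1,X_2,X_3]\big]=a_3$, after which the prefactor $\frac{4n(n-1)(n-2)}{n(n-1)^2}\to4$ yields the result.

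To identify the limit constant I would apply the law of total covariance conditioning on $(X_1,X_2,X_3,Y_1)$. Given these, $F_Y(Y_1\wedge Y_2)$ and $F_Y(Y_1\wedge Y_3)$ are functions of $Y_2$ and $Y_3$ respectively, which are conditionally independent, so the inner covariance vanishes and only the covariance of the conditional means survives, namely $\Cov[\psi(Y_1,X_2),\psi(Y_1,X_3)\given X_1,X_2,X_3]$ with $\psi(y,x):=\E[F_Y(y\wedge Y')\given Y'\sim F_{Y\given X=x}]$. The crucial step is then to integrate out $X_2$ and $X_3$: since they are i.i.d.\ and independent of $(X_1,Y_1)$, and since the marginal law of $Y'$ when $X'\sim F_X$ is exactly $F_Y$, one gets $\E_{X_2}[\psi(Y_1,X_2)]=\E[F_Y(Y_1\wedge Y)\given Y_1]$ with $Y\sim F_Y$, and likewise for $X_3$ with an independent copy $\tY$. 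This collapses the two satellite conditional distributions back to the marginal $F_Y$ and reproduces precisely $\E\big[\Cov[F_Y(Y_1\wedge Y),F_Y(Y_1\wedge\tY)\given X_1]\big]=a_3$. The main obstacle is this last reduction---tracking the iterated conditioning so that integrating over the two ``extra'' design points $X_2,X_3$ turns the conditional distributions $F_{Y\given X}$ into the marginal $F_Y$, thereby matching the auxiliary variables $Y,\tY$ in the definition of $a_3$; the combinatorial counting of the share-one-index configurations and the boundedness estimates controlling the $O(n^2)$ remainder are routine by comparison.
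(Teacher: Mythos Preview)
Your proposal is correct and follows essentially the same route as the paper: expand the conditional variance as a double sum of conditional covariances, discard the disjoint-index terms (zero by conditional independence) and the full-overlap terms ($O(n^{-1})$ by boundedness), isolate the $4n(n-1)(n-2)$ one-overlap terms, and reduce by exchangeability to $4\,\E\big[\Cov[F_Y(Y_1\wedge Y_2),F_Y(Y_1\wedge Y_3)\given X_1,X_2,X_3]\big]$. The paper then simply writes ``expanding the covariance'' to pass to $4a_3$; your law-of-total-covariance argument (first conditioning on $Y_1$ to kill the inner covariance, then averaging over $X_2,X_3$ to turn the conditional laws $F_{Y\mid X_2},F_{Y\mid X_3}$ into the marginal $F_Y$) is a clean and explicit way to justify that same step.
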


\begin{lemma}\label{lemma:hayek8}
  \begin{align*}
     \lim_{n \to \infty} \E[T_8^*] &= 2\E \Big[\Cov\Big[F_Y\big(Y_1 \wedge \tY_1\big), g(Y_1) \Biggiven X_1\Big] \Big] =: 2b_1,\\
     \lim_{n \to \infty} \E[T_9^*] &= 2\E \Big[\Cov\Big[F_Y\big(Y_1 \wedge Y \big), g(Y_1) \Biggiven X_1\Big] \Big] =: 2b_2,\\
     \lim_{n \to \infty} \E[T_7'] &= \E \Big[\Cov\Big[\ind\big(Y_2 \le Y_1 \wedge \tY_1\big), g(Y_2) \Biggiven X_1,X_2 \Big] \Big]  \\
    &+ 2\E \Big[\Cov\Big[F_Y\big(Y_1 \wedge \tY_1\big), g(Y_1) \Biggiven X_1\Big] \Big]=: b_3,\\
     \lim_{n \to \infty} \E[T_{10}^*] &= \E \Big[\Var\Big[g(Y_1) \Biggiven X_1\Big]\Big].
  \end{align*}
\end{lemma}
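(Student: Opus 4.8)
The plan is to exploit the single structural fact underlying all four identities: conditional on $\mX$, the responses $Y_1,\dots,Y_n$ are mutually independent with $Y_i\sim F_{Y\given X=X_i}$. Consequently, for any statistic $\Psi$ depending on only finitely many $Y$'s, bilinearity of covariance collapses $\Cov[\Psi,\sum_{k=1}^n g(Y_k)\given\mX]$ onto the handful of indices $k$ on which $\Psi$ depends, and the tower property (conditioning further on the relevant $Y$'s) turns each surviving summand into a one- or two-point conditional covariance. Throughout I would use that all $F_Y$-transformed quantities lie in $[0,1]$ and that $g\le 1/4$ to guarantee uniform integrability, so that every ``$\lim$'' below can be passed through by dominated convergence, with the nearest-neighbor reductions mirroring those already carried out in Lemmas \ref{lemma:hayek1}--\ref{lemma:hayek2}.

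I would begin with the two identities that are in fact exact. For $T_{10}^*$, conditional independence gives $\Var[\sum_i g(Y_i)\given\mX]=\sum_i\Var[g(Y_i)\given X_i]$, whence $\E[T_{10}^*]=\E[\Var[g(Y_1)\given X_1]]$ for every $n$. For $T_9^*$, expanding $\Cov[\sum_{i\neq j}F_Y(Y_i\wedge Y_j),\sum_k g(Y_k)\given\mX]$ leaves, for each pair $(i,j)$, only $k\in\{i,j\}$; exchangeability of the pairs doubles the contribution, and averaging over $X_j$ replaces the ``second point'' $Y_j$ by a marginal draw $Y\sim F_Y$ (using $\E[g(Y_i)\given X_i,X_j]=\E[g(Y_i)\given X_i]$), yielding $\E[T_9^*]=2\E[\Cov[F_Y(Y_1\wedge Y),g(Y_1)\given X_1]]=2b_2$ exactly. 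For $T_8^*$ the same collapse keeps only $k\in\{i,N(i)\}$, after which the nearest-neighbor reduction $X_{N(i)}\to X_i$ (so that $Y_{N(i)}$ behaves like an independent copy $\tY_1$ drawn from $F_{Y\given X=X_1}$) together with the symmetry of $Y_1\wedge\tY_1$ in its two arguments gives $\E[T_8^*]\to 2\E[\Cov[F_Y(Y_1\wedge\tY_1),g(Y_1)\given X_1]]=2b_1$.

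The real work is $T_7'$, the only term carrying the ranks. I would write $\min\{R_i,R_{N(i)}\}=\sum_{m=1}^n\ind(Y_m\le W_i)$ with $W_i:=Y_i\wedge Y_{N(i)}$, discard the two summands $m\in\{i,N(i)\}$ (which sum to the constant $1$ and drop out of the covariance), and split $\Cov[\sum_{m\neq i,N(i)}\ind(Y_m\le W_i),\sum_k g(Y_k)\given\mX]$ according to whether the matched index is $k=m$ (the ``matched'' case) or $k\in\{i,N(i)\}$ (the ``anchor'' case). For the anchor case, conditioning further on $(Y_i,Y_{N(i)})$ replaces $\sum_{m\neq i,N(i)}\ind(Y_m\le W_i)$ by its conditional mean $\sum_{m}F_{Y\given X=X_m}(W_i)$, and a law of large numbers over the empirical law of the $X_m$'s gives $\tfrac1n\sum_m F_{Y\given X=X_m}(W_i)\to F_Y(W_i)$; this reproduces exactly the expression defining $T_8^*$ and hence contributes $2b_1$. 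For the matched case, conditional independence of $Y_m$ from $(Y_i,Y_{N(i)})$ reduces each summand to a single-$Y_m$ conditional covariance $\Cov[\ind(Y_m\le W_i),g(Y_m)\given X_m,W_i]$; there are $\sim n^2$ of these against the $n^{-2}$ prefactor, and after the nearest-neighbor reduction $W_i\rightsquigarrow Y_1\wedge\tY_1$ (with $X_m$ a generic independent point) they average to $\E[\Cov[\ind(Y_2\le Y_1\wedge\tY_1),g(Y_2)\given X_1,X_2]]$. Adding the two cases produces precisely $b_3$.

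I expect the main obstacle to lie entirely within $T_7'$, in making the two replacements rigorous and simultaneously controllable under the $n^{-2}$ scaling: the nearest-neighbor substitution $Y_{N(i)}\rightsquigarrow\tY_1$, which needs $\lVert X_{N(i)}-X_i\rVert\to 0$ together with (equi)continuity of $x\mapsto F_{Y\given X=x}$ and a bound on the in-degree of the nearest-neighbor graph (a constant depending only on $d$) to keep the number of coincidence terms $O(n)$ and hence negligible; and the empirical-to-population replacement $n^{-1}\sum_m F_{Y\given X=X_m}(W_i)\to F_Y(W_i)$, which must hold uniformly enough in the random threshold $W_i$ to survive the outer sum over $i$. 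Boundedness of all the integrands keeps the remainders uniformly integrable, so the stated limits follow by dominated convergence once these two approximations are quantified.
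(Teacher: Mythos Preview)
Your proposal is correct and follows essentially the same logic as the paper, but with the $T_7'$ decomposition taken in the dual order. The paper first fixes a single generic rank index $k=2$ (so $\min\{R_1,R_{N(1)}\}=\sum_k\ind(Y_k\le W_1)$ is replaced by $(n-1)\,\ind(Y_2\le W_1)$ up to $O(n^{-1})$), and then splits according to which $g(Y_\ell)$ survives, namely $\ell\in\{1,N(1),2\}$. You instead split first on the $g$-index (anchor $k\in\{i,N(i)\}$ versus matched $k=m$) and then handle the rank sum. The two routes produce the same three pieces: your ``matched'' case is the paper's $\ell=2$ term, and your ``anchor'' case is the paper's $\ell\in\{1,N(1)\}$ terms.

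The one place where your plan is needlessly delicate is the anchor case. You propose to replace $\tfrac1n\sum_m F_{Y\given X_m}(W_i)$ by $F_Y(W_i)$ via a law of large numbers and then worry about uniformity in the random threshold $W_i$. This is not needed: since we only want $\E[T_7']$, exchangeability lets you pick one generic $m$ and take the outer expectation over $X_m$ directly, which gives $\E_{X_m}[F_{Y\given X_m}(w)]=F_Y(w)$ \emph{exactly} for every $w$. That is precisely what the paper does (integrate out $X_2$ in $\Cov[\ind(Y_2\le W_1),g(Y_1)\given\mX]$ to obtain $\Cov[F_Y(W_1),g(Y_1)\given X_1,X_{N(1)}]$), so the second ``obstacle'' you flag disappears. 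The only genuine asymptotic ingredient left is the nearest-neighbor reduction $X_{N(1)}\to X_1$, handled exactly as in Lemmas~\ref{lemma:hayek1}--\ref{lemma:hayek2}.
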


\vspace{0.5cm}

Plugging \eqref{eq:hayekmain11}-\eqref{eq:hayekmain13} to \eqref{eq:hayekmain1} and using Lemmas~\ref{lemma:hayek1}-\ref{lemma:hayek8}, one obtains
\begin{align}\label{eq:hayekmain14}
  &\lim_{n \to \infty} n \E [\Var[\xi_n - \widecheck{\xi}_n \given \mX]] \\
  =& 36 \lim_{n \to \infty} \E \Big[\sum_{i=1}^4 \Big(T_i + T_i^* - 2T_i'\Big) + \Big(T_5 + T_5^* - 2T_5'\Big) - 2\Big(T_6^* - T_6'\Big) \notag\\
  &~~~~~~+ T_7^* + 2T_8^* - 2T_9^* - 2T_7' + T_{10}^* \Big]\notag\\
  = & 36 \Big(a_1 + 4a_2 + 4a_3 - 2\Big(b_3 - 2b_1 + 2b_2 \Big) + \E \Big[\Var\Big[g(Y_1) \Biggiven X_1\Big] \Big] \Big).\notag
\end{align}

For the relationship of $a_1,a_2,a_3$ and $b_1,b_2,b_3$, we establish the following identity.

\begin{lemma}[A key identity]\label{lemma:hayek9} We have
  \[
    a_1 + 4a_2 + 4a_3 = b_3 - 2b_1 + 2b_2 = \E \Big[\Var\Big[g(Y_1) \Biggiven X_1\Big] \Big].
  \]
\end{lemma}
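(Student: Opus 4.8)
The plan is to pass to ``uniform coordinates'' and collapse all six functionals $a_1,a_2,a_3,b_1,b_2,b_3$ onto a single canonical shape, after which both equalities fall out of one elementary algebraic identity. First I would set $U:=F_Y(Y)$ and, for each fixed $x$, let $q_x(u):=\P(F_Y(Y)\ge u\mid X=x)=G_x(F_Y^{-1}(u))$, with $\mu_x$ the conditional law of $U$ given $X=x$. Since $F_{X,Y}$ is continuous, $U$ is marginally uniform, so $\E[q_X(u)]=1-u$ and, crucially, $\E_X[\int f\,\d\mu_X]=\int_0^1 f(u)\,\d u$ for every integrable $f$. Writing $M_2(u):=\E[q_X(u)^2]$ and $\tilde g(u):=\Var[q_X(u)]=M_2(u)-(1-u)^2$, one has $g(Y_1)=\tilde g(U_1)$, and the target reduces to
\[
\E\big[\Var[g(Y_1)\mid X_1]\big]=\int_0^1\tilde g^2\,\d u-\E_X\Big[\big(\textstyle\int \tilde g\,\d\mu_X\big)^2\Big].
\]

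Next I would reduce each functional using the fact that a conditional covariance of two quantities which become conditionally independent once a shared coordinate is fixed equals the covariance of their conditional means. Concretely, $\min(U_i,U_j)=\int_0^1\ind(U_i\ge v)\ind(U_j\ge v)\,\d v$, a uniform copy gives $\E[\min(u,U)]=\psi(u):=u-u^2/2$, and for the two i.i.d.\ copies one has $\E[\ind(U_j\le U_i)\ind(U_j\le \tilde U_i)\mid X_i,U_j]=q_{X_i}(U_j)^2$. Integrating out the ``inner'' independent block (which averages to $M_2$ or $\tilde g$) and then applying the uniform-marginal identity to the outer block yields, with $J(X):=\int M_2\,\d\mu_X$, $K(X):=\int \psi\,\d\mu_X$, $\tilde J(X):=\int\tilde g\,\d\mu_X$, and recalling that the second summand of $b_3$ is exactly $2b_1$ (so $b_3-2b_1=\E[\Cov[\ind(Y_2\le Y_1\wedge\tY_1),g(Y_2)\mid X_1,X_2]]$),
\begin{align*}
a_1&=\int M_2^2-\E_X[J^2], &a_2&=\int M_2\psi-\E_X[JK], &a_3&=\int\psi^2-\E_X[K^2],\\
b_2&=\int\psi\,\tilde g-\E_X[K\tilde J], &b_3-2b_1&=\int M_2\,\tilde g-\E_X[J\tilde J].
\end{align*}

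Finally I would invoke the elementary identity $M_2(u)+2\psi(u)=(1-u)^2+2u-u^2+\tilde g(u)=\tilde g(u)+1$, which gives $J+2K=\tilde J+1$. Completing the square in the $a$-combination and using linearity in the $b$-combination produce $a_1+4a_2+4a_3=\int(M_2+2\psi)^2-\E_X[(J+2K)^2]=\int(\tilde g+1)^2-\E_X[(\tilde J+1)^2]$ and $b_3-2b_1+2b_2=\int(M_2+2\psi)\tilde g-\E_X[(J+2K)\tilde J]=\int(\tilde g+1)\tilde g-\E_X[(\tilde J+1)\tilde J]$. Expanding both and using $\int_0^1\tilde g\,\d u=\E_X[\tilde J]$ (the uniform marginal once more), the degree-one terms cancel in each case, leaving precisely $\int\tilde g^2-\E_X[\tilde J^2]$, i.e.\ $\E[\Var[g(Y_1)\mid X_1]]$, for both sides.

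I expect the main obstacle to be the bookkeeping in the reductions of $a_1$, $a_2$, and $b_3-2b_1$: in each conditional covariance one must correctly spot which coordinate is shared between the two factors, integrate out the remaining i.i.d.\ copies \emph{first} (producing $q_X^2$ or $\psi$), and only then take the outer expectation, taking care that in the ``product'' terms both factors are integrated against the \emph{same} $\mu_X$ (so the outer $X$ cannot be averaged to uniform), whereas in the ``diagonal'' terms the independent blocks collapse to $M_2$ or $\tilde g$ and the outer block does average to the uniform law. Once these reductions are carried out, the remaining algebra is immediate thanks to $M_2+2\psi=\tilde g+1$.
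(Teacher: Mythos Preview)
Your proposal is correct and is essentially the same argument as the paper's, dressed in slightly different notation. The paper also reduces each of $a_1,a_2,a_3$ and $b_3-2b_1,b_2$ to conditional (co)variances of the functions $h$, $h'(t):=F_Y(t)-F_Y^2(t)/2$, and $g$ evaluated at $Y_1$ given $X_1$ (your $M_2,\psi,\tilde g$ in uniform coordinates), and then closes with the same algebraic identity $h+2h'=g+1$ (your $M_2+2\psi=\tilde g+1$) to complete the square; your ``uniform marginal'' cancellation of the degree-one terms is exactly the paper's observation that adding a constant inside a conditional variance or covariance changes nothing.
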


Combining Lemma~\ref{lemma:hayek9} with \eqref{eq:hayekmain14} proves
\begin{align}\label{eq:hayekmain15}
  \lim_{n \to \infty} n \E [\Var[\xi_n - \widecheck{\xi}_n \given \mX]] = 0.
\end{align}

{\bf Step II.} $\lim_{n \to \infty}  n \Var [\E[\xi_n - \widecheck{\xi}_n \given \mX]] = 0$.

Checking \eqref{eq:xin}, one has
\begin{align*}
  \E[\xi_n \given \mX] &= \E \Big[\frac{6}{n^2-1} \sum_{i=1}^n \sum_{k=1}^n \ind\big(Y_k \le Y_i \wedge Y_{N_1(i)}\big) - \frac{2n+1}{n-1} \Biggiven \mX \Big]\\
  & = \frac{6}{n^2-1} \sum_{i=1}^n \sum_{k=1}^n \E \Big[\ind\big(Y_k \le Y_i \wedge Y_{N_1(i)}\big) \Biggiven \mX \Big] - \frac{2n+1}{n-1}.
\end{align*}
Checking \eqref{eq:checkxin}, one has
\[
  \E[\widecheck{\xi}_n \given \mX] = \frac{6n}{n^2-1} \Big(\sum_{i=1}^n \E \Big[ F_Y\big(Y_i \wedge Y_{N_1(i)}\big) \Biggiven \mX \Big] + \sum_{i=1}^n h_0(X_i) \Big).
\]
Consequently, we obtain
\begin{align}\label{eq:hayekmain2}
   \Var [\E[\xi_n - \widecheck{\xi}_n \given \mX]]=&  \frac{36n^2}{(n^2-1)^2} \cdot \Var \Big[\frac{1}{n} \sum_{i=1}^n \sum_{k=1}^n \E \Big[\ind\big(Y_k \le Y_i \wedge Y_{N_1(i)}\big) \Biggiven \mX \Big]\\
   &\quad\quad  - \sum_{i=1}^n \E \Big[ F_Y\big(Y_i \wedge Y_{N_1(i)}\big) \Biggiven \mX \Big] - \sum_{i=1}^n h_0(X_i) \Big].\notag
\end{align}

To apply the Efron-Stein inequality (Theorem 3.1 in \cite{boucheron2013concentration}), recall $\mX = (X_1,\ldots,X_n)$ and define, for any $\ell \in \zahl{n}$,
\[
  \mX_\ell := (X_1,\ldots,X_{\ell-1},\tX_\ell,X_{\ell+1},\ldots,X_n),
\]
where $[\tX_\ell]_{\ell=1}^n$ are independent copies of $[X_\ell]_{\ell=1}^n$.

We fix one $\ell \in \zahl{n}$. For any $i \in \zahl{n}$, let $\tN_1(i)$ be the index of the NN of $i$ in $\mX_\ell$.

For the first term in \eqref{eq:hayekmain2}, we first decompose it as
\begin{align*}
  &\sum_{i=1}^n \sum_{k=1}^n \E \Big[\ind\big(Y_k \le Y_i \wedge Y_{N_1(i)}\big) \Biggiven \mX \Big] \\
  = &\sum_{i=1}^n \E \Big[\ind\big(Y_\ell \le Y_i \wedge Y_{N_1(i)}\big) \Biggiven \mX \Big] + \sum_{i=1}^n \sum_{k=1,k \neq \ell}^n \E \Big[\ind\big(Y_k \le Y_i \wedge Y_{N_1(i)}\big) \Biggiven \mX \Big].
\end{align*}
Notice that $\E \Big[\ind\big(Y_k \le Y_i \wedge Y_{N_1(i)}\big) \Biggiven \mX \Big]$ only depends on $X_k,X_i,X_{N_1(i)}$. Then for any $i \in \zahl{n}$ such that $i \neq \ell, N_1(i) \neq \ell, \tN_1(i) \neq \ell$, we have $N_1(i)=\tN_1(i)$, and then
\[
  \sum_{k=1,k \neq \ell}^n \E \Big[\ind\big(Y_k \le Y_i \wedge Y_{N_1(i)}\big) \Biggiven \mX \Big] - \sum_{k=1,k \neq \ell}^n \E \Big[\ind\big(Y_k \le Y_i \wedge Y_{\tN_1(i)}\big) \Biggiven \mX_\ell \Big] = 0.
\]
One then has
\begin{align}\label{eq:hayekmain21}
  & \sum_{i=1}^n \sum_{k=1}^n \E \Big[\ind\big(Y_k \le Y_i \wedge Y_{N_1(i)}\big) \Biggiven \mX \Big] - \sum_{i=1}^n \sum_{k=1}^n \E \Big[\ind\big(Y_k \le Y_i \wedge Y_{\tN_1(i)}\big) \Biggiven \mX_\ell \Big]\\
  = & \sum_{i=1}^n \E \Big[\ind\big(Y_\ell \le Y_i \wedge Y_{N_1(i)}\big) \Biggiven \mX \Big] - \sum_{i=1}^n \E \Big[\ind\big(Y_\ell \le Y_i \wedge Y_{\tN_1(i)}\big) \Biggiven \mX_\ell \Big]\notag\\
  & + \sum_{k=1,k \neq \ell}^n \E \Big[\ind\big(Y_k \le Y_\ell \wedge Y_{N_1(\ell)}\big) \Biggiven \mX \Big] - \sum_{k=1,k \neq \ell}^n \E \Big[\ind\big(Y_k \le Y_\ell \wedge Y_{\tN_1(\ell)}\big) \Biggiven \mX_\ell \Big]\notag\\
  & + \!\!\!\!\!\!\!\!\!\!\!\!\!\!\!\!\sum_{\substack{i=1\\N_1(i)=\ell~{\rm or}~\tN_1(i)=\ell}}^n \Big[\sum_{k=1,k \neq \ell}^n \E \Big[\ind\big(Y_k \le Y_i \wedge Y_{N_1(i)}\big) \Biggiven \mX \Big] \!-\! \sum_{k=1,k \neq \ell}^n \E \Big[\ind\big(Y_k \le Y_i \wedge Y_{\tN_1(i)}\big) \Biggiven \mX_\ell \Big] \Big].\notag
\end{align}

For the second term in \eqref{eq:hayekmain2}, noticing that $\E \Big[ F_Y\big(Y_i \wedge Y_{N_1(i)}\big) \Biggiven \mX \Big]$ only depends on $X_i,X_{N_1(i)}$, we have
\begin{align}\label{eq:hayekmain22}
  & \sum_{i=1}^n \E \Big[ F_Y\big(Y_i \wedge Y_{N_1(i)}\big) \Biggiven \mX \Big] -  \sum_{i=1}^n \E \Big[ F_Y\big(Y_i \wedge Y_{\tN_1(i)}\big) \Biggiven \mX_\ell \Big]\\
  = & \E \Big[ F_Y\big(Y_\ell \wedge Y_{N_1(\ell)}\big) \Biggiven \mX \Big] - \E \Big[ F_Y\big(Y_\ell \wedge Y_{\tN_1(\ell)}\big) \Biggiven \mX_\ell \Big]\notag\\
  & + \sum_{\substack{i=1\\N_1(i)=\ell~{\rm or}~\tN_1(i)=\ell}}^n \Big[ \E \Big[ F_Y\big(Y_i \wedge Y_{N_1(i)}\big) \Biggiven \mX \Big] - \E \Big[ F_Y\big(Y_i \wedge Y_{\tN_1(i)}\big) \Biggiven \mX_\ell \Big] \Big].\notag
\end{align}

For the third term in \eqref{eq:hayekmain2}, we have
\begin{align*}
  \sum_{i=1}^n h_0(X_i) - \sum_{i=1, i\neq \ell}^n h_0(X_i) - h_0(\tX_\ell) = h_0(X_\ell) - h_0(\tX_\ell).
  \yestag\label{eq:hayekmain23}
\end{align*}

Plugging \eqref{eq:hayekmain21}-\eqref{eq:hayekmain23} to \eqref{eq:hayekmain2} and using the Efron-Stein inequality then yields
\begin{align}\label{eq:hayekmain24}
  &n \Var [\E[\xi_n - \widecheck{\xi}_n \given \mX]] \\
  \le &\frac{18n^3}{(n^2-1)^2} \sum_{\ell=1}^n \E\Big\{\frac{1}{n} \sum_{i=1}^n \E \Big[\ind\big(Y_\ell \le Y_i \wedge Y_{N_1(i)}\big) \Biggiven \mX \Big] - h_0(X_\ell)\notag \\
  & + \frac{1}{n} \sum_{k=1,k \neq \ell}^n \E \Big[\ind\big(Y_k \le Y_\ell \wedge Y_{N_1(\ell)}\big) \Biggiven \mX \Big] - \E \Big[ F_Y\big(Y_\ell \wedge Y_{N_1(\ell)}\big) \Biggiven \mX \Big]\notag\\
  & + \frac{1}{n} \!\!\!\!\!\!\!\!\sum_{\substack{i=1\\N_1(i)=\ell~{\rm or}~\tN_1(i)=\ell}}^n \sum_{k=1,k \neq \ell}^n \E \Big[\ind\big(Y_k \le Y_i \wedge Y_{N_1(i)}\big) \Biggiven \mX \Big] - \!\!\!\!\!\!\!\!\!\!\!\!\!\!\!\! \sum_{\substack{i=1\\N_1(i)=\ell~{\rm or}~\tN_1(i)=\ell}}^n \!\!\!\!\!\!\!\!\E \Big[ F_Y\big(Y_i \wedge Y_{N_1(i)}\big) \Biggiven \mX \Big] \notag\\
  & - \frac{1}{n} \sum_{i=1}^n \E \Big[\ind\big(Y_\ell \le Y_i \wedge Y_{\tN_1(i)}\big) \Biggiven \mX_\ell \Big] + h_0(\tX_\ell) \notag\\
  & - \frac{1}{n} \sum_{k=1,k \neq \ell}^n \E \Big[\ind\big(Y_k \le Y_\ell \wedge Y_{\tN_1(\ell)}\big) \Biggiven \mX_\ell \Big] + \E \Big[ F_Y\big(Y_\ell \wedge Y_{\tN_1(\ell)}\big) \Biggiven \mX_\ell \Big]\notag\\
  & - \frac{1}{n}\!\!\!\!\!\!\!\! \sum_{\substack{i=1\\N_1(i)=\ell~{\rm or}~\tN_1(i)=\ell}}^n \sum_{k=1,k \neq \ell}^n \E \Big[\ind\big(Y_k \le Y_i \wedge Y_{\tN_1(i)}\big) \Biggiven \mX_\ell \Big] +  \!\!\!\!\!\!\!\!\!\!\!\!\!\!\!\!\sum_{\substack{i=1\\N_1(i)=\ell~{\rm or}~\tN_1(i)=\ell}}^n \!\!\!\!\!\!\!\!\!\!\!\!\!\!\!\!\E \Big[ F_Y\big(Y_i \wedge Y_{\tN_1(i)}\big) \Biggiven \mX_\ell \Big] \Big\}^2\notag\\
  \le & \frac{72n^4}{(n^2-1)^2} \E\Big\{\frac{1}{n} \sum_{i=1}^n \E \Big[\ind\big(Y_\ell \le Y_i \wedge Y_{N_1(i)}\big) \Biggiven \mX \Big] - h_0(X_\ell) \notag\\
  & + \frac{1}{n} \sum_{k=1,k \neq \ell}^n \E \Big[\ind\big(Y_k \le Y_\ell \wedge Y_{N_1(\ell)}\big) \Biggiven \mX \Big] - \E \Big[ F_Y\big(Y_\ell \wedge Y_{N_1(\ell)}\big) \Biggiven \mX \Big]\notag\\
  & + \frac{1}{n}\!\!\!\!\!\!\!\! \sum_{\substack{i=1\\N_1(i)=\ell~{\rm or}~\tN_1(i)=\ell}}^n \sum_{k=1,k \neq \ell}^n \E \Big[\ind\big(Y_k \le Y_i \wedge Y_{N_1(i)}\big) \Biggiven \mX \Big] -  \!\!\!\!\!\!\!\!\!\!\!\!\!\!\!\!\sum_{\substack{i=1\\N_1(i)=\ell~{\rm or}~\tN_1(i)=\ell}}^n \!\!\!\!\!\!\!\!\E \Big[ F_Y\big(Y_i \wedge Y_{N_1(i)}\big) \Biggiven \mX \Big] \Big\}^2\notag\\
  \le & \frac{216n^4}{(n^2-1)^2} \Big\{ \E\Big[\frac{1}{n} \sum_{i=1}^n \E \Big[\ind\big(Y_\ell \le Y_i \wedge Y_{N_1(i)}\big) \Biggiven \mX \Big] - h_0(X_\ell) \Big]^2 \notag\\
  & + \E\Big[ \frac{1}{n} \sum_{k=1,k \neq \ell}^n \E \Big[\ind\big(Y_k \le Y_\ell \wedge Y_{N_1(\ell)}\big) \Biggiven \mX \Big] - \E \Big[ F_Y\big(Y_\ell \wedge Y_{N_1(\ell)}\big) \Biggiven \mX \Big] \Big]^2\notag\\
  & + \E\Big[\!\!\!\!\!\!\!\!\sum_{\substack{i=1\\N_1(i)=\ell~{\rm or}~\tN_1(i)=\ell}}^n \!\!\!\!\!\!\!\!\Big(\frac{1}{n} \sum_{k=1,k \neq \ell}^n \E \Big[\ind\big(Y_k \le Y_i \wedge Y_{N_1(i)}\big) \Biggiven \mX \Big] - \E \Big[ F_Y\big(Y_i \wedge Y_{N_1(i)}\big) \Biggiven \mX \Big] \Big) \Big]^2\Big\}\notag\\
  =: & \frac{216n^4}{(n^2-1)^2}\Big(\tT_1 + \tT_2 + \tT_3\Big); \notag
\end{align}
recall that $[X_i]_{i=1}^n$ are independent and identically distributed (i.i.d.), and $[\tX_\ell]_{\ell=1}^n$ are independent copies of $[X_\ell]_{\ell=1}^n$.

We then establish the following three lemmas.

\begin{lemma}\label{lemma:hayek5}
   $ \lim_{n \to \infty} \tT_1 = 0.
  $
\end{lemma}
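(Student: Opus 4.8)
The plan is to show directly that $\tT_1 = \E[A_n^2] \to 0$, where
$A_n := \frac1n \sum_{i=1}^n \E[\ind(Y_\ell \le Y_i \wedge Y_{N(i)}) \given \mX] - h_0(X_\ell)$. First I would use that, conditionally on $\mX$, the $Y_i$ are independent with $Y_i \sim F_{Y\given X=X_i}$, so that for every index $i$ with $i \neq \ell$ and $N(i) \neq \ell$ one has, after integrating out $Y_i, Y_{N(i)}$ given $Y_\ell=t$ and then $t$ over $F_{Y\given X=X_\ell}$,
$\E[\ind(Y_\ell \le Y_i \wedge Y_{N(i)}) \given \mX] = \int G_{X_i}(t)\,G_{X_{N(i)}}(t)\, \d F_{Y\given X=X_\ell}(t)$. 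The indices with $i=\ell$ or $N(i)=\ell$ number at most $1+C_d$ (a point is the nearest neighbor of at most a dimensional constant $C_d$ of the others), each contributes $O(1/n)$ to $A_n$, and may be discarded. Since $h_0(X_\ell)=\int h(t)\,\d F_{Y\given X=X_\ell}(t)$ with $h(t)=\E[G_X^2(t)]$, this gives $A_n = \int D_n(t)\,\d F_{Y\given X=X_\ell}(t) + o(1)$, where $D_n(t) := \frac1n\sum_{i=1}^n G_{X_i}(t)G_{X_{N(i)}}(t) - h(t)$.

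By Jensen's inequality (the integrating measure is a probability measure) I would reduce matters to controlling $\E[D_n(Y^\circ)^2]$, where $Y^\circ \sim F_{Y\given X=X_\ell}$, and split $D_n = D_n^{(1)} + D_n^{(2)}$ with
$D_n^{(2)}(t) := \frac1n\sum_i G_{X_i}^2(t) - h(t)$ the centered average of bounded i.i.d.\ terms, and
$D_n^{(1)}(t) := \frac1n\sum_i G_{X_i}(t)\big[G_{X_{N(i)}}(t) - G_{X_i}(t)\big]$ the nearest-neighbor replacement error. The term $D_n^{(2)}$ is routine: conditionally on $(X_\ell, Y^\circ)$ the summands over $i\neq\ell$ are i.i.d.\ with mean $h(Y^\circ)$, so $\E[D_n^{(2)}(Y^\circ)^2]\le C/n$.

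The crux is $D_n^{(1)}$. Since $|G_{X_i}(t)|\le 1$, one has $D_n^{(1)}(t)^2 \le |D_n^{(1)}(t)| \le \frac1n\sum_i |G_{X_{N(i)}}(t)-G_{X_i}(t)|$. After integrating, replacing $Y^\circ$ by an independent draw $Y^*\sim F_Y$ at $O(1/n)$ cost (justified in the next paragraph), and using exchangeability of the indices together with discarding the $O(C_d/n)$ edge terms, it suffices to prove the nearest-neighbor consistency
\[
  V_n := \E\Big[\int_{\bR} \big\lvert G_{X_{N(1)}}(t) - G_{X_1}(t) \big\rvert \, \d F_Y(t)\Big] \longrightarrow 0.
\]
Here I would invoke two classical facts about Euclidean nearest neighbors: (i) $\lVert X_{N(1)} - X_1\rVert \to 0$ almost surely; and (ii) the domination inequality $\E[\rho(X_{N(1)})] \le C_d\,\E[\rho(X_1)]$ for any nonnegative $\rho$, which follows since each point is the nearest neighbor of at most $C_d=C_d(d)$ others. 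Given $\epsilon>0$, approximate the jointly measurable bounded kernel $(x,t)\mapsto G_x(t)$ by a jointly continuous $\tilde G$ with $\lVert\tilde G\rVert_\infty\le 1$ and $\int\!\int |G_x(t)-\tilde G(x,t)|\, \d F_Y(t)\,\d F_X(x) < \epsilon$ (Lusin's theorem plus truncation). Splitting $V_n$ through $\tilde G$: the middle term $\E\int |\tilde G(X_{N(1)},t)-\tilde G(X_1,t)|\, \d F_Y(t)\to 0$ by continuity, (i), and bounded convergence; the $X_1$-term is $<\epsilon$ by construction; and the $X_{N(1)}$-term equals $\E[\rho(X_{N(1)})]$ with $\rho(x):=\int|G_x(t)-\tilde G(x,t)|\,\d F_Y(t)$, which by (ii) is $\le C_d\,\E[\rho(X_1)] < C_d\,\epsilon$. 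Letting $\epsilon\downarrow 0$ yields $V_n\to 0$.

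The main obstacle is precisely this nearest-neighbor replacement step: because $G_x(t)$ is assumed only measurable (not smooth) in $x$, one cannot bound $|G_{X_{N(1)}}(t)-G_{X_1}(t)|$ pointwise, and the argument must route through Lusin approximation combined with the combinatorial domination inequality (ii), the role of which is to transfer the $L^1$-approximation bound from the law $F_X$ to the law of $X_{N(1)}$. A secondary, purely bookkeeping nuisance is the self-dependence created by $X_\ell$ appearing both in the integrating measure $F_{Y\given X=X_\ell}$ and inside $D_n$ through the nearest-neighbor graph; this is absorbed by isolating the $i=\ell$ summand and again using that $X_\ell$ is the nearest neighbor of at most $C_d$ indices, so that replacing $Y^\circ$ by an independent $F_Y$-draw costs only $O(1/n)$. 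The $d=1$ right-nearest-neighbor version needed for $\overline\xi_n$ is identical, with $C_d$ replaced by the corresponding one-sided constant.
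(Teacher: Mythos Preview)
Your argument is correct but takes a genuinely different route from the paper's. You rewrite $A_n$ as an integral of $D_n(t)=\frac1n\sum_i G_{X_i}(t)G_{X_{N(i)}}(t)-h(t)$ against $F_{Y\mid X=X_\ell}$, apply Jensen, and then split $D_n$ into an i.i.d.\ fluctuation piece $D_n^{(2)}$ (handled by a variance bound $O(1/n)$) and a nearest-neighbor replacement piece $D_n^{(1)}$ (handled via a Lusin approximation together with the combinatorial domination $\E[\rho(X_{N(1)})]\le C_d\,\E[\rho(X_1)]$, which follows from $\sum_i\rho(X_{N(i)})\le C_d\sum_j\rho(X_j)$). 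This is a clean bias--variance argument, and your handling of the coupling between the integrating measure $F_{Y\mid X=X_\ell}$ and the graph is right in spirit: after excising the at most $1+C_d$ indices with $i=\ell$ or $N(i)=\ell$, one can pass from $N(i)$ to $N^{-\ell}(i)$ (the NN with $X_\ell$ removed) and then integrate out $X_\ell$ to replace $F_{Y\mid X=X_\ell}$ by $F_Y$. The phrase ``at $O(1/n)$ cost'' is a slight oversimplification of this two-step maneuver, but the substance is fine.

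The paper's proof is shorter and exploits a structural observation you bypass. It expands the square $\tT_1$ as a double sum over $i,j$; the diagonal contributes $O(1/n)$, and by exchangeability the off-diagonal reduces to the single cross product
\[
\E\Big[\big(\E[\ind(Y_3\le Y_1\wedge Y_{N(1)})\mid\mX]-h_0(X_3)\big)\big(\E[\ind(Y_3\le Y_2\wedge Y_{N(2)})\mid\mX]-h_0(X_3)\big)\Big].
\]
After replacing $Y_{N(1)},Y_{N(2)}$ by independent draws $\tY_1,\tY_2$ from $F_{Y\mid X_1},F_{Y\mid X_2}$ (citing the NN consistency lemma of Azadkia--Chatterjee rather than re-deriving it via Lusin), the two factors become conditionally independent given $X_3$, and each has conditional mean zero by the very definition of $h_0$; hence the product is exactly zero. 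So where you bound $\E[A_n^2]$ by separately controlling a variance and a bias, the paper recognizes that the off-diagonal part of $\E[A_n^2]$ is (asymptotically) a product of two centered, independent factors and therefore vanishes identically. Your approach is more self-contained (you effectively reprove the NN consistency step) and perhaps more robust to variations of the statistic; the paper's is slicker and leans on the specific centering built into $h_0$.
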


\begin{lemma}\label{lemma:hayek6}
  $
    \lim_{n \to \infty} \tT_2 = 0.
  $
\end{lemma}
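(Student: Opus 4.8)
The plan is to reduce $\tT_2$ to the $L^2$-size of a uniform empirical fluctuation and then close it with a Glivenko--Cantelli argument. Throughout, $\ell$ denotes the fixed index remaining after the symmetrization in \eqref{eq:hayekmain24}; write $W := Y_\ell \wedge Y_{N(\ell)}$ and let $A_\ell$ be the $\mX$-measurable quantity inside the square defining $\tT_2$, so that $\tT_2 = \E[A_\ell^2]$.

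First I would rewrite $A_\ell$ using the conditional independence of the responses. Conditionally on $\mX$, the variables $Y_1,\ldots,Y_n$ are independent with $Y_k$ having distribution function $F_{Y\given X=X_k}$. Hence, for every $k \neq \ell, N(\ell)$, the response $Y_k$ is independent of $W$ given $\mX$, so that $\E[\ind(Y_k \le W)\given\mX] = \E[F_{Y\given X=X_k}(W)\given\mX]$, while the single index $k=N(\ell)$ reduces to $\tfrac1n\E[\ind(Y_{N(\ell)}\le Y_\ell)\given\mX]$ and is bounded surely by $1/n$. Setting $\bar m(w) := n^{-1}\sum_{k=1}^n F_{Y\given X=X_k}(w)$ and completing the partial average $n^{-1}\sum_{k\neq\ell,N(\ell)}$ to the full average $\bar m$ (which removes the two summands indexed by $\ell$ and $N(\ell)$ at a deterministic cost $O(1/n)$), I obtain
\[
A_\ell = \E\big[\bar m(W) - F_Y(W) \given \mX\big] + O(1/n),
\]
where the remainder is bounded surely by a constant multiple of $1/n$.

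Next, since the conditional law of $W$ given $\mX$ is a probability measure, the conditional expectation can be bounded pointwise, giving
\[
|A_\ell| \le \E\big[\,|\bar m(W) - F_Y(W)|\given \mX\big] + O(1/n) \le \sup_{w \in \bR} |\bar m(w) - F_Y(w)| + O(1/n).
\]
Therefore $\tT_2 = \E[A_\ell^2] \le 2\,\E\big[\sup_{w \in \bR}|\bar m(w) - F_Y(w)|^2\big] + O(1/n^2)$, and it remains only to show that the uniform deviation of $\bar m$ from $F_Y$ tends to $0$ in $L^2$. Here $\bar m$ is an average of $n$ i.i.d.\ increasing $[0,1]$-valued functions with $\E[\bar m(w)] = F_Y(w)$ for every $w$. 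Exploiting the monotonicity of $\bar m$ and $F_Y$ together with the continuity of $F_Y$, a standard bracketing argument applies: for any $\varepsilon > 0$ pick a finite grid $w_0 < \cdots < w_M$ with successive $F_Y$-increments at most $\varepsilon$; monotonicity then controls $\sup_w|\bar m(w)-F_Y(w)|$ by $\varepsilon$ plus $\max_j|\bar m(w_j) - F_Y(w_j)|$, and the latter maximum over the finite grid tends to $0$ almost surely by the strong law at each point. Since the supremum is bounded by $1$, dominated convergence yields the claimed $L^2$-convergence and hence $\tT_2 \to 0$.

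The step demanding the most care is the reduction in the second paragraph: one must verify that the data-dependent index $N(\ell)$ couples $W$ only to the pair $(X_\ell, X_{N(\ell)})$, so that for every other $k$ the conditional independence of $Y_k$ and $W$ given $\mX$ holds \emph{exactly}, cleanly separating the randomness of the nearest-neighbor graph from that of the responses. Once $A_\ell$ is expressed through the $\mX$-measurable object $\bar m$, the remaining analysis is a routine uniform law of large numbers requiring only $o(1)$ rather than any rate, and it does not call upon absolute continuity of $F_X$; this mirrors, in spirit, the treatment of $\tT_1$ in Lemma~\ref{lemma:hayek5}.
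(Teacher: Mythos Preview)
Your proof is correct, but it takes a genuinely different route from the paper's.

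The paper expands the square defining $\tT_2$ into a double sum over indices $k,k'$; after discarding $O(n^{-1})$ diagonal contributions and relabeling, the main term becomes
\[
\E\Big[\Big(\E\big[\ind(Y_2 \le Y_1 \wedge Y_{N(1)})\,\big|\,\mX\big]-\E\big[F_Y(Y_1\wedge Y_{N(1)})\,\big|\,\mX\big]\Big)\Big(\E\big[\ind(Y_3 \le Y_1 \wedge Y_{N(1)})\,\big|\,\mX\big]-\E\big[F_Y(Y_1\wedge Y_{N(1)})\,\big|\,\mX\big]\Big)\Big].
\]
They then invoke the nearest-neighbor approximation (Lemma~11.7 of \cite{azadkia2019simple}) to replace $Y_{N(1)}$ by an independent copy $\tY_1\sim F_{Y\mid X=X_1}$ at cost $o(1)$; after that, each factor is a function of $(X_1,X_2)$ or $(X_1,X_3)$ only, so conditioning on $X_1$ factorizes the product, and each conditional factor is seen to vanish identically.

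Your argument instead recognizes the bracketed quantity $A_\ell$ as $\E[\bar m(W)-F_Y(W)\mid\mX]+O(1/n)$ with $\bar m(w)=n^{-1}\sum_k F_{Y\mid X=X_k}(w)$, bounds it by the $\mX$-measurable quantity $\sup_w|\bar m(w)-F_Y(w)|$, and closes with a Glivenko--Cantelli argument exploiting that the $F_{Y\mid X=X_k}(\cdot)$ are i.i.d.\ monotone $[0,1]$-valued processes with mean $F_Y$. This is more elementary: it never touches the nearest-neighbor convergence $G_{X_{N(1)}}(t)\to G_{X_1}(t)$ at all, because the $\sup_w$ bound makes the particular value of $W$ (and hence the NN graph) irrelevant. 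The paper's route, by contrast, fits the pattern used throughout Section~3 (expand, approximate $X_{N(1)}\approx X_1$, factorize), which pays off in uniformity of technique across Lemmas~\ref{lemma:hayek1}--\ref{lemma:hayek7}. Your final remark that the argument ``mirrors, in spirit, the treatment of $\tT_1$'' is slightly off: the paper handles $\tT_1$ by the same expand-and-factorize method, not by a uniform LLN.
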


\begin{lemma}\label{lemma:hayek7}
  $
    \lim_{n \to \infty} \tT_3 = 0.
  $
\end{lemma}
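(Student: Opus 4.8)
The plan is to recognize the inner summand of $\tT_3$ as the conditional evaluation, at a random argument, of a centered empirical process that depends only on $\mX$, and then to combine (i) the boundedness of the nearest-neighbor in-degree with (ii) a uniform law of large numbers. Fix $\ell$ and an index $i$ in the sum. For $k \notin \{i,N(i)\}$, $Y_k$ is conditionally independent of $(Y_i,Y_{N(i)})$ given $\mX$, and $\E[\ind(Y_k \le s)\given X_k]=1-G_{X_k}(s)$, while $\E[F_Y(Y_i \wedge Y_{N(i)})\given \mX]=\E[1-G(Y_i\wedge Y_{N(i)})\given \mX]$. Hence the summand equals
\[
\psi_i := \E\big[ D_n(Y_i \wedge Y_{N(i)}) \given \mX\big] + O(1/n), \qquad D_n(s) := G(s) - \frac1n \sum_{k=1}^n G_{X_k}(s),
\]
where the $O(1/n)$ absorbs the finitely many excluded indices $k\in\{\ell,i,N(i)\}$, each contributing at most $1/n$ in absolute value. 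Since $D_n(\cdot)$ is a deterministic function of $\mX$, the projection only evaluates it at a random point, so $|\psi_i|\le \sup_{t\in\bR}|D_n(t)|+O(1/n)$ for \emph{every} realization.

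Next I would invoke the classical fact that in $(\bR^d,\lVert\cdot\rVert)$ a single point can be the nearest neighbor of at most $c_d$ others, with $c_d$ depending only on $d$; applying this to both $\mX$ and $\mX_\ell$ bounds the (random) number of indices $i$ with $N(i)=\ell$ or $\tN(i)=\ell$ by $2c_d$, deterministically. Combining with the previous bound on $\psi_i$,
\[
\Big| \sum_{\substack{i:\,N(i)=\ell\\ {\rm or}~\tN(i)=\ell}} \psi_i \Big| \le 2 c_d \sup_{t\in\bR}|D_n(t)| + O(1/n),
\]
so that $\tT_3 \le 8 c_d^2\, \E\big[\sup_{t\in\bR} D_n(t)^2\big] + O(1/n^2)$. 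The random cardinality of the outer index set is thus rendered harmless: every term is dominated by the single object $\sup_{t}|D_n(t)|$, which does not depend on $i$.

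It then remains to show $\E[\sup_t D_n(t)^2]\to 0$. Each $G_{X_k}(\cdot)$ is monotone with values in $[0,1]$ and $G(\cdot)$ is continuous, so a standard bracketing argument applies: choosing $-\infty=t_0<\cdots<t_m=+\infty$ with $G(t_{j-1})-G(t_j)\le 1/m$ and using monotonicity of $s\mapsto \frac1n\sum_k G_{X_k}(s)$ gives $\sup_t|D_n(t)|\le \max_{1\le j\le m}|D_n(t_j)|+1/m$. Since $\E[D_n(t_j)^2]=n^{-1}\Var[G_X(t_j)]\le 1/(4n)$, this yields $\E[\sup_t D_n(t)^2]\le 2\sum_{j}\E[D_n(t_j)^2]+2/m^2\le m/(2n)+2/m^2$, and taking $m\asymp n^{1/3}$ gives $\E[\sup_t D_n(t)^2]=O(n^{-2/3})\to 0$, which completes the proof.

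The conceptual crux — and the only step requiring care — is the first one: correctly checking that, conditional on $\mX$, the summand reduces to the deterministic centered process $D_n$ evaluated at $Y_i\wedge Y_{N(i)}$, so that the $Y$-randomness contributes nothing beyond a uniform envelope. Once this identification is made, the bounded in-degree converts the random-cardinality outer sum into a finite sum with a uniform bound, and the uniform law of large numbers for the monotone class $\{G_{X}(\cdot)\}$ is routine.
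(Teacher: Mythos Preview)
Your proof is correct and takes a genuinely different route from the paper's argument.

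The paper also starts from the bounded in-degree fact to reduce to a single index $i$, but then proceeds very differently: it expands the square $\psi_i^2$ as a double sum over $k,k'$, uses exchangeability to replace the indicator $\ind(N(1)=\ell)+\ind(\tN(1)=\ell)$ by its average over $\ell$ (thereby cancelling the factor $n-1$ in front), and arrives at the product
\[
\E\Big[\big(\E[\ind(Y_2\le Y_1\wedge Y_{N(1)})\mid\mX]-\E[F_Y(Y_1\wedge Y_{N(1)})\mid\mX]\big)\big(\E[\ind(Y_3\le \cdot)\mid\mX]-\cdots\big)\Big],
\]
which is then shown to be $o(1)$ by the same mechanism as in Lemma~\ref{lemma:hayek6} (replace $Y_{N(1)}$ by $\tY_1$ via $G_{X_{N(1)}}(t)\to G_{X_1}(t)$, then use conditional independence of $Y_2,Y_3$ given $X_1$ to see the centered product has mean zero). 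Your argument, by contrast, never uses the nearest-neighbor convergence $X_{N(1)}\to X_1$ at all: once you recognize that $\psi_i$ is the conditional expectation of the $\mX$-measurable process $D_n$ evaluated at a random point, the whole problem collapses to a Glivenko--Cantelli statement for the monotone class $\{G_x(\cdot):x\in\bR^d\}$, and the bracketing with $m\asymp n^{1/3}$ gives an explicit rate $O(n^{-2/3})$. Your route is more elementary and more quantitative; the paper's route stays within the machinery already built for Lemmas~\ref{lemma:hayek5}--\ref{lemma:hayek6} and reuses the key ingredient $G_{X_{N(1)}}\to G_{X_1}$ that drives the rest of the H\'ajek representation.

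One minor remark: the identity $\E[\ind(Y_k\le s)\mid X_k]=1-G_{X_k}(s)$ holds only up to a possible atom $\P(Y_k=s\mid X_k)$, but since $s$ is substituted by $Y_i\wedge Y_{N(i)}$ and the marginal $F_Y$ is continuous, this atom contribution is zero almost surely and does not affect your bound.
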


Applying Lemmas~\ref{lemma:hayek5}-\ref{lemma:hayek7} to \eqref{eq:hayekmain24} yields
\begin{align}\label{eq:hayekmain25}
  \lim_{n \to \infty} n \Var [\E[\xi_n - \widecheck{\xi}_n \given \mX]] = 0.
\end{align}

\vspace{0.5cm}

{\bf Step III.} $\lim_{n \to \infty} n\Var[\widecheck{\xi}_n - \xi_n^*] = 0$.

By the definition of $\widecheck{\xi}_n$ in \eqref{eq:checkxin}, one has
\begin{align*}
  \widecheck{\xi}_n = & \frac{6n^2}{n^2-1} \Big(\frac{1}{n} \sum_{i=1}^n \min\big\{F_Y(Y_i), F_Y(Y_{N_1(i)})\big\} - \frac{1}{n(n-1)} \sum_{\substack{i,j=1\\i \neq j}}^n \min\big\{F_Y(Y_i), F_Y(Y_j)\big\} \\
  &+ \frac{1}{n}\sum_{i=1}^n g(Y_i)  + \frac{1}{n(n-1)} \sum_{\substack{i,j=1\\i \neq j}}^n\E \Big[ \min\big\{F_Y(Y_i), F_Y(Y_j)\big\} \Biggiven X_i,X_j \Big] \\
  &- \frac{1}{n} \sum_{i=1}^n \E \Big[ g(Y_i) \Biggiven X_i \Big] + \frac{1}{n} \sum_{i=1}^n h_0(X_i) \Big).
\end{align*}

Notice that $\widecheck{\xi}_n$ consists of U-statistic terms. For any $x \in \bR^d$ and $t \in \bR$, define
\begin{align*}
  \tilde{h} (t) := 2\E \Big[\min\big\{F_Y(Y), F_Y(t)\big\}\Big] - \frac{1}{3} ~~{\rm and}~~ \tilde{h}_0(x) := 2\E \Big[\min\big\{F_Y(Y), F_Y(Y_x)\big\}\Big] - \frac{1}{3},
\end{align*}
where $Y \sim F_Y$, $Y_x \sim F_{Y \given X=x}$ and are independent. Using the probability integral transform and the boundedness of $F_Y$,
\begin{align*}
  & \E\Big[\min\big\{F_Y(Y_1), F_Y(Y_2)\big\}\Big] = 1/3, ~~\E\Big[\Big[\min\big\{F_Y(Y_1), F_Y(Y_2)\big\}\Biggiven X_1, X_2 \Big]\Big] = 1/3,\\
  & \E\Big[\min\big\{F_Y(Y_1), F_Y(Y_2)\big\}\Big]^2 \le 1,~~ \E\Big[\E \Big[\min\big\{F_Y(Y_1), F_Y(Y_2)\big\} \Biggiven X_1,X_2 \Big]\Big]^2 \le 1.
\end{align*}
Then the standard U-statistic H\'ajek projection \cite[Theorem 12.3]{MR1652247} gives
\begin{align}\label{eq:hayekmain33}
  \sqrt{n} \widecheck{\xi}_n = & \frac{6n^2}{n^2-1} \Big(\frac{1}{\sqrt{n}} \sum_{i=1}^nF_Y(Y_i \wedge Y_{N_1(i)}) - \frac{1}{\sqrt{n}} \sum_{i=1}^n \tilde{h}(Y_i) + \frac{1}{\sqrt{n}} \sum_{i=1}^n g(Y_i) \\
  & + \frac{1}{\sqrt{n}} \sum_{i=1}^n \tilde{h}_0(X_i) - \frac{1}{\sqrt{n}} \sum_{i=1}^n \E \Big[ g(Y_i) \Biggiven X_i \Big] + \frac{1}{\sqrt{n}} \sum_{i=1}^n h_0(X_i) \Big) + Q,
  \notag
\end{align}
with $\E[Q^2] \lesssim n^{-1}$.

Notice that for $\tilde{h}$ and $\tilde{h}_0$, $F_Y(Y)$ follows a uniform distribution on $[0,1]$ with $Y \sim F_Y$. Then it is ready to check
\[
  \tilde{h}(t) = 2F_Y(t) - F_Y^2(t) - \frac{1}{3} ~~~{\rm and}~~~ \tilde{h}_0(x) = 2\E[F_Y(Y) \given X=x] - \E[F_Y^2(Y) \given X=x] - \frac{1}{3}.
\]
Recall that for any $t \in \bR$, $h(t) = \E [G_X^2(t)]$ and $g(t)= \E [G_X^2(t)] - G^2(t) = h(t) - G^2(t)$. Then
\begin{align*}
   g(t) - \tilde{h}(t) =& h(t) - G^2(t) - \Big[ 2F_Y(t) - F_Y^2(t) - \frac{1}{3} \Big]\yestag\label{eq:hayekmain31}\\
  = & h(t) - \Big(1-F_Y(t)\Big)^2 - \Big[ 2F_Y(t) - F_Y^2(t) - \frac{1}{3} \Big] \\
  =& h(t) - \frac{2}{3}.
\end{align*}
Similarly, recall that $h_0(x) = \E[h(Y) \given X=x]$ and $g(t)=h(t) - G^2(t)$. Then for any $x \in \bR^d$,
\begin{align}\label{eq:hayekmain32}
   &\tilde{h}_0(x) - \E [ g(Y) \given X=x ] + h_0(x) \\
   =& \tilde{h}_0(x) - \E [ g(Y) \given X=x ] + \E[h(Y) \given X=x] \notag\\
  = & \E[G^2(Y) \given X=x] + 2\E[F_Y(Y) \given X=x] - \E[F_Y^2(Y) \given X=x] - \frac{1}{3} \notag\\
  =& 2/3.\notag
\end{align}

Plugging \eqref{eq:hayekmain31} and \eqref{eq:hayekmain32} to \eqref{eq:hayekmain33} yields
\begin{align*}
  \sqrt{n} \widecheck{\xi}_n = & \frac{6n^2}{n^2-1} \Big(\frac{1}{\sqrt{n}} \sum_{i=1}^nF_Y(Y_i \wedge Y_{N_1(i)}) + \frac{1}{\sqrt{n}} \sum_{i=1}^n h(Y_i)\Big)+ Q = \sqrt{n} \xi_n^* + Q.
\end{align*}
Since $\E[Q^2] \lesssim n^{-1}$, we obtain
\begin{align}\label{eq:hayekmain35}
  \lim_{n \to \infty} n\Var[\widecheck{\xi}_n - \xi_n^*] = 0.
\end{align}

\vspace{0.5cm}

Lastly, combining \eqref{eq:hayekmain15}, \eqref{eq:hayekmain25}, and \eqref{eq:hayekmain35} completes the proof.
\end{proof}

%\subsection{Proof of Theorem~\ref{thm:clt}}

\begin{proof}[Proof of Theorem~\ref{thm:clt}]

Let
\[
  W_n:= \frac{1}{\sqrt{n}} \sum_{i=1}^nF_Y(Y_i \wedge Y_{N_1(i)}) + \frac{1}{\sqrt{n}} \sum_{i=1}^n h(Y_i).
\]

Then $\sqrt{n} \xi_n^* = \frac{6n^2}{n^2-1} W_n$, and 
\begin{align}\label{eq:clt1}
  \big(\xi_n^* - \E[\xi_n^*]\big)/\sqrt{\Var[\xi_n^*]} = (W_n-\E[W_n])/\sqrt{\Var[W_n]}.
\end{align}
It suffices to establish the self-normalization central limit theorem for $W_n$.

Let $\delta_n$ be the Kantorovich–Wasserstein distance between the laws of 
\[
(W_n-\E[W_n])/\sqrt{\Var[W_n]}
\]
and the standard Gaussian. Notice that 
\begin{itemize}
\item[(i)] for any $i \in \zahl{n}$, $F_Y(Y_i \wedge Y_{N_1(i)}) + h(Y_i)$ is the function of $(X_i,Y_i)$ and its NN $(X_{N_1(i)},Y_{N_1(i)})$, with NN graph constructed by $\{X_i\}_{i=1}^n$; 
\item[(ii)] both $F_Y$ and $h$ are bounded;
\item[(iii)] by Proposition~\ref{prop:variance} and Theorem~\ref{thm:hayek}, $\liminf_{n \to \infty} \Var[W_n] = \liminf_{n \to \infty} n\Var[\xi_n^*]/36$, which is further equal to $\liminf_{n \to \infty} n\Var[\xi_n]/36 > 0$. 
\end{itemize}
Then using Theorem 3.4 in \cite{MR2435859} with some minor modification since we now consider $[(X_i,Y_i)]_{i=1}^n$ instead of $[X_i]_{i=1}^n$, one can show $\lim_{n \to \infty} \delta_n = 0$. Since Kantorovich–Wasserstein distance is stronger than weak convergence, we obtain 
\begin{align}\label{eq:clt2}
  (W_n-\E[W_n])/\sqrt{\Var[W_n]} \stackrel{\sf d}{\longrightarrow} N(0,1).
\end{align}

% \fbox{was "$\stackrel{\sf d}{\longrightarrow}$" introduced?}

Combining \eqref{eq:clt1} and \eqref{eq:clt2} completes the proof for $\xi_n^*$.

For $\overline{\xi}_n^*$, the only difference is that this time we consider the right NN instead of NN. While Theorem 3.4 in \cite{MR2435859} can not be directly applied, we can identify an interaction rule as Step III of the proof of Theorem 3.2 in \cite{lin2021boosting} with the number of right NN to be 1. Then the self-normalization central limit theorem for $\overline{\xi}_n^*$ is followed.
\end{proof}

\begin{proof}[Proof of Theorem~\ref{thm:var}]
Invoking \eqref{eq:hayekmain11} and Lemmas \ref{lemma:hayek1} and \ref{lemma:hayek2}, one has
\begin{align*}
  n \E[\Var[\xi_n \given \mX]] = & 36(1+O(n^{-2})) \Big\{\E \Big[ \Var \Big[ F_Y\big(Y_1 \wedge \tY_1\big) \Biggiven X_1 \Big] \Big] \\
  & + 2\E \Big[\Cov\Big[F_Y\big(Y_1 \wedge \tY_1\big), F_Y\big(\tY_1 \wedge \tY_1' \big) \Biggiven X_1 \Big] \ind \Big( 1 \neq N_1(N_1(1)) \Big) \Big] \\
  & + \E \Big[\Cov\Big[F_Y\big(Y_1 \wedge \tY_1\big), F_Y\big(\tY_1 \wedge \tY_1' \big) \Biggiven X_1 \Big] \Big\lvert \Big\{j: j \neq 1, N_1(j) = N_1(1)\Big\} \Big\rvert \Big] \\
  & + \E \Big[ \Var \Big[ F_Y\big(Y_1 \wedge \tY_1\big) \Biggiven X_1 \Big] \ind \Big( 1 = N_1(N_1(1)) \Big) \Big]\\
  & + 4 \E \Big[\Cov\Big[\ind\big(Y_2 \le Y_1 \wedge \tY_1\big), F_Y\big(Y_2 \wedge \tY_2\big) \Biggiven X_1,X_2 \Big] \Big] \\
  & + \E \Big[\Cov\Big[\ind\big(Y_3 \le Y_1 \wedge \tY_1\big), \ind\big(Y_3 \le Y_2 \wedge \tY_2\big) \Biggiven X_1,X_2,X_3 \Big] \Big] \Big\} + o(1).
\end{align*}

The following lemma establishes approximation for each term above.
\begin{lemma}\label{lemma:var1}
  \begin{align*}
    &\frac{1}{n^3} \sum_{i=1}^n \Big[ \Big(R_i \wedge R_{N_1(i)}\Big)  \Big(R_i \wedge R_{N_1(i)} - R_{N_2(i)} \wedge R_{N_3(i)}\Big)\Big] - \E \Big[ \Var \Big[ F_Y\big(Y_1 \wedge \tY_1\big) \Biggiven X_1 \Big] \Big] \stackrel{\sf p}{\longrightarrow} 0,\\
    &\frac{1}{n^3} \sum_{i=1}^n \Big[\Big(R_i \wedge R_{N_1(i)}\Big)\Big(R_i \wedge R_{N_2(i)} - R_{N_2(i)} \wedge R_{N_3(i)}\Big) \ind \Big( i \neq N_1(N_1(i)) \Big) \Big] \\
    &- \E \Big[\Cov\Big[F_Y\big(Y_1 \wedge \tY_1\big), F_Y\big(\tY_1 \wedge \tY_1' \big) \Biggiven X_1 \Big] \ind \Big( 1 \neq N_1(N_1(1)) \Big) \Big] \stackrel{\sf p}{\longrightarrow} 0,\\
    &\frac{1}{n^3} \sum_{i=1}^n \Big[\Big(R_i \wedge R_{N_1(i)}\Big)\Big(R_i \wedge R_{N_2(i)} - R_{N_2(i)} \wedge R_{N_3(i)}\Big) \Big\lvert \Big\{j: j \neq i, N_1(j) = N_1(i)\Big\} \Big\rvert \Big] \\
    &- \E \Big[\Cov\Big[F_Y\big(Y_1 \wedge \tY_1\big), F_Y\big(\tY_1 \wedge \tY_1' \big) \Biggiven X_1 \Big] \Big\lvert \Big\{j: j \neq 1, N_1(j) = N_1(1)\Big\} \Big\rvert \Big] \stackrel{\sf p}{\longrightarrow} 0,\\
    &\frac{1}{n^3} \sum_{i=1}^n \Big[ \Big(R_i \wedge R_{N_1(i)}\Big)  \Big(R_i \wedge R_{N_1(i)} - R_{N_2(i)} \wedge R_{N_3(i)}\Big) \ind \Big( i = N_1(N_1(i)) \Big) \Big] \\
    &- \E \Big[ \Var \Big[ F_Y\big(Y_1 \wedge \tY_1\big) \Biggiven X_1 \Big] \ind \Big( 1 = N_1(N_1(1)) \Big) \Big] \stackrel{\sf p}{\longrightarrow} 0,\\
    &\frac{1}{n^2(n-1)} \sum_{\substack{i,j=1\\i\neq j}}^n \Big[\ind\Big(R_i \le R_j \wedge R_{N_1(j)}\Big)\Big(R_i \wedge R_{N_1(i)} - R_{N_1(i)} \wedge R_{N_2(i)}\Big)\Big]\\
    &- \E \Big[\Cov\Big[\ind\big(Y_2 \le Y_1 \wedge \tY_1\big), F_Y\big(Y_2 \wedge \tY_2\big) \Biggiven X_1,X_2 \Big] \Big] \stackrel{\sf p}{\longrightarrow} 0,\\
    &\frac{1}{n(n-1)(n-2)} \sum_{\substack{i,j,k=1\\i\neq j \neq k}}^n \Big[\ind\Big(R_i \le R_j \wedge R_{N_1(j)}\Big)\Big(\ind\Big(R_i \le R_k \wedge R_{N_1(k)}\Big) - \ind\Big(R_{N_1(i)} \le R_k \wedge R_{N_1(k)}\Big)\Big)\Big]\\
    &- \E \Big[\Cov\Big[\ind\big(Y_3 \le Y_1 \wedge \tY_1\big), \ind\big(Y_3 \le Y_2 \wedge \tY_2\big) \Biggiven X_1,X_2,X_3 \Big] \Big] \stackrel{\sf p}{\longrightarrow} 0.
  \end{align*}
\end{lemma}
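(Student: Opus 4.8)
The plan is to prove all six convergences by a single two-stage scheme: first reduce each rank-based estimator to a statistic built from the population CDF $F_Y$ evaluated at the observations and their low-order nearest neighbors, and then establish a law of large numbers for the resulting nearest-neighbor statistic by conditioning on $\mX$, under which the $Y_j$'s become independent.

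For the reduction I would first observe that, since ranks preserve the order of the almost surely distinct $Y_i$'s, every indicator factor is already exact: $\ind(R_i \le R_j \wedge R_{N(j)}) = \ind(Y_i \le Y_j \wedge Y_{N(j)})$, so no approximation is incurred there. Only the magnitude factors require replacing, and for these $|R_i/n - F_Y(Y_i)| \le \|F_n - F_Y\|_\infty = O_p(n^{-1/2})$ uniformly by the DKW inequality. Propagating this through the bounded products — each normalized factor lies in $[0,1]$ and each summand is a product of at most two of them times a bounded indicator or count — the gap between each estimator and its $F_Y$-version is $O_p(n^{-1/2})$ and hence negligible. The only point needing care is the counting factor $\lvert\{j: j\neq i, N(j)=N(i)\}\rvert$, which is bounded by a constant depending only on $d$ (the in-degree of the nearest-neighbor graph in $\bR^d$ is uniformly bounded), so it does not inflate the error.

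For the LLN I would condition on $\mX$, under which the $Y_j$ are independent with $Y_j\sim F_{Y\mid X_j}$, and then control two pieces. For the conditional mean: because the $k$-th NN distance $\|X_i - X_{N_k(i)}\|$ vanishes (in probability) for each fixed $k$, the conditional laws $F_{Y\mid X_{N_k(i)}}$ become asymptotically indistinguishable from $F_{Y\mid X_i}$, so the neighbors $Y_{N(i)},Y_{N_2(i)},Y_{N_3(i)}$ behave as independent copies $\tY_1,\tY_1',\dots$ drawn from the local conditional. In particular the higher-order neighbors entering with a minus sign supply exactly the independent copy needed to center a product of expectations into a covariance; e.g.\ in the fifth term one checks that, conditionally on the relevant covariates, $F_Y(Y_i\wedge Y_{N(i)}) - F_Y(Y_{N(i)}\wedge Y_{N_2(i)})$ contributes precisely $\Cov[\ind(Y_2\le Y_1\wedge\tY_1), F_Y(Y_2\wedge\tY_2)\mid X_1,X_2]$ in the limit. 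This computation parallels, and can largely borrow from, the identifications already made in Lemmas~\ref{lemma:hayek1}--\ref{lemma:hayek2}, so each conditional mean, averaged over $i$ (and $j,k$), converges to the stated population expectation. For the fluctuations I would show that the conditional variance given $\mX$, together with the variance of the conditional mean (the latter bounded via the Efron--Stein inequality, since replacing a single $X_\ell$ alters only a bounded number of summands), tends to $0$. Writing each variance as a sum of covariances between summands, the local structure of the NN graph forces these covariances to vanish unless the two index tuples lie within a bounded NN-graph distance; the uniformly bounded in-degree then caps the number of non-vanishing pairs at $O(n)$ (resp.\ triples at $O(n^2)$), while each summand is bounded, yielding a variance of order $n^{-1}\to 0$.

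I expect the genuine obstacle to be the final, triple-sum term. There the centering via $R_{N(i)}$ must be shown to reproduce exactly the conditional covariance $\Cov[\ind(Y_3\le Y_1\wedge\tY_1),\ind(Y_3\le Y_2\wedge\tY_2)\mid X_1,X_2,X_3]$ — the neighbor $Y_{N(i)}$ plays the role of the independent copy of $Y_3$ that decouples $\E[AB]$ into $\E[A]\,\E[B]$ — and simultaneously the variance bookkeeping is most delicate: among the $O(n^3)$ ordered triples one must verify that the NN-graph dependence leaves only $O(n^2)$ pairs of triples correlated, which requires combining the bounded in-degree with the order constraints induced by the nested minima. Once these two points are settled for the triple sum, the remaining five estimators follow by the same but strictly simpler instance of the argument.
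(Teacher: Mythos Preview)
Your proposal is correct and follows essentially the same approach as the paper: replace ranks by $F_Y$ via the uniform CDF approximation (the paper uses Glivenko--Cantelli, you use DKW), then establish the LLN for the resulting NN statistic by a bias--variance decomposition in which the bias vanishes because $G_{X_{N_k(1)}}(t)\to G_{X_1}(t)$ in probability and the variance is $O(n^{-1})$ via Efron--Stein together with the bounded in-degree of the NN graph. The only cosmetic difference is that you split the fluctuation analysis into conditional variance given $\mX$ plus variance of the conditional mean, whereas the paper applies Efron--Stein directly to the full statistic; both routes are equivalent here.
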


On the other hand, Lemma~\ref{lemma:variance,cond} in the supplement yields
\begin{align*}
  n \Var[\E[\xi_n \given \mX]] = 36(1+O(n^{-2})) \Var\Big[h_1(X_1) + h_0(X_1)\Big] + o(1),
\end{align*}
where we define $h_0(x) = \E[h(Y) \given X=x]$ and $h
_1(x) = \E[F_Y(Y \wedge \tY) \given X=x]$ with $Y,\tY$ independently drawn from $Y \given X=x$.

The following lemma establishes approximation for each term above.
\begin{lemma}\label{lemma:var2}
  \begin{align*}
    &\frac{1}{n(n-1)(n-2)} \sum_{\substack{i,j,k=1\\i\neq j \neq k}}^n \ind\Big(R_i \le R_j \wedge R_{N_1(j)}\Big)\ind\Big(R_{N_1(i)} \le R_k \wedge R_{N_1(k)}\Big) \\
    & - \Big[\frac{1}{n^2} \sum_{i=1}^n \Big(R_i \wedge R_{N_1(i)}\Big)\Big]^2 - \Var \Big[  h_0(X_1) \Big] \stackrel{\sf p}{\longrightarrow} 0,\\
    &\frac{1}{n^2(n-1)} \sum_{\substack{i,j=1\\i\neq j}}^n \ind\Big(R_i \le R_j \wedge R_{N_1(j)}\Big)\Big(R_{N_1(i)} \wedge R_{N_2(i)}\Big) - \Big[\frac{1}{n^2} \sum_{i=1}^n \Big(R_i \wedge R_{N_1(i)}\Big)\Big]^2\\
    &- \Cov \Big[h_0(X_1), h_1(X_1)\Big] \stackrel{\sf p}{\longrightarrow} 0,\\
    &\frac{1}{n^3} \sum_{i=1}^n \Big(R_i \wedge R_{N_1(i)}\Big) \Big(R_{N_2(i)} \wedge R_{N_3(i)} \Big) - \Big[\frac{1}{n^2} \sum_{i=1}^n \Big(R_i \wedge R_{N_1(i)}\Big)\Big]^2 - \Var \Big[  h_1(X_1) \Big] \stackrel{\sf p}{\longrightarrow} 0.
  \end{align*}
\end{lemma}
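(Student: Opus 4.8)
The plan is to treat the three displays in parallel, showing in each case that the leading empirical average converges in probability to a second moment while the subtracted term $[n^{-2}\sum_i(R_i\wedge R_{N(i)})]^2$ converges to the matching product of first moments. Concretely, I will argue
\begin{align*}
&\frac{1}{n(n-1)(n-2)}\sum_{i\ne j\ne k}\ind(R_i\le R_j\wedge R_{N(j)})\,\ind(R_{N(i)}\le R_k\wedge R_{N(k)})\stackrel{\sf p}{\longrightarrow}\E[h_0(X_1)^2],\\
&\frac{1}{n^2(n-1)}\sum_{i\ne j}\ind(R_i\le R_j\wedge R_{N(j)})\,(R_{N(i)}\wedge R_{N_2(i)})\stackrel{\sf p}{\longrightarrow}\E[h_0(X_1)h_1(X_1)],\\
&\frac{1}{n^3}\sum_i(R_i\wedge R_{N(i)})(R_{N_2(i)}\wedge R_{N_3(i)})\stackrel{\sf p}{\longrightarrow}\E[h_1(X_1)^2],
\end{align*}
together with $n^{-2}\sum_i(R_i\wedge R_{N(i)})\stackrel{\sf p}{\longrightarrow}\int h\,\d F_Y$. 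The algebraic fact that lets a single squared term serve all three lines is $\E[h_0(X_1)]=\E[h_1(X_1)]=\int h\,\d F_Y$, which holds because $h_1(x)=\int G_x^2\,\d F_Y$ and $h_0(x)=\int h\,\d F_{Y\mid X=x}$ both integrate to $\int h\,\d F_Y$ against $F_X$. Granting these four limits, the identities $\Var[\cdot]=\E[\cdot^2]-(\E\,\cdot)^2$ and $\Cov[\cdot,\cdot]=\E[\cdot\,\cdot]-\E[\cdot]\E[\cdot]$ deliver the three claims.

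For each limit I would proceed in two reduction steps. First, pass from ranks to population quantities: by Glivenko--Cantelli $R_m/n=F_Y(Y_m)+o_{\P}(1)$ uniformly in $m$, so $n^{-1}(R_i\wedge R_{N(i)})$ may be replaced by $F_Y(Y_i\wedge Y_{N(i)})$ and $\ind(R_i\le R_j\wedge R_{N(j)})$ by $\ind(Y_i\le Y_j\wedge Y_{N(j)})$, with replacement errors that are negligible after the outer averaging. Second, the inner averages concentrate: conditionally on $\mX$ the labels $Y_1,\dots,Y_n$ are independent with $Y_m\sim F_{Y\mid X_m}$ and $X_{N(j)}\to X_j$, so that $n^{-1}\sum_j\ind(Y_i\le Y_j\wedge Y_{N(j)})\to h(Y_i)$ and likewise $n^{-1}\sum_k\ind(Y_{N(i)}\le Y_k\wedge Y_{N(k)})\to h(Y_{N(i)})$. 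After these reductions the three averages become $n^{-1}\sum_i h(Y_i)h(Y_{N(i)})$, $n^{-1}\sum_i h(Y_i)F_Y(Y_{N(i)}\wedge Y_{N_2(i)})$, and $n^{-1}\sum_i F_Y(Y_i\wedge Y_{N(i)})F_Y(Y_{N_2(i)}\wedge Y_{N_3(i)})$.

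The crucial point is that the indices $i,N(i),N_2(i),N_3(i)$ are distinct, so conditionally on $\mX$ the two factors of each summand depend on disjoint, hence independent, blocks of labels; taking conditional expectations and using the NN collapse $X_{N_\bullet(i)}\to X_i$ factorizes the $i$-th conditional mean into $h_0(X_i)^2$, $h_0(X_i)h_1(X_i)$, and $h_1(X_i)^2$, respectively. The law of large numbers over $\mX$ then yields the asserted limits $\E[h_0(X_1)^2]$, $\E[h_0(X_1)h_1(X_1)]$, and $\E[h_1(X_1)^2]$. To upgrade these heuristics to convergence in probability I would follow the template already used for Lemma~\ref{lemma:var1} and the conditional-variance computation in Lemma~\ref{lemma:variance,cond}: bound the expectation of each average, re-using the nearest-neighbor mean calculations behind Lemmas~\ref{lemma:hayek1}--\ref{lemma:hayek8}, and then show the variance tends to zero, after which Chebyshev's inequality closes the argument.

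The variance-to-zero step is the main obstacle. The summands share nearest-neighbor labels and, in the first two displays, the indicators $\ind(R_i\le R_j\wedge R_{N(j)})$ couple the ranks globally, so I must bound both the number and the magnitude of correlated summand pairs. For this I would invoke the structural properties of the Euclidean NN graph---its bounded in-degree (the number of points sharing a common nearest neighbor is at most a dimension-dependent constant) and its stabilization/local dependence, which render summands indexed by far-apart points asymptotically uncorrelated---so that the count of non-negligible covariance pairs stays at $O(n)$ and the overall variance is $O(1/n)$. The most delicate bookkeeping I anticipate is controlling the Glivenko--Cantelli replacement error uniformly inside the triple sum and handling the $O(1/n)$-fraction of boundary indices where the NN collapse fails, both of which I expect to contribute only $o(1)$.
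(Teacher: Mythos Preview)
Your proposal is correct and follows essentially the same route as the paper: decompose $\Var$ and $\Cov$ into second moments minus squared first moments, use Glivenko--Cantelli to pass from ranks to $F_Y$, compute the bias via the nearest-neighbor collapse $G_{X_{N_\bullet(i)}}\to G_{X_i}$, and control the variance via the bounded in-degree of the NN graph (the paper phrases this last step as Efron--Stein, exactly as in its proof of Lemma~\ref{lemma:var1}). The paper's own proof is essentially a one-line pointer back to Lemma~\ref{lemma:var1} together with the moment identities you wrote down, so your more detailed sketch is, if anything, a fuller version of the same argument.
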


Combining Lemma~\ref{lemma:var1} with Lemma~\ref{lemma:var2} and from the definition of $\hat{\sigma}^2$, the proof of \eqref{eq:var-est1} is complete.

For $\overline{\xi}_n$, notice that there is only one $i \in \zahl{n}$ such that $i=\overline{N}_1(\overline{N}_1(i))$, and $\lvert \{j: j \neq i, \overline{N}_1(j) = \overline{N}_1(i)\} \rvert = 0$ for all $i \in \zahl{n}$ except two $i$'s such that $X_i$'s are the largest two. Then the variance estimator can be simplified to $\hat{\overline{\sigma}}^2$.
\end{proof}

\newpage{}

\appendix

{\centering\huge Supplement to ``Limit theorems of Chatterjee's rank correlation'' \par}

\nb{

\section{Empirical studies}\label{sec:emp}

\subsection{Simulations}

In this section, we consider the Gaussian rotation model, where $(X,Y)$ are bivariate Gaussian with mean 0 and covariance matrix $\Sigma$, defined as
\[
 \Sigma = \bigg(
  \begin{matrix}
    1 &~~ \rho\\
    \rho &~~ 1
  \end{matrix} \bigg),~~~{\rm with}~\rho\in (
  -1,1).
\]

We consider
\begin{enumerate}[label=(\roman*)]
\item (V-LH) the asymptotic variance estimator described in Theorem~\ref{thm:var};
\item (V-B) the $m$ out of $n$ bootstrap asymptotic variance estimator described in \citet[Theorem 1]{dette2024simple};
\item (D-LH) constructing the confidence interval using the test in \eqref{eq:test};
\item (D-B) constructing the confidence interval using the variance estimator in \citet[Theorem 1]{dette2024simple} given Theorem~\ref{thm:main}.
\end{enumerate}

We compare the performance of the two methods for estimating $\xi_n$'s variance and inferring $\xi$ using various sample sizes $n=1,000, 5,000, 10,000$ and population correlations $\rho=0, 0.3, 0.5, 0.7, 0.9$. For the $m$ out of $n$ bootstrap procedure, we consider $m = \lfloor \sqrt{n} \rfloor$ as \citet[Section 3]{dette2024simple}. We set the bootstrap repetitions to be $5,000$ for each simulation and simulate $5,000$ replications to compute the square roots of the mean squared errors (RMSEs) in estimating $n\Var(\xi_n)$---of limits 0.4,  0.46, 0.51, 0.47, and 0.24 as $\rho$ changes from 0 to 0.9---as well as the empirical coverage probabilities with the nominal level $\alpha=0.05$ or 0.1.

Table \ref{tab:sim} presents the simulation results. While both the variance estimators from Theorem~\ref{thm:var} and $m$ out of $n$ bootstrap are consistent, the bootstrap variance estimator tends to perform well under finite samples, with smaller RMSE and better coverage.

{
\begin{table}
\centering
\caption{\nb{Variance estimation and empirical coverage probability}}{
\nb{\begin{tabular}{lccccccc}
\multirow{2}{*}{$\rho$} & \multirow{2}{*}{$n$}
& \multicolumn{2}{c}{Variance, RMSE} & \multicolumn{2}{c}{Coverage, $\alpha=0.05$}&  \multicolumn{2}{c}{Coverage, $\alpha=0.1$} \\
& &V-LH & V-B & D-LH & D-B & D-LH & D-B\\
0 & 1000 & 0.17 & 0.03 & 0.90 & 0.94 & 0.85 & 0.89 \\ 
& 5000 & 0.08 & 0.02 & 0.94 & 0.94 & 0.89 & 0.89 \\
& 10000 & 0.05 & 0.01 & 0.95 & 0.95 & 0.90 & 0.90 \\
0.3 & 1000 & 0.18 & 0.05 & 0.90 & 0.93 & 0.84 & 0.87 \\
& 5000 & 0.08 & 0.03 & 0.95 & 0.95 & 0.89 & 0.89 \\
& 10000 & 0.05 & 0.02 & 0.95 & 0.95 & 0.90 & 0.90 \\ 
0.5 & 1000 & 0.16 & 0.06 & 0.91 & 0.93 & 0.85 & 0.88 \\
& 5000 & 0.07 & 0.03 & 0.95 & 0.95 & 0.89 & 0.90 \\
& 10000 & 0.05 & 0.02 & 0.95 & 0.95 & 0.90 & 0.90 \\
0.7 & 1000 & 0.15 & 0.04 & 0.91 & 0.94 & 0.85 & 0.89 \\
& 5000 & 0.06 & 0.02 & 0.95 & 0.95 & 0.90 & 0.90 \\
& 10000 & 0.04 & 0.01 & 0.95 & 0.95 & 0.90 & 0.89 \\
0.9 & 1000 & 0.12 & 0.02 & 0.82 & 0.94 & 0.75 & 0.89 \\
& 5000 & 0.04 & 0.02 & 0.94 & 0.95 & 0.89 & 0.91 \\
& 10000 & 0.03 & 0.01 & 0.95 & 0.95 & 0.90 & 0.91 \\
\end{tabular}}}
\label{tab:sim}
\end{table}
}

\subsection{Real data analysis}

In this section, we apply the one-sided test described in \eqref{eq:test} to the gene expression studies. We follow the real data analysis in \citet[Section 5]{chatterjee2020new} to analyze the gene expression data from \cite{reshef2011detecting}. Instead of performing the independence test in \cite{chatterjee2020new}, i.e., testing $\xi = 0$, we identify the genes with oscillatory patterns by considering $H_0: \xi \leq \kappa$ for different $\kappa$. Through this new hypothesis testing framework, we are able to identify genes that are ``practically significant'', borrowing a terminology from \cite{bastian2024testing}.

We vary $\kappa$ from 0 to 1. The p-values of genes are obtained as described in the main text, and we select the significant genes after adjusting the p-values by the Benjamini–Hochberg procedure. Table~\ref{tab:sim2} presents the number of significant genes for different $\kappa$. One can see that, through this process, we are able to identify a substantially smaller number of genes. %by Remark~\ref{remark:inference}, we can now perform one-sided test, generalizing the independence test in \cite{chatterjee2020new}.

\begin{table}
  \caption{\nb{$\kappa$ values and corresponding number of significant genes}}
  \centering
  \nb{\begin{tabular}{ccccccccccc}
  \textbf{$\kappa$} & 0.00 & 0.05 & 0.10 & 0.15 & 0.20 & 0.25 & 0.30 & 0.35 & 0.40 & 0.45 \\
  {\rm count} & 1187 & 846 & 579 & 350 & 217 & 71 & 13 & 8 & 3 & 0 \\
  \end{tabular}}
  \label{tab:sim2}
\end{table}
}

\nb{
\section{Sobol' indices}\label{sec:sobol}

The Sobol' indices were first introduced in \cite{sobol2001global}. Consider $X=(X_1,\ldots,X_d)$ and $Y=f(X_1,\ldots,X_d)$ for a measurable function $f$. For a subset $\fu \subset \zahl{d}$ and $\fu' = \zahl{d} \setminus \fu$, let $X^\fu=(X_i,i\in\fu)$ and $X^{\fu'}=(X_i,i\in\fu')$. Then the corresponding Sobol' indices are defined as:
\begin{align*}
  S^\fu:= \frac{\Var\{\E[Y\given X^\fu]\}}{\Var[Y]},~~~~S^{\fu'}:=\frac{\Var\{\E[Y\given X^{\fu'}]\}}{\Var[Y]}.
\end{align*}

To estimate the Sobol' indices, we consider the estimator in \cite{gamboa2022global} for the case when $\lvert \fu \rvert=1$,
\begin{align*}
  \overline{\xi}_n^\fu:= \frac{\frac{1}{n} \sum_{i=1}^n Y_i Y_{\overline{N}^\fu_1(i)} - (\frac{1}{n}\sum_{i=1}^n Y_i)^2}{\frac{1}{n}\sum_{i=1}^n Y_i^2 - (\frac{1}{n}\sum_{i=1}^n Y_i)^2},
\end{align*}
where we let $\overline{N}^\fu_1(i)$ index the right NN of $X^\fu_i$ among $\{X^\fu_j\}_{j=1}^n$, with $\overline{N}^\fu_1(i) =k$ if $X_i$ is the largest and $X_k$ is the smallest.

We can generalize the estimator to the case when $\lvert \fu \rvert \ge1$ as in \cite{azadkia2019simple}. The estimator can be defined as
\begin{align*}
  \xi_n^\fu:= \frac{\frac{1}{n} \sum_{i=1}^n Y_i Y_{N^\fu_1(i)} - (\frac{1}{n}\sum_{i=1}^n Y_i)^2}{\frac{1}{n}\sum_{i=1}^n Y_i^2 - (\frac{1}{n}\sum_{i=1}^n Y_i)^2},
\end{align*}
where we let $N^\fu_1(i)$ index the NN of $X^\fu_i$ among $\{X^\fu_j\}_{j=1}^n$.

As both $\xi_n^\fu$ and $\overline{\xi}_n^\fu$ are from the class of nearest neighbor statistics, their asymptotic theories can be established in a similar way as Chatterjee's rank correlation. Here we provide asymptotic theories for $\xi_n^\fu$ estimating $S^\fu$, and those for $\overline{\xi}_n^\fu$ are nearly the same.

To establish the asymptotic theory for $\xi_n^\fu$, we first consider the joint central limit theorem of the numerator and denominator of $\xi_n^\fu$.

\begin{theorem}\label{thm:sobol}
  Assume $f$ is bounded. Then we have
  \begin{align*}
    \sqrt{n} \left(\begin{bmatrix}\frac{1}{n} \sum_{i=1}^n Y_i Y_{N^\fu_1(i)} - \Big(\frac{1}{n}\sum_{i=1}^n Y_i\Big)^2 \\ \frac{1}{n}\sum_{i=1}^n Y_i^2 - (\frac{1}{n}\sum_{i=1}^n Y_i)^2
    \end{bmatrix}
    - \begin{bmatrix}\Var\{\E[Y\given X^\fu]\} + B^\fu\\ \Var[Y]
    \end{bmatrix} \right) \stackrel{\sf d}{\longrightarrow} N(0,\Sigma),
  \end{align*}
  where the explicit form of $\Sigma$ is in the proof of Theorem~\ref{thm:sobol}, and the bias term $B^\fu = \E[Y_1 Y_{N^\fu_1(1)}] - \E\{(\E[Y\given X^\fu])^2\}$. A consistent estimator of $\hat\Sigma$ exists with the explicit form in the proof of Theorem~\ref{thm:sobol}.
\end{theorem}

For the bias term $B^\fu$, we have the following lemma.
\begin{lemma}\label{lemma:sobol}
  Assume $\lvert \fu \rvert = 1$, the support of $X^\fu$ is compact, $f$ and its first derivative with respect to $X^\fu$ are bounded. Then $B^\fu = O(n^{-1})$.
\end{lemma}

Combining Theorem~\ref{thm:sobol} and Lemma~\ref{lemma:sobol} yields the following theorem.
\begin{theorem}\label{thm:sobol0}
Assume $\lvert \fu \rvert = 1$, the support of $X^\fu$ is compact, $f$ and its first derivative with respect to $X^\fu$ are bounded. Then we have
\begin{align*}
  \sqrt{n} (\xi_n^\fu - S^\fu) \stackrel{\sf d}{\longrightarrow} N(0,\sigma^2),
\end{align*}
where $\sigma^2 = (1, -S^\fu)^\top \Sigma (1, -S^\fu)/(\Var[Y])^2$. Let $\hat\sigma^2 = (1, -\xi_n^\fu)^\top \hat\Sigma (1, -\xi_n^\fu)/[\frac{1}{n}\sum_{i=1}^n Y_i^2 - (\frac{1}{n}\sum_{i=1}^n Y_i)^2]^2$. Then $\hat\sigma^2 \stackrel{\sf p}{\longrightarrow} \sigma^2$.
\end{theorem}
}

\section{Proofs of the results in the main paper}\label{sec:proof}

In the sequel, denote the law of $Y$ by $\mu$, and the conditional law of $Y$ given $X=x$ by $\mu_x$. 

\subsection{Proof of Proposition~\ref{prop:bias}}

\begin{proof}[Proof of Proposition~\ref{prop:bias}\ref{prop:bias,null}]
Lemma 6.1 in \cite{lin2021boosting} showed $\E \big[\min\big\{R_1,R_2\big\} \big] = (n+1)/3$. Then if $X$ and $Y$ are independent,
\begin{align*}
  \E[\xi_n] = \frac{6n}{n^2-1} \E \big[\min\big\{R_1, R_{N_1(1)}\big\} \big] - \frac{2n+1}{n-1} = \frac{6n}{n^2-1} \E \big[\min\big\{R_1, R_2\big\} \big] - \frac{2n+1}{n-1} = -\frac{1}{n-1}.
\end{align*}

When $d=1$, there exists only one index $i \in \zahl{n}$ such that $\overline{N}_1(i) = i$. Then
\begin{align*}
  & \E[\overline{\xi}_n] = 1 - \frac{3}{n^2-1} \E \Big[\sum_{i=1}^n \Big\lvert R_{\overline{N}_1(i)} - R_i \Big\rvert \Big] = 1 - \frac{3(n-1)}{n^2-1} \E \Big[\Big\lvert R_2 - R_1 \Big\rvert \Big]\\
  = & 1 - \frac{3(n-1)}{n^2-1} \Big( \E \big[R_1 \big] + \E \big[R_2 \big] - 2 \E \big[\min\big\{R_1, R_2\big\} \big] \Big) = 1 - \frac{3(n-1)}{n^2-1} \frac{(n+1)}{3} = 0.
\end{align*}
This completes the proof.
\end{proof}

\begin{proof}[Proof of Proposition~\ref{prop:bias}\ref{prop:bias,alter}]
Notice that for any $i \in \zahl{n}$, $\min\big\{R_i, R_{N_1(i)}\big\}=\sum_{k=1}^n \ind\big(Y_k \le Y_i \wedge Y_{N_1(i)}\big)$. From \eqref{eq:xin} and since $[(X_i,Y_i)]_{i=1}^n$ are i.i.d., we have
\begin{align*}
   \E [\xi_n] =& \frac{6}{n^2-1} \E \Big[\sum_{i=1}^n \min\big\{R_i, R_{N_1(i)}\big\} \Big] - \frac{2n+1}{n-1} \\
   =& \frac{6n}{n^2-1} \E \Big[ \min\big\{R_1, R_{N_1(1)}\big\} \Big] - \frac{2n+1}{n-1}\\
  = &  \frac{6N_1(N-1)}{n^2-1} \E \Big[\ind\big(Y_2 \le Y_1 \wedge Y_{N_1(1)}\big) \Big] + \frac{6n}{n^2-1}  \E \Big[\ind\big(Y_1 \le Y_1 \wedge Y_{N_1(1)}\big) \Big] - \frac{2n+1}{n-1}\\
  = & 6 \E \Big[\ind\big(Y_2 \le Y_1 \wedge \tY_1\big) \Big] -2 + 6 \Big(\E \Big[\ind\big(Y_2 \le Y_1 \wedge Y_{N_1(1)}\big) \Big] - \E \Big[\ind\big(Y_2 \le Y_1 \wedge \tY_1\big) \Big]\Big) \\
  & - \frac{6}{n+1} \E \Big[\ind\big(Y_2 \le Y_1 \wedge Y_{N_1(1)}\big) \Big] + \frac{6n}{n^2-1}  \E \Big[\ind\big(Y_1 \le Y_1 \wedge Y_{N_1(1)}\big) \Big] - \frac{3}{n-1}\\
  =: & 6 \E \Big[\ind\big(Y_2 \le Y_1 \wedge \tY_1\big) \Big] -2 + Q.
  \yestag\label{eq:bias1}
\end{align*}

For the first term in \eqref{eq:bias1}, 
\[
  \E \Big[\ind\big(Y_2 \le Y_1 \wedge \tY_1\big) \Big] = \E \Big[\int G_{X_1}^2(t) \d \mu_{X_2}(t) \Big] =  \int \E[G_X^2(t)] \d \mu(t).
\]
Noticing that $\int G^2(t) \d \mu(t) = 1/3$, one has
\[
  6 \E \Big[\ind\big(Y_2 \le Y_1 \wedge \tY_1\big) \Big] -2 = 6 \int \Big( \E[G_X^2(t)] - G^2(t) \Big) \d \mu(t).
\]

On the other hand, it is ready to check
\[
  \int \Var\big\{\E\big[\ind\big(Y\geq t\big) \given X \big] \big\} \d \mu(t) = \int \Big( \E[G_X^2(t)] - G^2(t) \Big) \d \mu(t),
\]
and
\[
  \int \Var\big\{\ind\big(Y\geq t\big)\big\}\d \mu(t) = \frac{1}{6}.
\]

Accordingly, combining \eqref{eq:xi} and \eqref{eq:bias1}, we obtain
\[
  \E[\xi_n] - \xi = \E[\xi_n] - 6 \E \Big[\ind\big(Y_2 \le Y_1 \wedge \tY_1\big) \Big] + 2 = Q.
\]

Let $N^{-2}(1)$ index the NN of $X_1$ among $\{X_i\}_{i=1}^n \setminus \{X_2\}$. Using the definition of $Q$ and noticing that the indicator function is bounded by 1, we have
\begin{align*}
  \lvert Q \rvert \lesssim & \Big\lvert \E \Big[\ind\big(Y_2 \le Y_1 \wedge Y_{N_1(1)}\big) \Big] - \E \Big[\ind\big(Y_2 \le Y_1 \wedge \tY_1 \big) \Big] \Big\rvert + \frac{1}{n}\\
  \le &  \Big\lvert \E \Big[\ind\big(Y_2 \le Y_1 \wedge Y_{N^{-2}(1)}\big) \Big] - \E \Big[\ind\big(Y_2 \le Y_1 \wedge \tY_1 \big) \Big] \Big\rvert + 2 \P(N_1(1)=2) + \frac{1}{n}\\
  = & \Big\lvert \E \Big[F_Y\big(Y_1 \wedge Y_{N^{-2}(1)}\big) \Big] - \E \Big[F_Y\big(Y_1 \wedge \tY_1 \big) \Big] \Big\rvert + 2 \P(N_1(1)=2) + \frac{1}{n}\\
  \le & \Big\lvert \E \Big[F_Y\big(Y_1 \wedge Y_{N_1(1)}\big) \Big] - \E \Big[F_Y\big(Y_1 \wedge \tY_1 \big) \Big] \Big\rvert + 4 \P(N_1(1)=2) + \frac{1}{n}.
\end{align*}

For the second term above, $\P(N_1(1)=2) = 1/(n-1)$. For the first term above, recall that $G_X(t)= \P\big(Y \ge t \given X\big)$. Then since $0 \le G_X(t) \le 1$ holds for any $t \in \bR$, one has
\begin{align*}
   \Big\lvert \E \Big[F_Y\big(Y_1 \wedge Y_{N_1(1)}\big) \Big] - \E \Big[F_Y\big(Y_1 \wedge \tY_1 \big) \Big] \Big\rvert =& \Big\lvert \int \Big(\E \Big[G_{X_1}(t) G_{X_{N_1(1)}}(t) \Big] - \E \Big[G_{X_1}^2(t) \Big] \Big) \d \mu(t)\Big\rvert\\
  \le & \int \E \Big\lvert G_{X_{N_1(1)}}(t) - G_{X_1}(t) \Big\rvert \d \mu(t).
\end{align*}

In the same way as the proof of Theorem 4.1 in \cite{azadkia2019simple}, essentially Lemma 14.1 and the proof of Lemma 14.2 therein, and from the assumptions, one could deduce
\[
  \int \E \Big\lvert G_{X_{N_1(1)}}(t) - G_{X_1}(t) \Big\rvert \d \mu(t) \lesssim \frac{(\log n)^{d+\beta+1 + \ind(d=1)}}{n^{1/d}},
\]
and the proof for $\xi_n$ is thus complete.

Similar analyses can be performed for $\overline{\xi}_n$ as well and details are accordingly omitted.
\end{proof}

\subsection{Proof of Proposition~\ref{prop:variance}}

Recall that $h_0(x) = \E[h(Y) \given X=x] = \int \E[G_X^2(t)] \d \mu_x(t)$ and let us further define 
\[
h_1(x):= \E[F_Y(Y \wedge \tY) \given X=x] = \int F_Y(t \wedge t') \d \mu_x(t) \d \mu_x(t'). 
\]
The following lemma about $\E[\xi_n^* \given \mX]$ will be used.

\begin{lemma}\label{lemma:variance,cond} We have
  \begin{align*} 
    \lim_{n \to \infty} \Big\{ n \Var \Big[ \frac{6n}{n^2-1} \sum_{i=1}^n \Big(h_1(X_i) + h_0(X_i)\Big) - \E[\xi_n^* \given \mX] \Big]\Big\} = 0.
  \end{align*}
\end{lemma}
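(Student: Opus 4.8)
The plan is to first reduce the statement to a variance bound for a sum of \emph{nearest-neighbor functionals of $\mX$ alone}, and then control that variance by the Efron--Stein inequality. Conditioning on $\mX$ and using that $Y_i\mid\mX\sim\mu_{X_i}$ are independent across $i$ with $N(i)\neq i$, one computes $\E[h(Y_i)\mid\mX]=h_0(X_i)$ and $\E[\min\{F_Y(Y_i),F_Y(Y_{N(i)})\}\mid\mX]=\psi(X_i,X_{N(i)})$, where $\psi(x,x'):=\int G_x(s)G_{x'}(s)\,\d\mu(s)$; the last identity follows from $F_Y(t\wedge t')=\P(Y_0\le t,\,Y_0\le t')$ with $Y_0\sim\mu$ independent. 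Since $h_1(x)=\psi(x,x)$, the $h_0$ terms cancel and the quantity inside the variance equals $\frac{6n}{n^2-1}\sum_{i=1}^n\Delta_i$ with
\[
\Delta_i:=\psi(X_i,X_i)-\psi(X_i,X_{N(i)})=\int G_{X_i}(s)\big(G_{X_i}(s)-G_{X_{N(i)}}(s)\big)\,\d\mu(s).
\]
Because $0\le G_x\le 1$, we have the pointwise bound $|\Delta_i|\le\rho_i:=\int|G_{X_i}(s)-G_{X_{N(i)}}(s)|\,\d\mu(s)\le 1$. As $(6n/(n^2-1))^2=36/n^2+O(n^{-4})$, the target reduces to showing $\Var\big[\sum_{i=1}^n\Delta_i\big]=o(n)$.

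The single analytic input is the smallness of the $\Delta_i$: I claim $\E[\rho_1]=\E[\rho(X_1,X_{N(1)})]\to 0$. By Fubini this equals $\int\E|G_{X_{N(1)}}(s)-G_{X_1}(s)|\,\d\mu(s)$, and for each fixed $s$ the map $x\mapsto G_x(s)\in[0,1]$ is bounded and measurable, so $\E|G_{X_{N(1)}}(s)-G_{X_1}(s)|\to0$ by the $L^1$-consistency of nearest-neighbor evaluation (the Stone-/Lebesgue-differentiation ingredient underlying the consistency of $\xi_n$ established in \citet{azadkia2019simple}, valid for an arbitrary law of $X$). Dominated convergence over $s$ (integrand $\le1$) then gives $\E[\rho_1]\to0$, whence $\E[\Delta_1^2]\le\E[\rho_1^2]\le\E[\rho_1]\to0$ and likewise $\E[\Delta_1^4]\le\E[\rho_1]\to0$.

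For the variance I would apply Efron--Stein: let $\mX_\ell$ replace $X_\ell$ by an independent copy $\tX_\ell$, let $\tN_\ell(i)$ be the NN index in $\mX_\ell$, and write $S=\sum_i\Delta_i$ with $S_\ell$ the resampled sum. A term $\Delta_i$ changes only if $i=\ell$, $N(i)=\ell$, or $\tN_\ell(i)=\ell$; collect these in a set $A_\ell$, whose cardinality is at most $1+2c_d$ since the in-degree of any point in a Euclidean NN graph is bounded by a dimensional constant $c_d$. Cauchy--Schwarz gives $(S-S_\ell)^2\le(1+2c_d)\sum_{i\in A_\ell}(\Delta_i-\Delta_i^{(\ell)})^2\le 2(1+2c_d)\sum_{i\in A_\ell}\big(\Delta_i^2+(\Delta_i^{(\ell)})^2\big)$, where $\Delta_i^{(\ell)}$ is the value in $\mX_\ell$. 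Summing over $\ell$, the contributions from $i=\ell$ and from $N(i)=\ell$ telescope, $\sum_\ell\E[\Delta_\ell^2]=n\E[\Delta_1^2]$ and $\sum_\ell\E\big[\sum_{i:N(i)=\ell}\Delta_i^2\big]=\E[\sum_i\Delta_i^2]=n\E[\Delta_1^2]$, both $o(n)$; the $\Delta_i^{(\ell)}$ pieces with $\tN_\ell(i)=\ell$ are handled identically because $\mX_\ell\stackrel{\sf d}{=}\mX$, so by symmetry each $\ell$ contributes the common value $\E\big[\sum_{i:N(i)=1}\rho_i^2\big]=o(1)$, summing to $o(n)$.

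The one genuinely delicate piece — and the step I expect to be the main obstacle — is the remaining contribution $\sum_\ell\E\big[\sum_{i:\tN_\ell(i)=\ell}\Delta_i^2\big]=\E\big[\sum_i\Delta_i^2\,K_i\big]$, where $K_i:=|\{\ell:\tN_\ell(i)=\ell\}|$ counts the resampled points that would become the new NN of $X_i$. Here $\Delta_i$ is the \emph{original} term, so its multiplicity $K_i$ is a global NN-graph quantity rather than a bounded local degree. I would control it by Cauchy--Schwarz, $\E[\Delta_i^2K_i]\le(\E[\Delta_1^4])^{1/2}(\E[K_1^2])^{1/2}$, combining $\E[\Delta_1^4]\to0$ with the uniform bound $\E[K_1^2]=O(1)$. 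The latter is standard NN geometry: conditionally on $\mX$, $\E[K_1\mid\mX]\le(n-1)\,P_X\big(B(X_1,r_1)\big)+P_X\big(B(X_1,r_1')\big)$ with $r_1,r_1'$ the first and second NN radii of $X_1$, and $(n-1)P_X(B(X_1,r_1))$ has $O(1)$ moments because the NN ball carries probability mass of order $1/n$ (the same exchangeability/order-statistics estimate underlying \citet{MR2435859}). Combining the four contributions yields $\sum_\ell\E[(S-S_\ell)^2]=o(n)$, hence $\Var[S]=o(n)$, which gives the claim.
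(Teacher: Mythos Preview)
Your argument is correct and follows the same strategy as the paper: both reduce to $\Delta_i=h_1(X_i)-\Phi(X_i,X_{N(i)})$ (the paper writes $\Phi$ for your $\psi$), then apply Efron--Stein together with the bounded in-degree of the Euclidean NN graph. The paper fixes $\ell$ and shows $\E[(S-S_\ell)^2]=o(1)$ by splitting into the same three pieces you identify, deferring the ``sum over $i$ with $N(i)=\ell$ or $\tN_\ell(i)=\ell$'' part to the proof technique of its Lemma~2.8; you instead sum over $\ell$ first, which produces the multiplicity $K_i$ explicitly. This is a bookkeeping difference rather than a genuinely different route.

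One point deserves tightening. Your bound $\E[K_1^2]=O(1)$ is true, but the sketch you give needs two small fixes. First, use the \emph{open} NN ball: the event $\tN_\ell(1)=\ell$ (for $\ell\neq N(1)$) is $\{\tilde X_\ell\in B^\circ(X_1,r_1)\}$, so the relevant quantity is $p^\circ:=P_X(B^\circ(X_1,r_1))$; with the closed ball the moment bound can fail when distance ties have positive probability, which is not ruled out merely by continuity of $F_{X,Y}$. Second, the clean justification of ``$(n-1)p^\circ$ has $O(1)$ moments'' is the disjointness/exchangeability estimate: for independent copies $X_0,X_0'$ of $X$, the events that a fixed $k$-subset of $\{X_0,X_0',X_2,\dots,X_n\}$ consists of the points strictly closest to $X_1$ are pairwise disjoint and equiprobable, so $\E[(p^\circ)^k]\le 1/\binom{n-1+k}{k}$, whence $\E[((n-1)p^\circ)^k]=O(1)$. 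Combining with $K_1\le 1+\mathrm{Bin}(n-2,p^\circ)$ conditionally on $\mX$ gives $\E[K_1^2]=O(1)$, and your Cauchy--Schwarz step $\E[\Delta_1^2K_1]\le(\E[\Delta_1^4])^{1/2}(\E[K_1^2])^{1/2}=o(1)$ goes through. (By the swap symmetry $X_\ell\leftrightarrow\tilde X_\ell$ you noted, the seemingly missing term $\sum_\ell\E[\sum_{i:N(i)=\ell}(\Delta_i^{(\ell)})^2]$ equals the same $K$-term, so nothing is left over.)
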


\nb{
\begin{proof}[Proof of Proposition~\ref{prop:variance}~\ref{prop:variance,exist}]
By the proof of Theorem~\ref{thm:var}, we have explicit representations for $n \E[\Var[\xi_n \given \mX]]$ and $n \Var[\E[\xi_n \given \mX]]$, up to a small order term. By Lemma 20.6 in \cite{biau2015lectures} and the dominated convergence theorem, the limits of $n \E[\Var[\xi_n \given \mX]]$ and $n \Var[\E[\xi_n \given \mX]]$ exist, and then the proof is complete. Same results also hold for $n \Var[\overline{\xi}_n]$.
\end{proof}
}

\begin{proof}[Proof of Proposition~\ref{prop:variance}~\ref{prop:variance,lower}]
From \eqref{eq:hayekmain11},
\begin{align*}
  n\Var[\xi_n] \ge n \E[\Var[\xi_n \given \mX]] = \frac{36n^4}{(n^2-1)^2} \Big(\E[T_1] + \E[T_2] + \E[T_3] + \E[T_4] + \E[T_5]\Big).
\end{align*}

Using Lemmas~\ref{lemma:hayek1} and \ref{lemma:hayek2}, and then noticing that for any $X_1 \in \bR^d$, we have 
\[
\Cov[F_Y(Y_1 \wedge \tY_1), F_Y(\tY_1 \wedge \tY_1' ) \given X_1 ] \ge 0,
\]
one can deduce 
\begin{align*}
  n\Var[\xi_n] \ge & 36(1+O(n^{-2})) \Big\{\E \Big[ \Var \Big[ F_Y\big(Y_1 \wedge \tY_1\big) \Biggiven X_1 \Big] \Big] \\
  & + 2\E \Big[\Cov\Big[F_Y\big(Y_1 \wedge \tY_1\big), F_Y\big(\tY_1 \wedge \tY_1' \big) \Biggiven X_1 \Big] \ind \Big( 1 \neq N_1(N_1(1)) \Big) \Big] \\
  & + \E \Big[ \Var \Big[ F_Y\big(Y_1 \wedge \tY_1\big) \Biggiven X_1 \Big] \ind \Big( 1 = N_1(N_1(1)) \Big) \Big]\\
  & + 4 \E \Big[\Cov\Big[\ind\big(Y_2 \le Y_1 \wedge \tY_1\big), F_Y\big(Y_2 \wedge \tY_2\big) \Biggiven X_1,X_2 \Big] \Big] \\
  & + \E \Big[\Cov\Big[\ind\big(Y_3 \le Y_1 \wedge \tY_1\big), \ind\big(Y_3 \le Y_2 \wedge \tY_2\big) \Biggiven X_1,X_2,X_3 \Big] \Big] \Big\} + o(1).
\end{align*}
For the last term above, recalling that $h(t) = \E[G_X^2(t)]$ from \eqref{eq:h}, one has
\begin{align*}
  & \E \Big[\Cov\Big[\ind\big(Y_3 \le Y_1 \wedge \tY_1\big), \ind\big(Y_3 \le Y_2 \wedge \tY_2\big) \Biggiven X_1,X_2,X_3 \Big] \Big]\\
  = & \E \Big[ \int G_{X_1}^2(t) G_{X_2}^2(t) \d \mu_{X_3}(t) - \Big(\int G_{X_1}^2(t) \d \mu_{X_3}(t) \Big) \Big(\int G_{X_2}^2(t) \d \mu_{X_3}(t) \Big) \Big]\\
  = & \E \Big[ \int h^2(t) \d \mu_{X_3}(t) - \Big(\int h(t) \d \mu_{X_3}(t) \Big) \Big(\int h(t) \d \mu_{X_3}(t) \Big) \Big]\\
  = & \E \Big[ \Var \Big[ h(Y_1) \Biggiven X_1 \Big] \Big].
\end{align*}
For the second last term, 
\begin{align*}
  &\E \Big[\Cov\Big[\ind\big(Y_2 \le Y_1 \wedge \tY_1\big), F_Y\big(Y_2 \wedge \tY_2\big) \Biggiven X_1,X_2 \Big] \Big]\\
  = & \E \Big[ \int G_{X_1}^2(t) F_Y(t \wedge t') \d \mu_{X_2}(t) \d \mu_{X_2}(t') - \Big(\int G_{X_1}^2(t) \d \mu_{X_2}(t) \Big) \Big(\int F_Y(t \wedge t') \d \mu_{X_2}(t) \d \mu_{X_2}(t') \Big) \Big]\\
  = & \E \Big[ \int h(t) F_Y(t \wedge t') \d \mu_{X_2}(t) \d \mu_{X_2}(t') - \Big(\int h(t) \d \mu_{X_2}(t) \Big) \Big(\int F_Y(t \wedge t') \d \mu_{X_2}(t) \d \mu_{X_2}(t') \Big) \Big]\\
  = & \E \Big[\Cov\Big[h\big(Y_1\big), F_Y\big(Y_1 \wedge \tY_1\big) \Biggiven X_1\Big] \Big].
\end{align*}
We then have
\begin{align*}
  n\Var[\xi_n] \ge & 36(1+O(n^{-2})) \Big\{\E \Big[ \Var \Big[ F_Y\big(Y_1 \wedge \tY_1\big) \Biggiven X_1 \Big] \Big] \\
  & + \E \Big[\Big(2\Cov\Big[F_Y\big(Y_1 \wedge \tY_1\big), F_Y\big(\tY_1 \wedge \tY_1' \big) \Biggiven X_1 \Big] \Big) \bigwedge \Var \Big[ F_Y\big(Y_1 \wedge \tY_1\big) \Biggiven X_1 \Big] \Big] \\
  & + 4 \E \Big[\Cov\Big[h\big(Y_1\big), F_Y\big(Y_1 \wedge \tY_1\big) \Biggiven X_1\Big] \Big] + \E \Big[ \Var \Big[ h(Y_1) \Biggiven X_1 \Big] \Big] \Big\} + o(1).
  \yestag\label{eq:variance12}
\end{align*}

Notice that
\begin{align*}
  & 2 \Var \Big[ F_Y\big(Y_1 \wedge \tY_1\big) \Biggiven X_1 \Big] + 4 \Cov\Big[h\big(Y_1\big), F_Y\big(Y_1 \wedge \tY_1\big) \Biggiven X_1\Big] + \Var \Big[ h(Y_1) \Biggiven X_1 \Big]\\
  = & 2 \Var \Big[ F_Y\big(Y_1 \wedge \tY_1\big) + \frac{1}{2} h(Y_1) + \frac{1}{2} h(\tY_1) \Biggiven X_1 \Big],
  \yestag\label{eq:variance1}
\end{align*}
and
\begin{align*}
  & \Var \Big[ F_Y\big(Y_1 \wedge \tY_1\big) \Biggiven X_1 \Big] + 2\Cov\Big[F_Y\big(Y_1 \wedge \tY_1\big), F_Y\big(\tY_1 \wedge \tY_1' \big) \Biggiven X_1 \Big] \\
  & + 4 \Cov\Big[h\big(Y_1\big), F_Y\big(Y_1 \wedge \tY_1\big) \Biggiven X_1\Big] + \Var \Big[ h(Y_1) \Biggiven X_1 \Big]\\
  = & \frac{1}{3} \Var \Big[ F_Y\big(Y_1 \wedge \tY_1\big) + F_Y\big(Y_1 \wedge \tY_1'\big) + F_Y\big(\tY_1 \wedge \tY_1'\big) + h(Y_1) + h(\tY_1) + h(\tY_1')\Biggiven X_1 \Big].
  \yestag\label{eq:variance2}
\end{align*}

{\bf Case I.} If $Y$ is not a measurable function of $X$ almost surely, then
\[
  \E \Big[ \Var \Big[ F_Y\big(Y_1 \wedge \tY_1\big) + \frac{1}{2} h(Y_1) + \frac{1}{2} h(\tY_1) \Biggiven X_1 \Big] \Big] > 0,
\]
and
\[
  \E \Big[ \Var \Big[ F_Y\big(Y_1 \wedge \tY_1\big) + F_Y\big(Y_1 \wedge \tY_1'\big) + F_Y\big(\tY_1 \wedge \tY_1'\big) + h(Y_1) + h(\tY_1) + h(\tY_1')\Biggiven X_1 \Big] \Big] > 0.
\]
Combining the above two bounds with \eqref{eq:variance12}, \eqref{eq:variance1}, and \eqref{eq:variance2}  then yields
\[
  \liminf_{n \to \infty} \Big\{n \Var[\xi_n]\Big\} > 0.
\]

{\bf Case II.} If $Y$ is a measurable function of $X$ almost surely, it is ready to check that
\[
\lim_{n \to \infty} \E[T_1] = \lim_{n \to \infty} \E[T_2] = \lim_{n \to \infty} \E[T_3] = \lim_{n \to \infty} \E[T_4] = \lim_{n \to \infty} \E[T_5] = 0 
\]
using Lemmas~\ref{lemma:hayek1} and \ref{lemma:hayek2} since the variance and the covariance terms there are zero conditional on $\mX$. Accordingly, one has 
\[
\lim_{n \to \infty} n\E[\Var[\xi_n \given \mX]] = 0
\]
invoking \eqref{eq:hayekmain11}. 

It remains to establish $\lim_{n \to \infty} n\Var[\E[\xi_n \given \mX]] = 0$. From Theorem~\ref{thm:hayek}, we have 
\[
\limsup_{n \to \infty} n\Var[\E[\xi_n - \xi_n^* \given \mX]] \le \limsup_{n \to \infty} n\Var[\xi_n - \xi_n^*] = 0. 
\]
Then it suffices to establish $\lim_{n \to \infty} n\Var[\E[\xi_n^* \given \mX]] = 0$.

From Lemma~\ref{lemma:variance,cond}, we consider $\Var [ \sum_{i=1}^n (h_1(X_i) + h_0(X_i))]$. Let $Y = \phi(X)$ almost surely with $\phi$ to be a measurable function. Then 
\[
h_1(X_i) = \E[F_Y(Y \wedge \tY) \given X=X_i] = F_Y(\phi(X_i)) 
\]
and 
\[
h_0(X_i) = \E[h(Y) \given X=X_i] = h(\phi(X_i)). 
\]
Notice that for any $t \in \bR$, 
\[
h(t) = \E [G_X^2(t)] = \E [\P(Y \ge t \given X)]^2 = \E[\ind(\phi(X) \ge t)] = \P(\phi(X) \ge t), 
\]
and 
\[
F_Y(t) = \P(Y \le t) = \P(\phi(X) \le t). 
\]
We then have 
\begin{align*}
h_1(X_i) + h_0(X_i) &= F_Y(\phi(X_i)) + h(\phi(X_i)) = \P(\phi(X) \le \phi(X_i)) + \P(\phi(X) \ge \phi(X_i)) \\
&= 1+\P(\phi(X) = \phi(X_i)) = 1+\P(Y=\phi(X_i)) = 1
\end{align*}
from the continuity of $F_Y$. Then $\Var [ \sum_{i=1}^n (h_1(X_i) + h_0(X_i))]=0$ and then $\lim_{n \to \infty} n\Var[\E[\xi_n^* \given \mX]] = 0$ from Lemma~\ref{lemma:variance,cond}.

The two claims for $\overline{\xi}_n$ can be established in the same way by simply replacing $N_1(\cdot)$ by $\overline{N}_1(\cdot)$.
\end{proof}

\begin{proof}[Proof of Proposition~\ref{prop:variance}~\ref{prop:variance,upper}]

Invoking \eqref{eq:hayekmain11} and Lemmas \ref{lemma:hayek1} and \ref{lemma:hayek2},
\begin{align*}
  n \E[\Var[\xi_n \given \mX]] = & 36(1+O(n^{-2})) \Big\{\E \Big[ \Var \Big[ F_Y\big(Y_1 \wedge \tY_1\big) \Biggiven X_1 \Big] \Big] \\
  & + 2\E \Big[\Cov\Big[F_Y\big(Y_1 \wedge \tY_1\big), F_Y\big(\tY_1 \wedge \tY_1' \big) \Biggiven X_1 \Big] \ind \Big( 1 \neq N_1(N_1(1)) \Big) \Big] \\
  & + \E \Big[\Cov\Big[F_Y\big(Y_1 \wedge \tY_1\big), F_Y\big(\tY_1 \wedge \tY_1' \big) \Biggiven X_1 \Big] \Big\lvert \Big\{j: j \neq 1, N_1(j) = N_1(1)\Big\} \Big\rvert \Big] \\
  & + \E \Big[ \Var \Big[ F_Y\big(Y_1 \wedge \tY_1\big) \Biggiven X_1 \Big] \ind \Big( 1 = N_1(N_1(1)) \Big) \Big]\\
  & + 4 \E \Big[\Cov\Big[\ind\big(Y_2 \le Y_1 \wedge \tY_1\big), F_Y\big(Y_2 \wedge \tY_2\big) \Biggiven X_1,X_2 \Big] \Big] \\
  & + \E \Big[\Cov\Big[\ind\big(Y_3 \le Y_1 \wedge \tY_1\big), \ind\big(Y_3 \le Y_2 \wedge \tY_2\big) \Biggiven X_1,X_2,X_3 \Big] \Big] \Big\} + o(1).
\end{align*}

From \eqref{eq:variance1} and \eqref{eq:variance2}, one deduces
\begin{align*}
  & n \E[\Var[\xi_n \given \mX]] \\
  = & 36(1+O(n^{-2})) \Big\{2 \E \Big[ \Var \Big[ F_Y\big(Y_1 \wedge \tY_1\big) + \frac{1}{2} h(Y_1) + \frac{1}{2} h(\tY_1) \Biggiven X_1 \Big] \ind \Big( 1 = N_1(N_1(1)) \Big) \Big] \\
  & + 3\E \Big[\Var \Big[ \frac{1}{3}F_Y\big(Y_1 \wedge \tY_1\big) + \frac{1}{3}F_Y\big(Y_1 \wedge \tY_1'\big) + \frac{1}{3}F_Y\big(\tY_1 \wedge \tY_1'\big) + \frac{1}{3}h(Y_1) + \frac{1}{3}h(\tY_1) + \frac{1}{3}h(\tY_1')\Biggiven X_1 \Big] \\
  & \ind \Big( 1 \neq N_1(N_1(1)) \Big) \Big] + \E \Big[\Cov\Big[F_Y\big(Y_1 \wedge \tY_1\big), F_Y\big(\tY_1 \wedge \tY_1' \big) \Biggiven X_1 \Big] \Big] \Big\lvert \Big\{j: j \neq 1, N_1(j) = N_1(1)\Big\} \Big\rvert \Big\} + o(1).
\end{align*}

Notice that for any $t, t' \in \bR$, $F_Y(t \wedge t') \le (F_Y(t)+F_Y(t'))/2$. In addition, we have 
\[
h(t) = \E[G_X^2(t)] \le \E[G_X(t)] = 1-F_Y(t). 
\]
Then for any $Y_1,\tY_1,\tY_1' \in \bR$,
\begin{align*}
  0 \le F_Y\big(Y_1 \wedge \tY_1\big) + \frac{1}{2} h(Y_1) + \frac{1}{2} h(\tY_1) \le 1,
\end{align*}
and
\begin{align*}
  0 \le  \frac{1}{3}F_Y\big(Y_1 \wedge \tY_1\big) + \frac{1}{3}F_Y\big(Y_1 \wedge \tY_1'\big) + \frac{1}{3}F_Y\big(\tY_1 \wedge \tY_1'\big) + \frac{1}{3}h(Y_1) + \frac{1}{3}h(\tY_1) + \frac{1}{3}h(\tY_1') \le 1.
\end{align*}

Leveraging Popoviciu's inequality, for any $X_1 \in \bR$, we deduce
\begin{align*}
  & \Var \Big[ F_Y\big(Y_1 \wedge \tY_1\big) + \frac{1}{2} h(Y_1) + \frac{1}{2} h(\tY_1) \Biggiven X_1 \Big] \le \frac{1}{4},\\
  & \Var \Big[ \frac{1}{3}F_Y\big(Y_1 \wedge \tY_1\big) + \frac{1}{3}F_Y\big(Y_1 \wedge \tY_1'\big) + \frac{1}{3}F_Y\big(\tY_1 \wedge \tY_1'\big) + \frac{1}{3}h(Y_1) + \frac{1}{3}h(\tY_1) + \frac{1}{3}h(\tY_1')\Biggiven X_1 \Big] \le \frac{1}{4},\\
  & \Cov\Big[F_Y\big(Y_1 \wedge \tY_1\big), F_Y\big(\tY_1 \wedge \tY_1' \big) \Biggiven X_1 \Big] \le \Var\Big[F_Y\big(Y_1 \wedge \tY_1\big) \Biggiven X_1 \Big] \le \frac{1}{4}.
\end{align*}

Then we have
\begin{align*}
  & n \E[\Var[\xi_n \given \mX]] \\
  \le & 36(1+O(n^{-2})) \Big[\frac{1}{2} \P \Big( 1 = N_1(N_1(1)) \Big) + \frac{3}{4} \P \Big( 1 \neq N_1(N_1(1)) \Big) + \frac{1}{4} \E \Big[ \Big\lvert \Big\{j: j \neq 1, N_1(j) = N_1(1)\Big\} \Big\rvert \Big] \Big] + o(1).
\end{align*}

From Lemma 20.6 together with Theorem 20.16 in \cite{biau2015lectures}, the size of the set 
\[
\Big\lvert \Big\{j: j \neq 1, N_1(j) = N_1(1)\Big\} \Big\rvert 
\]
is always bounded by a constant that only depends on $d$. Accordingly, we have
\begin{align}\label{eq:variance5}
  \limsup_{n \to \infty} n \E[\Var[\xi_n \given \mX]]  <\infty.
\end{align}

If we further assume $F_X$ to be absolutely continuous, then Lemmas 3.2 and 3.3 in \cite{shi2021ac} show
\[
  \lim_{n \to \infty} \P \Big( 1 = N_1(N_1(1)) \Big) = \kq_d,~~ \lim_{n \to \infty} \E \Big[ \Big\lvert \Big\{j: j \neq 1, N_1(j) = N_1(1)\Big\} \Big\rvert \Big] = \ko_d.
\]
It then holds true that
\begin{align}\label{eq:variance3}
  \limsup_{n \to \infty} n \E[\Var[\xi_n \given \mX]] \le 27 - 9 \kq_d + 9 \ko_d.
\end{align}
On the other hand, Lemma~\ref{lemma:variance,cond} yields
\begin{align*}
  n \Var[\E[\xi_n \given \mX]] = 36(1+O(n^{-2})) \Var\Big[h_1(X_1) + h_0(X_1)\Big] + o(1).
\end{align*}
Using the definition of $h_0$ and $h_1$,
\begin{align*}
  0 \le h_1(X_1) + h_0(X_1) =  \E \Big[ F_Y\big(Y_1 \wedge \tY_1\big) + \frac{1}{2} h(Y_1) + \frac{1}{2} h(\tY_1) \Biggiven X_1 \Big] \le 1.
\end{align*}
Then Popoviciu's inequality implies
\begin{align}\label{eq:variance4}
  \limsup_{n \to \infty} n \Var[\E[\xi_n \given \mX]] \le 9.
\end{align}

Combining \eqref{eq:variance5}, \eqref{eq:variance3}, \eqref{eq:variance4} completes the proof for $\xi_n$.

\vspace{0.5cm}

For $\overline{\xi}_n$, the only difference is that we have
\[
  \lim_{n \to \infty} \P \Big( 1 = \overline{N}_1(\overline{N}_1(1)) \Big) = \lim_{n \to \infty} \E \Big[ \Big\lvert \Big\{j: j \neq 1, \overline{N}_1(j) = \overline{N}_1(1)\Big\} \Big\rvert \Big] = 0,
\]
and thusly one can replace the bound \eqref{eq:variance3} by
\[
  \limsup_{n \to \infty} n \E[\Var[\overline{\xi}_n \given \mX]] \le 27.
\]
We thus complete the proof. 
\end{proof}

\nb{
\subsection{Proof of Proposition~\ref{prop:test}}

Combining Theorem~\ref{thm:main}, Theorem~\ref{thm:var}, Proposition~\ref{prop:bias} and Proposition~\ref{prop:variance} using Slutsky's theorem, we have
\begin{align*}
  \sqrt{n}\big(\overline\xi_n - \xi \big)/\hat{\overline\sigma}\longrightarrow N(0,1) ~~ {\rm in~distribution}.
\end{align*}

\begin{proof}[Proof of Proposition~\ref{prop:test}~\ref{prop:size}]

For any fix probability measure satisfying $H_0$, we have $\xi \leq \kappa$, and then
\begin{align*}
  \P(T=1) = \P(\overline\xi_n>\kappa+z_{1-\alpha}\hat{\overline\sigma}/\sqrt{n}) \le \P(\overline\xi_n - \xi > z_{1-\alpha}\hat{\overline\sigma}/\sqrt{n}) = \P(\sqrt{n}(\overline\xi_n - \xi)/\hat{\overline\sigma} > z_{1-\alpha}).
\end{align*}

Then we have
\begin{align*}
  \limsup_{n \to \infty} \P(T=1) \le \limsup_{n \to \infty} \P(\sqrt{n}(\overline\xi_n - \xi)/\hat{\overline\sigma} > z_{1-\alpha}) = \alpha.
\end{align*}

\end{proof}

\begin{proof}[Proof of Proposition~\ref{prop:test}~\ref{prop:power}]
For any fix probability measure violating $H_0$, we have $\xi>\kappa$, and then
\begin{align*}
  &\P(T=1) = \P(\overline\xi_n>\kappa+z_{1-\alpha}\hat{\overline\sigma}/\sqrt{n}) = \P(\overline\xi_n - \xi >\kappa - \xi +z_{1-\alpha}\hat{\overline\sigma}/\sqrt{n}) \\
  =& \P(\sqrt{n}(\overline\xi_n - \xi)/\hat{\overline\sigma} > z_{1-\alpha} - \sqrt{n}(\xi-\kappa)/\hat{\overline\sigma}).
\end{align*}

By the central limit theorem of $\xi$ and that $\xi-\kappa>0$, we have
\begin{align*}
  \liminf_{n \to \infty} \P(T=1) = \liminf_{n \to \infty} \P(\sqrt{n}(\overline\xi_n - \xi)/\hat{\overline\sigma} > z_{1-\alpha} - \sqrt{n}(\kappa-\xi)/\hat{\overline\sigma}) = 1.
\end{align*}

\end{proof}

\begin{proof}[Proof of Proposition~\ref{prop:test}~\ref{prop:local}]
Recall that $\overline{\xi}_n^*$ is the H\'ajek representations of $\overline{\xi}_n$. Let $\mu_n^*$ be the law of $\big(\overline\xi_n^* - \E[\overline\xi_n^*]\big)/\sqrt{\Var[\overline\xi_n^*]}$ and $\nu$ be the law of the standard normal distribution. By the proof of Theorem~\ref{thm:clt}, we have $\lim_{n \to \infty} \cW(\mu_n^*,\nu) =0$, where $\cW$ is the Wasserstein-1 distance.

Let $\mu_n$ be the law of $\sqrt{n}\big(\overline\xi_n - \E[\overline\xi_n]\big)/\hat{\overline{\sigma}}$. From Proposition~\ref{prop:variance} and Theorem~\ref{thm:hayek}, we have $\limsup_{n \to \infty} \cW(\mu_n,\mu_n^*) \le \limsup_{n \to \infty} \cW_2(\mu_n,\mu_n^*) = 0$, where $\cW_2$ is the Wasserstein-2 distance. 

Then we have $\limsup_{n \to \infty} \cW(\mu_n,\nu) \le \limsup_{n \to \infty} \cW(\mu_n,\mu_n^*) + \limsup_{n \to \infty} \cW(\mu_n^*,\nu)=0$, which yields
\begin{align*}
  \sqrt{n}\big(\overline\xi_n - \E[\overline\xi_n]\big)/\hat{\overline{\sigma}} \longrightarrow N(0,1) ~~ {\rm in~distribution}.
\end{align*}

By Proposition~\ref{prop:bias}, we have
\begin{align*}
  \sqrt{n}\big(\overline\xi_n - \xi^{(n)}\big)/\hat{\overline{\sigma}} \longrightarrow N(0,1) ~~ {\rm in~distribution}.
\end{align*}

For a sequence of probability measures with $\xi^{(n)} = \kappa + n^{-1/2}h$, we have
\begin{align*}
  &\P(T=1) = \P(\overline\xi_n>\kappa+z_{1-\alpha}\hat{\overline\sigma}/\sqrt{n}) = \P(\overline\xi_n - \xi^{(n)} >\kappa - \xi^{(n)} +z_{1-\alpha}\hat{\overline\sigma}/\sqrt{n}) \\
  =& \P(\sqrt{n}(\overline\xi_n - \xi^{(n)})/\hat{\overline\sigma} > z_{1-\alpha} - \sqrt{n}(\xi^{(n)}-\kappa)/\hat{\overline\sigma}) = \P(\sqrt{n}(\overline\xi_n - \xi^{(n)})/\hat{\overline\sigma} > z_{1-\alpha} - h/\hat{\overline\sigma}).
\end{align*}

By the central limit theorem above, we have
\begin{align*}
  \lim_{n \to \infty} \P(T=1) = 1 - \phi(z_{1-\alpha} - h/\overline\sigma).
\end{align*}

\end{proof}
}

\subsection{Proof of Lemma~\ref{lemma:hayek1}}

\begin{proof}[Proof of Lemma~\ref{lemma:hayek1}]

We establish the two claims for $i=1,2,3,4$ seperately.

{\bf Part I.} $i=1$.

Since $[(X_i,Y_i)]_{i=1}^n$ are i.i.d., we have 
\begin{align*}
   \E [T_1] =& \E \Big[\frac{1}{n^3} \sum_{i=1}^n \Var \Big[ \min\big\{R_i, R_{N_1(i)}\big\} \Biggiven \mX \Big] \Big]\\
    =& \frac{1}{n^2} \E\Big[\Var \Big[ \min\big\{R_1, R_{N_1(1)}\big\} \Biggiven \mX \Big] \Big]\\
  =& \frac{1}{n^2} \E\Big[\Var \Big[ \sum_{k=1}^n \ind\big(Y_k \le Y_1 \wedge Y_{N_1(1)}\big) \Biggiven \mX \Big] \Big]\\
  =& \frac{(n-1)(n-2)}{n^2} \E\Big[\Cov \Big[ \ind\big(Y_2 \le Y_1 \wedge Y_{N_1(1)}\big), \ind\big(Y_3 \le Y_1 \wedge Y_{N_1(1)}\big) \Biggiven \mX \Big] \Big]\\
  & + \frac{1}{n^2} \E\Big[\Var \Big[ \ind\big(Y_1 \le Y_1 \wedge Y_{N_1(1)}\big) \Biggiven \mX \Big] \Big] + \frac{n-1}{n^2} \E\Big[\Var \Big[ \ind\big(Y_2 \le Y_1 \wedge Y_{N_1(1)}\big) \Biggiven \mX \Big] \Big]\\
  & + \frac{2(n-1)}{n^2} \E\Big[\Cov \Big[ \ind\big(Y_1 \le Y_1 \wedge Y_{N_1(1)}\big), \ind\big(Y_2 \le Y_1 \wedge Y_{N_1(1)}\big) \Biggiven \mX \Big] \Big]\\
  =: & \frac{(n-1)(n-2)}{n^2} \E\Big[\Cov \Big[ \ind\big(Y_2 \le Y_1 \wedge Y_{N_1(1)}\big), \ind\big(Y_3 \le Y_1 \wedge Y_{N_1(1)}\big) \Biggiven \mX \Big] \Big] + S_1\\
  = & (1+O(n^{-1})) \E\Big[\Cov \Big[ \ind\big(Y_2 \le Y_1 \wedge \tY_1 \big), \ind\big(Y_3 \le Y_1 \wedge \tY_1 \big) \Biggiven \mX \Big] \Big] + S_1\\
  & + (1+O(n^{-1})) \Big\{\E\Big[\Cov \Big[ \ind\big(Y_2 \le Y_1 \wedge Y_{N_1(1)}\big), \ind\big(Y_3 \le Y_1 \wedge Y_{N_1(1)}\big) \Biggiven \mX \Big] \Big] \\
  & - \E\Big[\Cov \Big[ \ind\big(Y_2 \le Y_1 \wedge \tY_1 \big), \ind\big(Y_3 \le Y_1 \wedge \tY_1 \big) \Biggiven \mX \Big] \Big] \Big\}\\
  =: & (1+O(n^{-1})) \E\Big[\Cov \Big[ \ind\big(Y_2 \le Y_1 \wedge \tY_1 \big), \ind\big(Y_3 \le Y_1 \wedge \tY_1 \big) \Biggiven \mX \Big] \Big] + S_1 + (1+O(n^{-1})) S_2,
  \yestag\label{eq:hayek2}
\end{align*}
where $\tY_1$ is sampled from $F_{Y \given X=X_1}$ independent of the data.

For $S_1$ in \eqref{eq:hayek2}, noticing that the variance of the indicator function is bounded by 1 and then invoking the Cauchy–Schwarz inequality yields
\begin{align}\label{eq:hayek3}
  \lvert S_1 \rvert \le \frac{3n-2}{n^2} = O(n^{-1}).
\end{align}

For $S_2$ in \eqref{eq:hayek2}, we first have 
\begin{align*}
  & \E \Big[\ind\big(Y_2 \le Y_1 \wedge Y_{N_1(1)}\big) \ind\big(Y_3 \le Y_1 \wedge Y_{N_1(1)}\big) \Biggiven \mX \Big]\\
  =& \int \ind\big(y_2 \le y_1 \wedge y_4\big) \ind\big(y_3 \le y_1 \wedge y_4\big) \d \mu_{X_1}(y_1) \d  \mu_{X_2}(y_2) \d  \mu_{X_3}(y_3) \d  \mu_{X_{N_1(1)}}(y_4) \ind(N_1(1) \neq 2,3)\\
  & + \int \ind\big(y_2 \le y_1 \wedge y_2\big) \ind\big(y_3 \le y_1 \wedge y_2\big) \d \mu_{X_1}(y_1) \d  \mu_{X_2}(y_2) \d  \mu_{X_3}(y_3) \ind(N_1(1) = 2)\\
  & + \int \ind\big(y_2 \le y_1 \wedge y_3\big) \ind\big(y_3 \le y_1 \wedge y_3\big) \d \mu_{X_1}(y_1) \d  \mu_{X_2}(y_2) \d  \mu_{X_3}(y_3) \ind(N_1(1) = 3)\\
  =:& \int \ind\big(y_2 \le y_1 \wedge y_4\big) \ind\big(y_3 \le y_1 \wedge y_4\big) \d \mu_{X_1}(y_1) \d  \mu_{X_2}(y_2) \d  \mu_{X_3}(y_3) \d  \mu_{X_{N_1(1)}}(y_4) + Q_1\\
  = & \int G_{X_1}(y_2 \vee y_3) G_{X_{N_1(1)}}(y_2 \vee y_3) \d \mu_{X_2}(y_2) \d  \mu_{X_3}(y_3) + Q_1.
\end{align*}

From the boundedness of the indicator function and $\P(N_1(1)=2) =\P(N_1(1)=3) = 1/(n-1)$, we then have $\E[\lvert Q_1 \rvert] = O(n^{-1})$.

We can establish in the same way that
\begin{align*}
  & \E \Big[\ind\big(Y_2 \le Y_1 \wedge Y_{N_1(1)}\big) \Biggiven \mX \Big] \E \Big[\ind\big(Y_3 \le Y_1 \wedge Y_{N_1(1)}\big) \Biggiven \mX \Big]\\
  =& \int \ind\big(y_2 \le y_1 \wedge y_4\big) \ind\big(y_3 \le y_5 \wedge y_6\big) \d \mu_{X_1}(y_1) \d  \mu_{X_2}(y_2) \d \mu_{X_3}(y_3) \d \mu_{X_{N_1(1)}}(y_4) \d \mu_{X_1}(y_5) \d \mu_{X_{N_1(1)}}(y_6) + Q_2\\
  =& \int G_{X_1}(y_2) G_{X_{N_1(1)}}(y_2) G_{X_1}(y_3) G_{X_{N_1(1)}}(y_3) \d \mu_{X_2}(y_2) \d  \mu_{X_3}(y_3) + Q_2,
\end{align*}
with $\E[\lvert Q_2 \rvert] = O(n^{-1})$.

On the other hand,
\begin{align*}
  & \E \Big[ \ind\big(Y_2 \le Y_1 \wedge \tY_1 \big) \ind\big(Y_3 \le Y_1 \wedge \tY_1 \big) \Biggiven \mX \Big]\\
  = & \int \ind\big(y_2 \le y_1 \wedge y_4\big) \ind\big(y_3 \le y_1 \wedge y_4\big) \d \mu_{X_1}(y_1) \d  \mu_{X_2}(y_2) \d  \mu_{X_3}(y_3) \d  \mu_{X_1}(y_4)\\
  = & \int G^2_{X_1}(y_2 \vee y_3) \d \mu_{X_2}(y_2) \d  \mu_{X_3}(y_3),
\end{align*}
and
\begin{align*}
  & \E \Big[\ind\big(Y_2 \le Y_1 \wedge \tY_1\big) \Biggiven \mX \Big] \E \Big[\ind\big(Y_3 \le Y_1 \wedge \tY_1\big) \Biggiven \mX \Big]\\
  =& \int \ind\big(y_2 \le y_1 \wedge y_4\big) \ind\big(y_3 \le y_5 \wedge y_6\big) \d \mu_{X_1}(y_1) \d  \mu_{X_2}(y_2) \d \mu_{X_3}(y_3) \d \mu_{X_1}(y_4) \d \mu_{X_1}(y_5) \d \mu_{X_1}(y_6)\\
  = & \int G^2_{X_1}(y_2) G^2_{X_1}(y_3) \d \mu_{X_2}(y_2) \d  \mu_{X_3}(y_3).
\end{align*}

Then, since $G_x$ is uniformly bounded by 1 for any $x \in \bR^d$,
\begin{align*}
  & \Big\lvert \E \Big[\ind\big(Y_2 \le Y_1 \wedge Y_{N_1(1)}\big) \ind\big(Y_3 \le Y_1 \wedge Y_{N_1(1)}\big) \Biggiven \mX \Big] - \E \Big[\ind\big(Y_2 \le Y_1 \wedge \tY_1 \big) \ind\big(Y_3 \le Y_1 \wedge \tY_1 \big) \Biggiven \mX \Big] \Big\rvert\\
  = & \Big\lvert \int G_{X_1}(y_2 \vee y_3) \Big(G_{X_{N_1(1)}}(y_2 \vee y_3) - G_{X_1}(y_2 \vee y_3) \Big) \d \mu_{X_2}(y_2) \d  \mu_{X_3}(y_3) + Q_1 \Big\rvert\\
  \le & \int \Big\lvert G_{X_{N_1(1)}}(y_2 \vee y_3) - G_{X_1}(y_2 \vee y_3) \Big\rvert \d \mu_{X_2}(y_2) \d  \mu_{X_3}(y_3) + \lvert Q_1 \rvert,
\end{align*}
and
\begin{align*}
  & \Big\lvert \E \Big[\ind\big(Y_2 \le Y_1 \wedge Y_{N_1(1)}\big) \Biggiven \mX \Big] \E \Big[\ind\big(Y_3 \le Y_1 \wedge Y_{N_1(1)}\big) \Biggiven \mX \Big]  - \E \Big[\ind\big(Y_2 \le Y_1 \wedge \tY_1 \big) \Biggiven \mX \Big] \E \Big[\ind\big(Y_3 \le Y_1 \wedge \tY_1 \big) \Biggiven \mX \Big] \Big\rvert\\
  = & \Big\lvert \int G_{X_1}(y_2) G_{X_1}(y_3) \Big( G_{X_{N_1(1)}}(y_2) G_{X_{N_1(1)}}(y_3) - G_{X_1}(y_2) G_{X_1}(y_3) \Big) \d \mu_{X_2}(y_2) \d  \mu_{X_3}(y_3) + Q_2 \Big\rvert\\
  \le & \int \Big\lvert G_{X_{N_1(1)}}(y_2) - G_{X_1}(y_2) \Big\rvert \d \mu_{X_2}(y_2) \d  \mu_{X_3}(y_3) + \int \Big\lvert G_{X_{N_1(1)}}(y_3) - G_{X_1}(y_3) \Big\rvert \d \mu_{X_2}(y_2) \d \mu_{X_3}(y_3) + \lvert Q_2 \rvert.
\end{align*}

We then have
\begin{align*}
  \lvert S_2 \rvert = & \Big\lvert \E\Big[\Cov \Big[ \ind\big(Y_2 \le Y_1 \wedge Y_{N_1(1)}\big), \ind\big(Y_3 \le Y_1 \wedge Y_{N_1(1)}\big) \Biggiven \mX \Big] \Big] \\
  & - \E\Big[\Cov \Big[ \ind\big(Y_2 \le Y_1 \wedge \tY_1 \big), \ind\big(Y_3 \le Y_1 \wedge \tY_1 \big) \Biggiven \mX \Big] \Big] \Big\rvert\\
  \le & \E \Big[\int \Big\lvert G_{X_{N_1(1)}}(y_2 \vee y_3) - G_{X_1}(y_2 \vee y_3) \Big\rvert \d \mu_{X_2}(y_2) \d  \mu_{X_3}(y_3) \Big] \\
  & + 2 \E \Big[\int \Big\lvert G_{X_{N_1(1)}}(y_2) - G_{X_1}(y_2) \Big\rvert \d \mu_{X_2}(y_2) \d  \mu_{X_3}(y_3) \Big] + \E[\lvert Q_1 \rvert] + \E[\lvert Q_2 \rvert].
\end{align*}

For the first term above, since $G_x$ is uniformly bounded by 1 for $x \in \bR^d$, we have
\begin{align*}
  \int \Big\lvert G_{X_{N_1(1)}}(y_2 \vee y_3) - G_{X_1}(y_2 \vee y_3) \Big\rvert \d \mu_{X_2}(y_2) \d  \mu_{X_3}(y_3) \le 2\int  \mu_{X_2}(y_2) \d  \mu_{X_3}(y_3) = 2.
\end{align*}
Invoking Fatou's lemma then yields
\begin{align*}
  & \limsup_{n \to \infty} \E \Big[\int \Big\lvert G_{X_{N_1(1)}}(y_2 \vee y_3) - G_{X_1}(y_2 \vee y_3) \Big\rvert \d \mu_{X_2}(y_2) \d  \mu_{X_3}(y_3) \Big]\\ 
  = &  \limsup_{n \to \infty} \E \Big[ \E \Big[ \int \Big\lvert G_{X_{N_1(1)}}(y_2 \vee y_3) - G_{X_1}(y_2 \vee y_3) \Big\rvert \d \mu_{X_2}(y_2) \d  \mu_{X_3}(y_3) \Biggiven X_2,X_3\Big] \Big]\\
  = & \limsup_{n \to \infty} \E \Big[  \int \E \Big[ \Big\lvert G_{X_{N_1(1)}}(y_2 \vee y_3) - G_{X_1}(y_2 \vee y_3) \Big\rvert \Biggiven X_2,X_3\Big] \d \mu_{X_2}(y_2) \d  \mu_{X_3}(y_3)  \Big]\\
  \le & \E \Big[  \int  \limsup_{n \to \infty} \E \Big[ \Big\lvert G_{X_{N_1(1)}}(y_2 \vee y_3) - G_{X_1}(y_2 \vee y_3) \Big\rvert \Biggiven X_2,X_3\Big] \d \mu_{X_2}(y_2) \d  \mu_{X_3}(y_3)  \Big].
\end{align*}

Notice that for any $t \in \bR$, the map $x \to G_x(t)$ is a measurable function. Then from Lemma 11.7 in \cite{azadkia2019simple}, $G_{X_{N_1(1)}}(t) - G_{X_1}(t)\stackrel{\sf p}{\to}0$. Then for all $t \in \bR$ and almost all $X_2,X_3 \in \bR^d$, 
\begin{align*}
  \limsup_{n \to \infty} \E \Big[ \Big\lvert G_{X_{N_1(1)}}(t) - G_{X_1}(t) \Big\rvert \Biggiven X_2,X_3\Big] = 0,
\end{align*}
and accordingly
\[
  \lim_{n \to \infty} \E \Big[\int \Big\lvert G_{X_{N_1(1)}}(y_2 \vee y_3) - G_{X_1}(y_2 \vee y_3) \Big\rvert \d \mu_{X_2}(y_2) \d  \mu_{X_3}(y_3) \Big] = 0.
\]

We can handle the second term in the upper bound of $\lvert S_2 \rvert$ in the same way. Recall that $\E[\lvert Q_1 \rvert],\E[\lvert Q_2 \rvert] = O(n^{-1})$. We then obtain
\begin{align}\label{eq:hayek4}
  \lvert S_2 \rvert = o(1).
\end{align}

In the end, let's study the first term in \eqref{eq:hayek2}. Notice that
\begin{align*}
  & \E\Big[\E \Big[ \ind\big(Y_2 \le Y_1 \wedge \tY_1 \big) \ind\big(Y_3 \le Y_1 \wedge \tY_1 \big) \Biggiven \mX \Big] \Big]\\
  = & \E\Big[ \int \ind\big(y_2 \le y_1 \wedge y_4\big) \ind\big(y_3 \le y_1 \wedge y_4\big) \d \mu_{X_1}(y_1) \d  \mu_{X_2}(y_2) \d  \mu_{X_3}(y_3) \d  \mu_{X_1}(y_4) \Big]\\
  = & \E\Big[ \E\Big[\int \ind\big(y_2 \le y_1 \wedge y_4\big) \ind\big(y_3 \le y_1 \wedge y_4\big) \d \mu_{X_1}(y_1) \d  \mu_{X_2}(y_2) \d  \mu_{X_3}(y_3) \d  \mu_{X_1}(y_4) \Biggiven X_1 \Big]\Big]\\
  = & \E\Big[ \int \ind\big(y_2 \le y_1 \wedge y_4\big) \ind\big(y_3 \le y_1 \wedge y_4\big) \d \mu_{X_1}(y_1) \d  \mu(y_2) \d  \mu(y_3) \d  \mu_{X_1}(y_4) \Big]\\
  = & \E\Big[ \int F_Y^2\big(y_1 \wedge y_4\big)  \d \mu_{X_1}(y_1) \d \mu_{X_1}(y_4) \Big] = \E\Big[\E \Big[ F_Y^2\big(Y_1 \wedge \tY_1 \big) \Biggiven X_1 \Big] \Big].
\end{align*}

We can establish
\begin{align*}
  \E\Big[\E \Big[ \ind\big(Y_2 \le Y_1 \wedge \tY_1 \big) \Biggiven \mX \Big] \E \Big[ \ind\big(Y_3 \le Y_1 \wedge \tY_1 \big) \Biggiven \mX \Big] \Big] = \E\Big[ \Big(\E \Big[ F_Y\big(Y_1 \wedge \tY_1 \big) \Biggiven X_1 \Big] \Big)^2 \Big].
\end{align*}

Then
\begin{align}\label{eq:hayek5}
  \E\Big[\Cov \Big[ \ind\big(Y_2 \le Y_1 \wedge \tY_1 \big), \ind\big(Y_3 \le Y_1 \wedge \tY_1 \big) \Biggiven \mX \Big] \Big] = \E\Big[\Var \Big[ F_Y\big(Y_1 \wedge \tY_1 \big) \Biggiven X_1 \Big] \Big].
\end{align}

Plugging \eqref{eq:hayek3}-\eqref{eq:hayek5} to \eqref{eq:hayek2} yields
\begin{align*}
  \E [T_1] = (1+O(n^{-1})) \E\Big[\Var \Big[ F_Y\big(Y_1 \wedge \tY_1 \big) \Biggiven X_1 \Big] \Big] + o(1).
\end{align*}

Similar to \eqref{eq:hayek4}, we also have
\begin{align*}
   \E[T_1^*] =& \E\Big[ \frac{1}{n} \sum_{i=1}^n \Var \Big[ \min\big\{F_Y(Y_i), F_Y(Y_{N_1(i)})\big\} \Biggiven \mX \Big] \Big]\\
  = & \E \Big[\Var \Big[ F_Y(Y_1) \wedge F_Y(Y_{N_1(1)}) \Biggiven \mX \Big] \Big] = \E \Big[ \Var \Big[ F_Y\big(Y_1 \wedge Y_{N_1(1)}\big) \Biggiven \mX \Big] \Big] \\
  = & \E \Big[ \Var \Big[ F_Y\big(Y_1 \wedge \tY_1\big) \Biggiven \mX \Big] \Big] + o(1) \\
  = & \E \Big[ \Var \Big[ F_Y\big(Y_1 \wedge \tY_1\big) \Biggiven X_1 \Big] \Big] + o(1).
\end{align*}
Using the fact that $F_Y\leq 1$, we complete the proof of the first claim, and the second claim can be established in the same way.

\vspace{0.2cm}

{\bf Part II.} $i=2$.

Since $[(X_i,Y_i)]_{i=1}^n$ are i.i.d. and the indicator function is bounded, we have
\begin{align*}
   \E[T_2] =& \frac{1}{n^3} \E \Big[\sum_{\substack{j=N_1(i),i \neq N_1(j)\\{\rm or}~i=N_1(j),j \neq N_1(i)}} \Cov\Big[\min\big\{R_i, R_{N_1(i)}\big\}, \min\big\{R_j, R_{N_1(j)}\big\} \Biggiven \mX \Big] \Big]\\
  = & \frac{2}{n^3} \E \Big[\sum_{j=N_1(i),i \neq N_1(j)} \Cov\Big[\min\big\{R_i, R_{N_1(i)}\big\}, \min\big\{R_j, R_{N_1(j)}\big\} \Biggiven \mX \Big] \Big]\\
  = & \frac{2}{n^3} \E \Big[\sum_{i=1}^n \Cov\Big[\min\big\{R_i, R_{N_1(i)}\big\}, \min\big\{R_{N_1(i)}, R_{N_1(N_1(i))}\big\} \Biggiven \mX \Big] \ind \Big( i \neq N_1(N_1(i)) \Big) \Big]\\
  = & \frac{2}{n^2} \E \Big[\Cov\Big[\min\big\{R_1, R_{N_1(1)}\big\}, \min\big\{R_{N_1(1)}, R_{N_1(N_1(1))}\big\} \Biggiven \mX \Big] \ind \Big( 1 \neq N_1(N_1(1)) \Big) \Big]\\
  = & \frac{2}{n^2} \E \Big[\Cov\Big[\sum_{k=1}^n \ind\big(Y_k \le Y_1 \wedge Y_{N_1(1)}\big), \sum_{\ell=1}^n \ind\big(Y_\ell \le Y_{N_1(1)} \wedge Y_{N_1(N_1(1))}\big)  \Biggiven \mX \Big] \ind \Big( 1 \neq N_1(N_1(1)) \Big) \Big]\\
  = & \frac{2(n-1)(n-2)}{n^2} \E \Big[\Cov\Big[\ind\big(Y_2 \le Y_1 \wedge Y_{N_1(1)}\big), \ind\big(Y_3 \le Y_{N_1(1)} \wedge Y_{N_1(N_1(1))}\big) \Biggiven \mX \Big] \ind \Big( 1 \neq N_1(N_1(1)) \Big) \Big] + O(n^{-1}).
\end{align*}

Lemma 11.3 in \cite{azadkia2019simple} shows $X_{N_1(1)} \to X_1$ almost surely. Notice that 
\[
\lVert X_{N_1(N_1(1))} - X_{1} \rVert \le \lVert X_{N_1(1)} - X_{1} \rVert + \lVert X_{N_1(N_1(1))} - X_{N_1(1)} \rVert \le 2\lVert X_{N_1(1)} - X_{1} \rVert. 
\]
Then $X_{N_1(N_1(1))} \to X_1$ almost surely. Similar to the proof of Lemma 11.7 in \cite{azadkia2019simple}, for any $t \in \bR$, one can prove
\[
G_{X_{N_1(N_1(1))}}(t) - G_{X_1}(t) \stackrel{\sf p}{\longrightarrow} 0.
\]
Notice that $\P(N_1(1)=2,3) = 2/(n-1)$ and $\P(N_1(N_1(1))=2,3) \le 2/(n-1)$. Then, similar to the proof of \eqref{eq:hayek4},
\begin{align*}
  & \E \Big[\Cov\Big[\ind\big(Y_2 \le Y_1 \wedge Y_{N_1(1)}\big), \ind\big(Y_3 \le Y_{N_1(1)} \wedge Y_{N_1(N_1(1))}\big) \Biggiven \mX \Big] \ind \Big( 1 \neq N_1(N_1(1)) \Big) \Big]\\
  = & \E \Big[\Cov\Big[\ind\big(Y_2 \le Y_1 \wedge \tY_1\big), \ind\big(Y_3 \le \tY_1 \wedge \tY_1' \big) \Biggiven \mX \Big] \ind \Big( 1 \neq N_1(N_1(1)) \Big) \Big] + o(1).
\end{align*}

Let $\mX^{-2,3}:=\mX \setminus \{X_2,X_3\}$, and let $N^{-2,3}(j)$ index the NN of $X_j$ in $\mX^{-2,3}$ for $j \in \zahl{n}$ and $j \neq 2,3$. If $N_1(1) \neq 2,3$ and $N_1(N_1(1)) \neq 2,3$, then $N_1(1) = N^{-2,3}(1)$ and $N_1(N_1(1)) = N^{-2,3}(N_1(1))$. Then $N^{-2,3}(N^{-2,3}(1)) = N_1(N_1(1))$. Notice that $\P(N_1(1)=2,3), \P(N_1(N_1(1))=2,3) = O(n^{-1})$ and the event $\{1 \neq N^{-2,3}(N^{-2,3}(1))\}$ is a function of $\mX^{-2,3}$. From the boundedness of the indicator function and $F_Y$,
\begin{align*}
  & \E \Big[\Cov\Big[\ind\big(Y_2 \le Y_1 \wedge \tY_1\big), \ind\big(Y_3 \le \tY_1 \wedge \tY_1' \big) \Biggiven \mX \Big] \ind \Big( 1 \neq N_1(N_1(1)) \Big) \Big]\\
  = & \E \Big[\Cov\Big[\ind\big(Y_2 \le Y_1 \wedge \tY_1\big), \ind\big(Y_3 \le \tY_1 \wedge \tY_1' \big) \Biggiven \mX \Big] \ind \Big( 1 \neq N^{-2,3}(N^{-2,3}(1)) \Big) \Big] + O(n^{-1})\\
  = & \E \Big[\E \Big[\Cov\Big[\ind\big(Y_2 \le Y_1 \wedge \tY_1\big), \ind\big(Y_3 \le \tY_1 \wedge \tY_1' \big) \Biggiven \mX \Big] \Biggiven \mX^{-2,3} \Big] \ind \Big( 1 \neq N^{-2,3}(N^{-2,3}(1)) \Big) \Big] + O(n^{-1})\\
  = & \E \Big[\Cov\Big[F_Y\big(Y_1 \wedge \tY_1\big), F_Y\big(\tY_1 \wedge \tY_1' \big) \Biggiven X_1 \Big] \ind \Big( 1 \neq N^{-2,3}(N^{-2,3}(1)) \Big) \Big] + O(n^{-1})\\
  = & \E \Big[\Cov\Big[F_Y\big(Y_1 \wedge \tY_1\big), F_Y\big(\tY_1 \wedge \tY_1' \big) \Biggiven X_1 \Big] \ind \Big( 1 \neq N_1(N_1(1)) \Big) \Big] + O(n^{-1}).
\end{align*}

We then obtain
\begin{align*}
  \E[T_2] = 2(1+O(n^{-1})) \E \Big[\Cov\Big[F_Y\big(Y_1 \wedge \tY_1\big), F_Y\big(\tY_1 \wedge \tY_1' \big) \Biggiven X_1 \Big] \ind \Big( 1 \neq N_1(N_1(1)) \Big) \Big] + o(1).
\end{align*}

For $T_2^*$, we have
\begin{align*}
  \E[T_2^*] =& \frac{1}{n} \E \Big[ \sum_{\substack{j=N_1(i),i \neq N_1(j)\\{\rm or}~i=N_1(j),j \neq N_1(i)}} \Cov\Big[\min\big\{F_Y(Y_i), F_Y(Y_{N_1(i)})\big\}, \min\big\{F_Y(Y_j), F_Y(Y_{N_1(j)})\big\} \Biggiven \mX \Big] \Big]\\
  = & \frac{2}{n} \E \Big[\sum_{j=N_1(i),i \neq N_1(j)} \Cov\Big[\min\big\{F_Y(Y_i), F_Y(Y_{N_1(i)})\big\}, \min\big\{F_Y(Y_j), F_Y(Y_{N_1(j)})\big\} \Biggiven \mX \Big] \Big]\\
  = & 2 \E \Big[\Cov\Big[F_Y\big(Y_1 \wedge Y_{N_1(1)}\big), F_Y\big(Y_{N_1(1)} \wedge Y_{N_1(N_1(1))}\big) \Biggiven \mX \Big] \ind \Big( 1 \neq N_1(N_1(1)) \Big) \Big]\\
  = & 2\E \Big[\Cov\Big[F_Y\big(Y_1 \wedge \tY_1\big), F_Y\big(\tY_1 \wedge \tY_1' \big) \Biggiven X_1 \Big] \ind \Big( 1 \neq N_1(N_1(1)) \Big) \Big] + o(1).
\end{align*}

From the boundedness of $F_Y$, we complete the proof of the first claim.

The second claim can be established in the same way. Both claims for $i=4$ can be established in the same way by replacing the event $\{1 \neq N_1(N_1(1))\}$ by $\{1 = N_1(N_1(1))\}$. We can obtain
\begin{align*}
  \E[T_4] = (1+O(n^{-1})) \E \Big[\Var \Big[ F_Y\big(Y_1 \wedge \tY_1\big) \Biggiven X_1 \Big]  \ind \Big( 1 = N_1(N_1(1)) \Big) \Big] + o(1),
\end{align*}
and
\begin{align*}
  \E[T_4^*] = \E \Big[\Var \Big[ F_Y\big(Y_1 \wedge \tY_1\big) \Biggiven X_1 \Big] \ind \Big( 1 = N_1(N_1(1)) \Big) \Big] + o(1).
\end{align*}

\vspace{0.2cm}

{\bf Part III.} $i=3$.

Conditional on $\mX$, let $A_1 = A_1(\mX) := \{j: j \neq 1, N_1(j) = N_1(1)\}$% \fbox{change the notation $a_3$ to $A$? where is $3$ coming from?}
, i.e., the set of all indices $j$ such that $X_j$ and $X_1$ share the same NN. Let $\pi(1)$ be the random variable that assigns the same probability mass on the elements of $A_1$, and are independent of $\mY$ conditional on $\mX$, i.e., for any $j \in A_1$, $\P(\pi(1) = j) = 1/\lvert A_1 \rvert$. Then
\begin{align*}
  & \E [T_3] = \frac{1}{n^3} \E \Big[\sum_{\substack{i \neq j \\ N_1(i) = N_1(j)}} \Cov\Big[\min\big\{R_i, R_{N_1(i)}\big\}, \min\big\{R_j, R_{N_1(j)}\big\} \Biggiven \mX \Big] \Big]\\
  = & \frac{1}{n^3} \E \Big[\sum_{i=1}^n \sum_{j:j\neq i, N_1(i) = N_1(j)} \Cov\Big[\min\big\{R_i, R_{N_1(i)}\big\}, \min\big\{R_j, R_{N_1(j)}\big\} \Biggiven \mX \Big] \Big]\\
  = & \frac{1}{n^2} \E \Big[\sum_{j \in A_1} \Cov\Big[\min\big\{R_1, R_{N_1(1)}\big\}, \min\big\{R_j, R_{N_1(j)}\big\} \Biggiven \mX \Big] \Big]\\
  = & \frac{1}{n^2} \E \Big[\lvert A_1 \rvert \Cov\Big[\min\big\{R_1, R_{N_1(1)}\big\}, \min\big\{R_{\pi(1)}, R_{N_1(1)}\big\} \Biggiven \mX \Big] \Big]\\
  = & \frac{(n-1)(n-2)}{n^2} \E \Big[\lvert A_1 \rvert \Cov\Big[\ind\big(Y_2 \le Y_1 \wedge Y_{N_1(1)}\big), \ind\big(Y_3 \le Y_{\pi(1)} \wedge Y_{N_1(1)}\big) \Biggiven \mX \Big] \Big] + O\Big(\frac{\E[\lvert A_1 \rvert]}{n}\Big).
\end{align*}

From Lemma 20.6 together with Theorem 20.16 in \cite{biau2015lectures}, $\lvert A_1 \rvert$ is always bounded by a constant only depending on $d$. Then
\begin{align*}
  \E [T_3] = (1+O(n^{-1})) \E \Big[\lvert A_1 \rvert \Cov\Big[\ind\big(Y_2 \le Y_1 \wedge \tY_1\big), \ind\big(Y_3 \le \tY_1' \wedge \tY_1\big) \Biggiven \mX \Big] \Big] + o(1).
\end{align*}

Recall the definition of $\mX^{-2,3}$ and $N^{-2,3}(\cdot)$ in the second part. Let 
\[
  A_1^{-2,3} = A_1^{-2,3}(\mX^{-2,3}) := \{j: j \neq 1, N^{-2,3}(j) = N^{-2,3}(1)\}. 
\]
We consider the event $N_1(1) \neq 2,3$. For any $j \in A_1$, we have $j \neq 1, N_1(j) = N_1(1)$. If $j \neq 2,3$, then $N^{-2,3}(j) = N^{-2,3}(1)$ from $N_1(1) \neq 2,3$, and then $j \in A_1^{-2,3}$. Then
\[
  \lvert A_1 \setminus A_1^{-2,3} \rvert \le \ind(N_1(2) = N_1(1)) + \ind(N_1(3) = N_1(1)).
\]

On the other hand, for any $j \in A_1^{-2,3}$, we have $N^{-2,3}(j) = N^{-2,3}(1) = N_1(1)$. If $N_1(j) \neq N_1(1)$, then the possible case is $N_1(j)=2,3, N_1(N_1(j))=2,3, N_1(1) = N_1(N_1(N_1(j)))$, or $N_1(j)=2,3, N_1(N_1(j))\neq2,3, N_1(1) = N_1(N_1(j))$. Then
\begin{align*}
   \lvert A_1^{-2,3} \setminus A_1 \rvert \le& \sum_{j:N_1(j)=2,3} \Big(\ind(N_1(1) = N_1(N_1(N_1(j)))) + \ind(N_1(1) = N_1(N_1(j))) \Big)\\
  \le & \sum_{j:N_1(j)=2} \Big(\ind(N_1(1) = N_1(N_1(2))) + \ind(N_1(1) = N_1(2)) \Big) \\
  & + \sum_{j:N_1(j)=3} \Big(\ind(N_1(1) = N_1(N_1(3))) + \ind(N_1(1) = N_1(3)) \Big).
\end{align*}

Notice that for any $i \in \zahl{n}$, the number of $j \in \zahl{n}$ such that $N_1(j) = i$ is always bounded by a constant depending only on $d$. Then $\E[\lvert A_1 \setminus A_1^{-2,3} \rvert], \E[\lvert A_1^{-2,3} \setminus A_1 \rvert] = O(n^{-1})$. Notice that $\P(N_1(1)=2,3) = O(n^{-1})$. Then
\begin{align*}
  &\E \Big[\lvert A_1 \rvert \Cov\Big[\ind\big(Y_2 \le Y_1 \wedge \tY_1\big), \ind\big(Y_3 \le \tY_1' \wedge \tY_1\big) \Biggiven \mX \Big] \Big] \\
  = & \E \Big[\lvert A_1^{-2,3} \rvert \Cov\Big[\ind\big(Y_2 \le Y_1 \wedge \tY_1\big), \ind\big(Y_3 \le \tY_1' \wedge \tY_1\big) \Biggiven \mX \Big] \Big] + O(n^{-1})\\
  = & \E \Big[\lvert A_1^{-2,3} \rvert \E \Big[\Cov\Big[\ind\big(Y_2 \le Y_1 \wedge \tY_1\big), \ind\big(Y_3 \le \tY_1 \wedge \tY_1' \big) \Biggiven \mX \Big] \Biggiven \mX^{-2,3} \Big] \Big] + O(n^{-1})\\
  = & \E \Big[\lvert A_1^{-2,3} \rvert \Cov\Big[F_Y\big(Y_1 \wedge \tY_1\big), F_Y\big(\tY_1 \wedge \tY_1' \big) \Biggiven X_1 \Big] \Big] + O(n^{-1})\\
  = & \E \Big[\lvert A_1 \rvert \Cov\Big[F_Y\big(Y_1 \wedge \tY_1\big), F_Y\big(\tY_1 \wedge \tY_1' \big) \Biggiven X_1 \Big] \Big] + O(n^{-1}).
\end{align*}

We then obtain
\begin{align*}
  \E [T_3] = (1+O(n^{-1})) \E \Big[\lvert A_1 \rvert \Cov\Big[F_Y\big(Y_1 \wedge \tY_1\big), F_Y\big(\tY_1 \wedge \tY_1' \big) \Biggiven X_1 \Big] \Big] + o(1).
\end{align*}

For $T_3^*$,
\begin{align*}
   \E [T_3^*] =& \frac{1}{n} \E \Big[ \sum_{\substack{i \neq j \\ N_1(i) = N_1(j)}} \Cov\Big[\min\big\{F_Y(Y_i), F_Y(Y_{N_1(i)})\big\}, \min\big\{F_Y(Y_j), F_Y(Y_{N_1(j)})\big\} \Biggiven \mX \Big] \Big]\\
  = & \E \Big[\lvert A_1 \rvert \Cov\Big[F_Y\big(Y_1 \wedge Y_{N_1(1)}\big), F_Y\big(Y_{\pi(1)} \wedge Y_{N_1(1)}\big) \Biggiven \mX \Big] \Big]\\
  = &  \E \Big[\lvert A_1 \rvert \Cov\Big[F_Y\big(Y_1 \wedge \tY_1\big), F_Y\big(\tY_1 \wedge \tY_1' \big) \Biggiven X_1 \Big] \Big] + o(1).
\end{align*}

Then we complete the proof of the first claim and the second claim can be similarly derived.
\end{proof}

\subsection{Proof of Lemma~\ref{lemma:hayek2}}

\begin{proof}[Proof of Lemma~\ref{lemma:hayek2}]
Since $[(X_i,Y_i)]_{i=1}^n$ are i.i.d. and $\min\big\{R_i, R_{N_1(i)}\big\}=\sum_{k=1}^n \ind\big(Y_k \le Y_i \wedge Y_{N_1(i)}\big)$ for any $i \in \zahl{n}$, we have
\begin{align*}
  &\E[T_5] = \E \Big[\frac{1}{n^3} \sum_{i,j,N_1(i),N_1(j)~{\rm distinct}} \Cov\Big[\min\big\{R_i, R_{N_1(i)}\big\}, \min\big\{R_j, R_{N_1(j)}\big\} \Biggiven \mX \Big] \Big]\\
  =& \frac{N_1(N-1)}{n^3}  \E \Big[\Cov\Big[\min\big\{R_1, R_{N_1(1)}\big\}, \min\big\{R_2, R_{N_1(2)}\big\} \Biggiven \mX \Big] \ind\Big(1,2,N_1(1),N_1(2)~{\rm distinct}\Big)\Big]\\
  =& \frac{n-1}{n^2}  \E \Big[\Cov\Big[\sum_{k=1}^n \ind\big(Y_k \le Y_1 \wedge Y_{N_1(1)}\big), \sum_{\ell=1}^n \ind\big(Y_\ell \le Y_2 \wedge Y_{N_1(2)}\big) \Biggiven \mX \Big] \ind\Big(1,2,N_1(1),N_1(2)~{\rm distinct}\Big)\Big].
\end{align*}
Notice that for $k,\ell \neq 1,2,N_1(1),N_1(2)$ and $k \neq \ell$, under the event $\{1,2,N_1(1),N_1(2)~{\rm distinct}\}$, we have
\[
  \Cov\Big[\ind\big(Y_k \le Y_1 \wedge Y_{N_1(1)}\big), \ind\big(Y_\ell \le Y_2 \wedge Y_{N_1(2)}\big) \Biggiven \mX \Big] = 0.
\]

Then by the symmetry,
\begin{align*}
  & \E \Big[\Cov\Big[\sum_{k=1}^n \ind\big(Y_k \le Y_1 \wedge Y_{N_1(1)}\big), \sum_{\ell=1}^n \ind\big(Y_\ell \le Y_2 \wedge Y_{N_1(2)}\big) \Biggiven \mX \Big] \ind\Big(1,2,N_1(1),N_1(2)~{\rm distinct}\Big)\Big]\\
  =& (n-2) \Big\{ \E \Big[\Cov\Big[\ind\big(Y_3 \le Y_1 \wedge Y_{N_1(1)}\big), \ind\big(Y_3 \le Y_2 \wedge Y_{N_1(2)}\big) \Biggiven \mX \Big] \ind\Big(1,2,N_1(1),N_1(2)~{\rm distinct}\Big)\Big]\\
  &+ 2\E \Big[\Cov\Big[\ind\big(Y_1 \le Y_1 \wedge Y_{N_1(1)}\big), \ind\big(Y_3 \le Y_2 \wedge Y_{N_1(2)}\big) \Biggiven \mX \Big] \ind\Big(1,2,N_1(1),N_1(2)~{\rm distinct}\Big)\Big]\\
  &+ 2\E \Big[\Cov\Big[\ind\big(Y_{N_1(1)} \le Y_1 \wedge Y_{N_1(1)}\big), \ind\big(Y_3 \le Y_2 \wedge Y_{N_1(2)}\big) \Biggiven \mX \Big] \ind\Big(1,2,N_1(1),N_1(2)~{\rm distinct}\Big)\Big]\\
  &+ 2\E \Big[\Cov\Big[\ind\big(Y_2 \le Y_1 \wedge Y_{N_1(1)}\big), \ind\big(Y_3 \le Y_2 \wedge Y_{N_1(2)}\big) \Biggiven \mX \Big] \ind\Big(1,2,N_1(1),N_1(2)~{\rm distinct}\Big)\Big]\\
  &+ 2\E \Big[\Cov\Big[\ind\big(Y_{N_1(2)} \le Y_1 \wedge Y_{N_1(1)}\big), \ind\big(Y_3 \le Y_2 \wedge Y_{N_1(2)}\big) \Biggiven \mX \Big] \ind\Big(1,2,N_1(1),N_1(2)~{\rm distinct}\Big)\Big] \Big\} + O(1),
\end{align*}
where $O(1)$ is from the boundedness of the indicator function and the number of the remaining terms and the overlap terms is $O(1)$.

Noticing $\P(1,2,N_1(1),N_1(2)~{\rm distinct}) = 1-O(n^{-1})$, we have
\begin{align*}
  & \E \Big[\Cov\Big[\ind\big(Y_3 \le Y_1 \wedge Y_{N_1(1)}\big), \ind\big(Y_3 \le Y_2 \wedge Y_{N_1(2)}\big) \Biggiven \mX \Big] \ind\Big(1,2,N_1(1),N_1(2)~{\rm distinct}\Big)\Big]\\
  = & \E \Big[\Cov\Big[\ind\big(Y_3 \le Y_1 \wedge \tY_1\big), \ind\big(Y_3 \le Y_2 \wedge \tY_2\big) \Biggiven \mX \Big] \ind\Big(1,2,N_1(1),N_1(2)~{\rm distinct}\Big)\Big] + o(1)\\
  = & \E \Big[\Cov\Big[\ind\big(Y_3 \le Y_1 \wedge \tY_1\big), \ind\big(Y_3 \le Y_2 \wedge \tY_2\big) \Biggiven \mX \Big] \Big] + o(1) \\
  = & \E \Big[\Cov\Big[\ind\big(Y_3 \le Y_1 \wedge \tY_1\big), \ind\big(Y_3 \le Y_2 \wedge \tY_2\big) \Biggiven X_1,X_2,X_3 \Big] \Big] + o(1).
\end{align*}

Similarly,
\begin{align*}
  & \E \Big[\Cov\Big[\ind\big(Y_1 \le Y_1 \wedge Y_{N_1(1)}\big), \ind\big(Y_3 \le Y_2 \wedge Y_{N_1(2)}\big) \Biggiven \mX \Big] \ind\Big(1,2,N_1(1),N_1(2)~{\rm distinct}\Big)\Big]\\
  = & \E \Big[\Cov\Big[\ind\big(Y_1 \le Y_1 \wedge \tY_1\big), \ind\big(Y_3 \le Y_2 \wedge \tY_2\big) \Biggiven \mX \Big]\Big] + o(1)\\
  = & \E \Big[\Cov\Big[\ind\big(Y_1 \le Y_1 \wedge \tY_1\big), F_Y\big(Y_2 \wedge \tY_2\big) \Biggiven X_1,X_2 \Big]\Big] + o(1),
\end{align*}
where the last step is by expanding the covariance in the same way as \eqref{eq:hayek5}.

Then it holds true that
\begin{align*}
   \E[T_5] =& (1+O(n^{-1})) \Big\{ \E \Big[\Cov\Big[\ind\big(Y_3 \le Y_1 \wedge \tY_1\big), \ind\big(Y_3 \le Y_2 \wedge \tY_2\big) \Biggiven X_1,X_2,X_3 \Big] \Big]\\
  &+ 2\E \Big[\Cov\Big[\ind\big(Y_1 \le Y_1 \wedge \tY_1\big), F_Y\big(Y_2 \wedge \tY_2\big) \Biggiven X_1,X_2 \Big] \Big]\\
  &+ 2\E \Big[\Cov\Big[\ind\big(\tY_1 \le Y_1 \wedge \tY_1\big), F_Y\big(Y_2 \wedge \tY_2\big) \Biggiven X_1,X_2 \Big] \Big]\\
  &+ 2\E \Big[\Cov\Big[\ind\big(Y_2 \le Y_1 \wedge \tY_1\big), F_Y\big(Y_2 \wedge \tY_2\big) \Biggiven X_1,X_2 \Big] \Big]\\
  &+ 2\E \Big[\Cov\Big[\ind\big(\tY_2 \le Y_1 \wedge \tY_1\big), F_Y\big(Y_2 \wedge \tY_2\big) \Biggiven X_1,X_2 \Big] \Big] \Big\} + o(1)\\
  =& (1+O(n^{-1})) \Big\{ \E \Big[\Cov\Big[\ind\big(Y_3 \le Y_1 \wedge \tY_1\big), \ind\big(Y_3 \le Y_2 \wedge \tY_2\big) \Biggiven X_1,X_2,X_3 \Big] \Big]\\
  &+ 4\E \Big[\Cov\Big[\ind\big(Y_2 \le Y_1 \wedge \tY_1\big), F_Y\big(Y_2 \wedge \tY_2\big) \Biggiven X_1,X_2 \Big] \Big]\Big\} + o(1).
  \yestag\label{eq:hayek21}
\end{align*}

On the other hand,
\begin{align*}
  & \E[T_5'] = \E \Big[\frac{1}{n^2} \sum_{i,j,N_1(i),N_1(j)~{\rm distinct}} \Cov\Big[ \min\big\{R_i, R_{N_1(i)}\big\}, \min\big\{F_Y(Y_j), F_Y(Y_{N_1(j)})\big\} \Biggiven \mX \Big] \Big]\\
  = & \frac{N_1(N-1)}{n^2} \E \Big[ \Cov\Big[ \min\big\{R_1, R_{N_1(1)}\big\}, \min\big\{F_Y(Y_2), F_Y(Y_{N_1(2)})\big\} \Biggiven \mX \Big] \ind\Big(1,2,N_1(1),N_1(2)~{\rm distinct}\Big) \Big]\\
  = & \frac{n-1}{n} \Big\{ \E \Big[\Cov\Big[\ind\big(Y_1 \le Y_1 \wedge Y_{N_1(1)}\big), F_Y\big(Y_2 \wedge Y_{N_1(2)}\big) \Biggiven \mX \Big] \ind\Big(1,2,N_1(1),N_1(2)~{\rm distinct}\Big)\Big]\\
  &+ \E \Big[\Cov\Big[\ind\big(Y_{N_1(1)} \le Y_1 \wedge Y_{N_1(1)}\big), F_Y\big(Y_2 \wedge Y_{N_1(2)}\big) \Biggiven \mX \Big] \ind\Big(1,2,N_1(1),N_1(2)~{\rm distinct}\Big)\Big]\\
  &+ \E \Big[\Cov\Big[\ind\big(Y_2 \le Y_1 \wedge Y_{N_1(1)}\big), F_Y\big(Y_2 \wedge Y_{N_1(2)}\big) \Biggiven \mX \Big] \ind\Big(1,2,N_1(1),N_1(2)~{\rm distinct}\Big)\Big]\\
  &+ \E \Big[\Cov\Big[\ind\big(Y_{N_1(2)} \le Y_1 \wedge Y_{N_1(1)}\big), F_Y\big(Y_2 \wedge Y_{N_1(2)}\big) \Biggiven \mX \Big] \ind\Big(1,2,N_1(1),N_1(2)~{\rm distinct}\Big)\Big] \Big\}\\
  =& (1+O(n^{-1})) \Big\{ 2\E \Big[\Cov\Big[\ind\big(Y_1 \le Y_1 \wedge \tY_1\big), F_Y\big(Y_2 \wedge \tY_2\big) \Biggiven X_1,X_2 \Big] \Big]\\
  &+ 2\E \Big[\Cov\Big[\ind\big(Y_2 \le Y_1 \wedge \tY_1\big), F_Y\big(Y_2 \wedge \tY_2\big) \Biggiven X_1,X_2 \Big] \Big]\Big\} + o(1)\\
  =& 2 (1+O(n^{-1})) \E \Big[\Cov\Big[\ind\big(Y_2 \le Y_1 \wedge \tY_1\big), F_Y\big(Y_2 \wedge \tY_2\big) \Biggiven X_1,X_2 \Big] \Big] + o(1).
  \yestag\label{eq:hayek22}
\end{align*}

Combining \eqref{eq:hayek21} and \eqref{eq:hayek22} completes the proof of the first claim.

The second claim is direct from the definition of $T_5^*$.
\end{proof}

\subsection{Proof of Lemma~\ref{lemma:hayek3}}

\begin{proof}[Proof of Lemma~\ref{lemma:hayek3}]
Since $[(X_i,Y_i)]_{i=1}^n$ are i.i.d. and $\min\big\{R_i, R_{N_1(i)}\big\}=\sum_{k=1}^n \ind\big(Y_k \le Y_i \wedge Y_{N_1(i)}\big)$ for any $i \in \zahl{n}$, we have
\begin{align*}
  & \E [T_6'] = \E \Big[\frac{1}{n^3} \sum_{i=1}^n \Cov\Big[ \min\big\{R_i, R_{N_1(i)}\big\}, \sum_{\substack{i,j=1\\i \neq j}}^n \min\big\{F_Y(Y_i), F_Y(Y_j)\big\} \Biggiven \mX \Big]\Big]\\
  = & \frac{1}{n^2} \E \Big[\Cov\Big[ \min\big\{R_1, R_{N_1(1)}\big\}, \sum_{\substack{i,j=1\\i \neq j}}^n \min\big\{F_Y(Y_i), F_Y(Y_j)\big\} \Biggiven \mX \Big]\Big]\\
  = & \frac{1}{n^2} \E \Big[\Cov\Big[ \sum_{k=1}^n \ind\big(Y_k \le Y_1 \wedge Y_{N_1(1)}\big), \sum_{\substack{i,j=1\\i \neq j}}^n F_Y\big(Y_i \wedge Y_j\big) \Biggiven \mX \Big]\Big]\\
  = & \frac{1}{n^2} \sum_{k=1}^n \E \Big[\Cov\Big[ \ind\big(Y_k \le Y_1 \wedge Y_{N_1(1)}\big), \sum_{\substack{i,j=1, i \neq j\\i=1,N_1(1),k~{\rm or}~j=1,N_1(1),k}}^n F_Y\big(Y_i \wedge Y_j\big) \Biggiven \mX \Big]\Big]\\
  = & \frac{n-1}{n^2} \E \Big[\Cov\Big[ \ind\big(Y_3 \le Y_1 \wedge Y_{N_1(1)}\big), \sum_{\substack{i,j=1, i \neq j\\i=1,N_1(1),3~{\rm or}~j=1,N_1(1),3}}^n F_Y\big(Y_i \wedge Y_j\big) \Biggiven \mX \Big]\Big] + O(n^{-1})\\
  = & \frac{(n-1)(n-2)}{n^2} \Big\{ 2\E\Big[\Cov\Big[ \ind\big(Y_3 \le Y_1 \wedge Y_{N_1(1)}\big), F_Y\big(Y_1 \wedge Y_2\big) \Biggiven \mX \Big]\Big]\\
  & + 2\E\Big[\Cov\Big[ \ind\big(Y_3 \le Y_1 \wedge Y_{N_1(1)}\big), F_Y\big(Y_{N_1(1)} \wedge Y_2\big) \Biggiven \mX \Big]\Big]\\
  & + 2\E\Big[\Cov\Big[ \ind\big(Y_3 \le Y_1 \wedge Y_{N_1(1)}\big), F_Y\big(Y_3 \wedge Y_2\big) \Biggiven \mX \Big]\Big]\Big\} + O(n^{-1})\\
  = & (1+O(n^{-1})) \Big\{ 2\E\Big[\Cov\Big[ \ind\big(Y_3 \le Y_1 \wedge \tY_1\big), F_Y\big(Y_1 \wedge Y_2\big) \Biggiven \mX \Big]\Big]\\
  & + 2\E\Big[\Cov\Big[ \ind\big(Y_3 \le Y_1 \wedge \tY_1\big), F_Y\big(\tY_1 \wedge Y_2\big) \Biggiven \mX \Big]\Big]\\
  & + 2\E\Big[\Cov\Big[ \ind\big(Y_3 \le Y_1 \wedge \tY_1\big), F_Y\big(Y_3 \wedge Y_2\big) \Biggiven \mX \Big]\Big]\Big\} + o(1)\\
  = & (1+O(n^{-1})) \Big\{ 4\E\Big[\Cov\Big[ F_Y\big(Y_1 \wedge \tY_1\big), F_Y\big(Y_1 \wedge Y_2\big) \Biggiven X_1,X_2 \Big]\Big]\\
  & + 2\E\Big[\Cov\Big[ \ind\big(Y_3 \le Y_1 \wedge \tY_1\big), F_Y\big(Y_3 \wedge Y_2\big) \Biggiven \mX \Big]\Big]\Big\} + o(1).
  \yestag\label{eq:hayek31}
\end{align*}

On the other hand,
\begin{align*}
   \E [T_6^*] = & \E \Big[\frac{1}{n^2} \sum_{i=1}^n \Cov\Big[ \min\big\{F_Y(Y_i), F_Y(Y_{N_1(i)})\big\}, \sum_{\substack{i,j=1\\i \neq j}}^n \min\big\{F_Y(Y_i), F_Y(Y_j)\big\} \Biggiven \mX \Big]\Big]\\
  = & \frac{1}{n} \E \Big[\Cov\Big[ \min\big\{F_Y(Y_1), F_Y(Y_{N_1(1)})\big\}, \sum_{\substack{i,j=1\\i \neq j}}^n \min\big\{F_Y(Y_i), F_Y(Y_j)\big\} \Biggiven \mX \Big]\\
  = & \frac{n-1}{n} \Big\{ 2\E\Big[\Cov\Big[ F_Y\big(Y_1 \wedge Y_{N_1(1)}\big), F_Y\big(Y_1 \wedge Y_2\big) \Biggiven \mX \Big]\Big]\\
  & + 2\E\Big[\Cov\Big[ F_Y\big(Y_1 \wedge Y_{N_1(1)}\big), F_Y\big(Y_{N_1(1)} \wedge Y_2\big) \Biggiven \mX \Big]\Big] \Big\} + O(n^{-1})\\
  = & (1+O(n^{-1})) \Big\{ 2\E\Big[\Cov\Big[ F_Y\big(Y_1 \wedge \tY_1\big), F_Y\big(Y_1 \wedge Y_2\big) \Biggiven \mX \Big]\Big]\\
  & + 2\E\Big[\Cov\Big[ F_Y\big(Y_1 \wedge \tY_1\big), F_Y\big(\tY_1 \wedge Y_2\big) \Biggiven \mX \Big]\Big] \Big\} + o(1)\\
  = & (1+O(n^{-1})) 4\E\Big[\Cov\Big[ F_Y\big(Y_1 \wedge \tY_1\big), F_Y\big(Y_1 \wedge Y_2\big) \Biggiven X_1,X_2 \Big]\Big] + o(1).
  \yestag\label{eq:hayek32}
\end{align*}

Combining \eqref{eq:hayek31} and \eqref{eq:hayek32} and expanding the covariance, we obtain
\begin{align*}
   \lim_{n \to \infty} [\E[T_6'] - \E[T_6^*]] =& 2\E\Big[\Cov\Big[ \ind\big(Y_3 \le Y_1 \wedge \tY_1\big), F_Y\big(Y_3 \wedge Y_2\big) \Biggiven \mX \Big]\Big]\\
  = & 2\E \Big[\Cov\Big[\ind\big(Y_2 \le Y_1 \wedge \tY_1\big), F_Y\big(Y_2 \wedge Y \big) \Biggiven X_1,X_2 \Big] \Big],
\end{align*}
and thus complete the proof.
\end{proof}

\subsection{Proof of Lemma~\ref{lemma:hayek4}}

\begin{proof}[Proof of Lemma~\ref{lemma:hayek4}]
Since $[(X_i,Y_i)]_{i=1}^n$ are i.i.d., we have 
\begin{align*}
   \E [T_7^*] =& \E \Big[ \frac{1}{n^3} \Var\Big[\sum_{\substack{i,j=1\\i \neq j}}^n \min\big\{F_Y(Y_i), F_Y(Y_j)\big\} \Biggiven \mX \Big] \Big]\\
  = & \frac{1}{n^3} \E \Big[ \Cov\Big[\sum_{\substack{i,j=1\\i \neq j}}^n F_Y\big(Y_i \wedge Y_j\big), \sum_{\substack{k,\ell=1\\k \neq \ell}}^n F_Y\big(Y_k \wedge Y_\ell\big)  \Biggiven \mX \Big] \Big] \\
  = & \frac{1}{n^3} \E \Big[ \sum_{\substack{i,j=1\\i \neq j}}^n \sum_{\substack{k,\ell=1\\k \neq \ell}}^n \Cov\Big[ F_Y\big(Y_i \wedge Y_j\big),  F_Y\big(Y_k \wedge Y_\ell\big)  \Biggiven \mX \Big] \Big].
\end{align*}
Notice that when $i,j,k,\ell$ are distinct, the covariance is zero. Then
\begin{align*}
  \E [T_7^*] =& \frac{4N_1(N-1)(n-2)}{n^3} \E \Big[ \Cov\Big[ F_Y\big(Y_1 \wedge Y_2\big),  F_Y\big(Y_1 \wedge Y_3\big)  \Biggiven \mX \Big] \Big] + \frac{2N_1(N-1)}{n^3} \E \Big[ \Var\Big[ F_Y\big(Y_1 \wedge Y_2\big)\Biggiven \mX \Big] \Big]\\
  = & (1+O(n^{-1})) 4 \E \Big[ \Cov\Big[ F_Y\big(Y_1 \wedge Y_2\big),  F_Y\big(Y_1 \wedge Y_3\big)  \Biggiven \mX \Big] \Big] + O(n^{-1}).
\end{align*}
Expanding the covariance, we obtain
\begin{align*}
  & \lim_{n \to \infty} \E[T_7^*] = 4 \E \Big[ \Cov\Big[ F_Y\big(Y_1 \wedge Y_2\big),  F_Y\big(Y_1 \wedge Y_3\big)  \Biggiven \mX \Big] \Big]   =  4 \E \Big[\Cov\Big[F_Y\big(Y_1 \wedge Y\big), F_Y\big(Y_1 \wedge \tY\big) \Biggiven X_1 \Big] \Big]
\end{align*}
and thus complete the proof.
\end{proof}

\subsection{Proof of Lemma~\ref{lemma:hayek8}}

\begin{proof}[Proof of Lemma~\ref{lemma:hayek8}]
For $T_8^*$,
\begin{align*}
  \E [T_8^*] =& \E \Big[\frac{1}{n} \sum_{i=1}^n \Cov\Big[ \min\big\{F_Y(Y_i), F_Y(Y_{N_1(i)})\big\}, \sum_{i=1}^n g(Y_i) \Biggiven \mX \Big] \Big]\\
  = & \E \Big[\Cov\Big[ F_Y\big(Y_1 \wedge Y_{N_1(1)}\big), \sum_{i=1}^n g(Y_i) \Biggiven \mX \Big] \Big] \\
  = & \E \Big[\Cov\Big[ F_Y\big(Y_1 \wedge Y_{N_1(1)}\big), g(Y_1) \Biggiven \mX \Big] \Big] + \E \Big[\Cov\Big[ F_Y\big(Y_1 \wedge Y_{N_1(1)}\big), g(Y_{N_1(1)}) \Biggiven \mX \Big] \Big]\\
  = & 2\E \Big[\Cov\Big[F_Y\big(Y_1 \wedge \tY_1\big), g(Y_1) \Biggiven X_1\Big] \Big] + o(1).
\end{align*}

For $T_9^*$,
\begin{align*}
  \E [T_9^*] =& \E \Big[\frac{1}{n^2} \Cov\Big[ \sum_{\substack{i,j=1\\i \neq j}}^n \min\big\{F_Y(Y_i), F_Y(Y_j)\big\} , \sum_{i=1}^n g(Y_i) \Biggiven \mX \Big] \Big]\\
  = & \frac{N_1(N-1)}{n^2} \E \Big[ \Cov\Big[ F_Y\big(Y_1 \wedge Y_2\big), \sum_{i=1}^n g(Y_i) \Biggiven \mX \Big] \Big]\\
  = & \frac{N_1(N-1)}{n^2} \Big[ \E \Big[\Cov\Big[ F_Y\big(Y_1 \wedge Y_2\big), g(Y_1) \Biggiven \mX \Big] \Big] + \E \Big[ \Cov\Big[ F_Y\big(Y_1 \wedge Y_2\big), g(Y_2) \Biggiven \mX \Big] \Big] \Big]\\
  = & 2(1+O(n^{-1})) \E \Big[\Cov\Big[F_Y\big(Y_1 \wedge Y_2\big), g(Y_1) \Biggiven X_1,X_2\Big] \Big]\\
  = & 2(1+O(n^{-1})) \E \Big[\Cov\Big[F_Y\big(Y_1 \wedge Y\big), g(Y_1) \Biggiven X_1\Big] \Big].
\end{align*}

For $T_7'$, we have
\begin{align*}
   &\E [T_7'] \\
   =& \E \Big[ \frac{1}{n^2} \sum_{i=1}^n \Cov\Big[ \min\big\{R_i, R_{N_1(i)}\big\}, \sum_{i=1}^n g(Y_i) \Biggiven \mX \Big] \Big]\\
  = & \frac{1}{n} \E \Big[ \Cov\Big[ R_1 \wedge R_{N_1(1)}, \sum_{i=1}^n g(Y_i) \Biggiven \mX \Big] \Big] = \frac{1}{n} \E \Big[ \Cov\Big[  \sum_{k=1}^n \ind\big(Y_k \le Y_1 \wedge Y_{N_1(1)}\big), \sum_{i=1}^n g(Y_i) \Biggiven \mX \Big] \Big]\\
  = & \frac{n-1}{n}\E \Big[ \Cov\Big[  \ind\big(Y_2 \le Y_1 \wedge Y_{N_1(1)}\big), \sum_{i=1}^n g(Y_i) \Biggiven \mX \Big] \Big] + O(n^{-1})\\
  = & (1+O(n^{-1})) \Big[ \E \Big[ \Cov\Big[  \ind\big(Y_2 \le Y_1 \wedge Y_{N_1(1)}\big), g(Y_1) \Biggiven \mX \Big] \Big] + \E \Big[ \Cov\Big[  \ind\big(Y_2 \le Y_1 \wedge Y_{N_1(1)}\big), g(Y_2) \Biggiven \mX \Big] \Big] \\
  & + \E \Big[ \Cov\Big[  \ind\big(Y_2 \le Y_1 \wedge Y_{N_1(1)}\big), g(Y_{N_1(1)}) \Biggiven \mX \Big] \Big]  \Big] + O(n^{-1})\\
  = & (1+O(n^{-1})) \Big[ 2\E \Big[\Cov\Big[\ind\big(Y_2 \le Y_1 \wedge \tY_1\big), g(Y_1) \Biggiven X_1\Big] \Big] + \E \Big[\Cov\Big[\ind\big(Y_2 \le Y_1 \wedge \tY_1\big), g(Y_2) \Biggiven X_1,X_2\Big] \Big] \Big] + o(1)\\
  = & (1+O(n^{-1})) \Big[ 2\E \Big[\Cov\Big[F_Y\big(Y_1 \wedge \tY_1\big), g(Y_1) \Biggiven X_1\Big] \Big] + \E \Big[\Cov\Big[\ind\big(Y_2 \le Y_1 \wedge \tY_1\big), g(Y_2) \Biggiven X_1,X_2\Big] \Big] \Big] + o(1).
\end{align*}

For $T_{10}^*$, the result is direct from the variance of the sample mean.
\end{proof}

\subsection{Proof of Lemma~\ref{lemma:hayek9}}

\begin{proof}[Proof of Lemma~\ref{lemma:hayek9}]
~\\
{\bf Part I.} $a_1 + 4a_2 + 4a_3$.

Recall that $G_X(t) = \P(Y \ge t \given X)$ and $h(t)= \E[G_X^2(t)]$. Then
\begin{align*}
  a_1 =& \E \Big[\Cov\Big[\ind\big(Y_3 \le Y_1 \wedge \tY_1\big), \ind\big(Y_3 \le Y_2 \wedge \tY_2\big) \Biggiven X_1,X_2,X_3 \Big] \Big]\\
  = & \E \Big[ \int G_{X_1}^2(t) G_{X_2}^2(t) \d \mu_{X_3}(t) - \Big(\int G_{X_1}^2(t) \d \mu_{X_3}(t) \Big) \Big(\int G_{X_2}^2(t) \d \mu_{X_3}(t) \d t \Big) \Big]\\
  = & \E \Big[ \int h^2(t) \d \mu_{X_3}(t) - \Big(\int h(t) \d \mu_{X_3}(t) \Big) \Big(\int h(t) \d \mu_{X_3}(t) \Big) \Big]\\
  = & \E \Big[ \Var \Big[ h(Y_1) \Biggiven X_1 \Big] \Big].
  \yestag\label{eq:hayek91}
\end{align*}

Let $h'(t):= F_Y(t) - F_Y^2(t)/2$. Notice that for $Y \sim F_Y$, we have $F_Y(Y) \sim U(0,1)$ from the probability integral transform. Then
\begin{align*}
   a_3 =& \E \Big[\Cov\Big[F_Y\big(Y_1 \wedge Y\big), F_Y\big(Y_1 \wedge \tY\big) \Biggiven X_1 \Big] \Big]\\
  = & \E \Big[ \int \E \Big[\Big(F_Y(t) \wedge F_Y(Y) \Big) \Big(F_Y(t) \wedge F_Y(\tY) \Big) \Big] \d \mu_{X_1}(t) \\
  & - \Big(\int \E \Big[F_Y(t) \wedge F_Y(Y) \Big] \d \mu_{X_1}(t) \Big) \Big(\int \E \Big[F_Y(t) \wedge F_Y(\tY) \Big] \d \mu_{X_1}(t) \Big) \Big]\\
  = & \E \Big[ \int (F_Y(t) -F_Y^2(t)/2)^2 \d \mu_{X_1}(t) \\
  & - \Big(\int (F_Y(t) -F_Y^2(t)/2) \d \mu_{X_1}(t) \Big) \Big(\int (F_Y(t) -F_Y^2(t)/2) \d \mu_{X_1}(t) \Big) \Big]\\
  = & \E \Big[ \int h'^2(t) \d \mu_{X_1}(t) - \Big(\int h'(t) \d \mu_{X_1}(t) \Big) \Big(\int h'(t) \d \mu_{X_1}(t) \Big) \Big]\\
  = & \E \Big[ \Var \Big[ h'(Y_1) \Biggiven X_1 \Big] \Big].
  \yestag\label{eq:hayek92}
\end{align*}

In the same way as $a_1$ and $a_3$,
\begin{align*}
  a_2 =& \E \Big[\Cov\Big[\ind\big(Y_2 \le Y_1 \wedge \tY_1\big), F_Y\big(Y_2 \wedge Y \big) \Biggiven X_1,X_2 \Big] \Big]\\
  = & \E \Big[ \int G_{X_1}^2(t)(F_Y(t) -F_Y^2(t)/2) \d \mu_{X_2}(t) \\
  & - \Big(\int G_{X_1}^2(t) \d \mu_{X_2}(t) \d t \Big) \Big(\int (F_Y(t) -F_Y^2(t)/2) \d \mu_{X_2}(t) \Big) \Big]\\
  = & \E \Big[ \int h(t)h'(t) \d \mu_{X_2}(t) - \Big(\int h(t) \d \mu_{X_2}(t) \Big) \Big(\int h'(t) \d \mu_{X_2}(t) \Big) \Big]\\
  = & \E \Big[ \Cov \Big[ h(Y_1), h'(Y_1) \Biggiven X_1 \Big] \Big].
  \yestag\label{eq:hayek93}
\end{align*}

Noticing that
\begin{align*}
  & h(t) = \E[G_X^2(t)] = g(t) + G^2(t) = g(t) + (1-F_Y(t))^2 = 1 - 2h'(t) + g(t).
  \yestag\label{eq:hayek96}
\end{align*}
and combining \eqref{eq:hayek91}-\eqref{eq:hayek96} yields
\begin{align*}
   a_1 + 4a_2 + 4a_3 =& \E \Big[ \Var \Big[ h(Y_1) \Biggiven X_1 \Big] \Big] + 4 \E \Big[ \Cov \Big[ h(Y_1), h'(Y_1) \Biggiven X_1 \Big] \Big] + 4 \E \Big[ \Var \Big[ h'(Y_1) \Biggiven X_1 \Big] \Big]\\
  = & \E \Big[ \Var \Big[ h(Y_1) + 2h'(Y_1) \Biggiven X_1 \Big] \Big] \\
  = &\E \Big[ \Var \Big[ g(Y_1) \Biggiven X_1 \Big] \Big].
\end{align*}
The first part's proof is then complete.

\vspace{0.2cm}

{\bf Part II.} $b_3 - 2b_1 + 2b_2$.

In the same way as the first part,
\begin{align*}
   b_3 - 2b_1 =& \E \Big[\Cov\Big[\ind\big(Y_2 \le Y_1 \wedge \tY_1\big), g(Y_2) \Biggiven X_1,X_2 \Big] \Big]\\
  = & \E \Big[ \int G_{X_1}^2(t) g(t) \d \mu_{X_2}(t) - \Big(\int G_{X_1}^2(t) \d \mu_{X_2}(t) \Big) \Big(\int g(t) \d \mu_{X_2}(t) \Big) \Big]\\
  = & \E \Big[ \int h(t) g(t) \d \mu_{X_2}(t) - \Big(\int h(t) \d \mu_{X_2}(t) \Big) \Big(\int g(t) \d \mu_{X_2}(t) \Big) \Big]\\
  = & \E \Big[ \Cov \Big[ h(Y_1), g(Y_1) \Biggiven X_1 \Big] \Big]
  \yestag\label{eq:hayek94}
\end{align*}
and
\begin{align*}
   b_2 =& \E \Big[\Cov\Big[F_Y\big(Y_1 \wedge Y \big), g(Y_1) \Biggiven X_1\Big] \Big]\\
  = & \E \Big[ \int (F_Y(t) -F_Y^2(t)/2)^2 g(t) \d \mu_{X_1}(t) - \Big(\int (F_Y(t) -F_Y^2(t)/2)^2 \d \mu_{X_1}(t) \Big) \Big(\int g(t) \d \mu_{X_1}(t) \Big) \Big]\\
  = & \E \Big[ \int h'(t) g(t) \d \mu_{X_1}(t) - \Big(\int h'(t) \d \mu_{X_1}(t) \Big) \Big(\int g(t) \d \mu_{X_1}(t) \Big) \Big]\\
  = & \E \Big[ \Cov \Big[ h'(Y_1), g(Y_1) \Biggiven X_1 \Big] \Big].
  \yestag\label{eq:hayek95}
\end{align*}

Combining \eqref{eq:hayek96}-\eqref{eq:hayek95} yields
\begin{align*}
   b_3 - 2b_1 + 2b_2 =& \E \Big[ \Cov \Big[ h(Y_1), g(Y_1) \Biggiven X_1 \Big] \Big] + 2 \E \Big[ \Cov \Big[ h'(Y_1), g(Y_1) \Biggiven X_1 \Big] \Big]\\
  = & \E \Big[ \Cov \Big[ h(Y_1) + 2h'(Y_1), g(Y_1) \Biggiven X_1 \Big] \Big] \\
  = &\E \Big[ \Var \Big[ g(Y_1) \Biggiven X_1 \Big] \Big].
\end{align*}
The second part's proof is then complete.
\end{proof}

\subsection{Proof of Lemma~\ref{lemma:hayek5}}

\begin{proof}[Proof of Lemma~\ref{lemma:hayek5}]

From the boundedness of the indicator function and $h$, we have
\begin{align*}
  &\tT_1\\
   =&  \E\Big[\frac{1}{n} \sum_{i=1}^n \Big(\E \Big[\ind\big(Y_\ell \le Y_i \wedge Y_{N_1(i)}\big) \Biggiven \mX \Big] - h(X_\ell) \Big) \Big]^2\\
  = & \E\Big[\frac{1}{n} \sum_{i=1,i\neq \ell}^n \Big(\E \Big[\ind\big(Y_\ell \le Y_i \wedge Y_{N_1(i)}\big) \Biggiven \mX \Big] - h(X_\ell) \Big) \Big]^2 + O(n^{-1})\\
  = & (1+O(n^{-1})) \E\Big[\Big(\E \Big[\ind\big(Y_3 \le Y_1 \wedge Y_{N_1(1)}\big) \Biggiven \mX \Big] - h(X_3) \Big) \Big(\E \Big[\ind\big(Y_3 \le Y_2 \wedge Y_{N_1(2)}\big) \Biggiven \mX \Big] - h(X_3) \Big) \Big] + O(n^{-1})\\
  = & (1+O(n^{-1})) \E\Big[\Big(\E \Big[\ind\big(Y_3 \le Y_1 \wedge \tY_1\big) \Biggiven X_1,X_3 \Big] - h(X_3) \Big) \Big(\E \Big[\ind\big(Y_3 \le Y_2 \wedge \tY_2 \big) \Biggiven X_2,X_3 \Big] - h(X_3) \Big) \Big] + o(1).
\end{align*}
Since $[X_i]_{i=1}^n$ are i.i.d., by definition of $h$, we have
\begin{align*}
  & \E\Big[\Big(\E \Big[\ind\big(Y_3 \le Y_1 \wedge \tY_1\big) \Biggiven X_1,X_3 \Big] - h(X_3) \Big) \Big(\E \Big[\ind\big(Y_3 \le Y_2 \wedge \tY_2 \big) \Biggiven X_2,X_3 \Big] - h(X_3) \Big) \Big]\\
  = & \E\Big[ \E\Big[\Big(\E \Big[\ind\big(Y_3 \le Y_1 \wedge \tY_1\big) \Biggiven X_1,X_3 \Big] - h(X_3) \Big) \Big(\E \Big[\ind\big(Y_3 \le Y_2 \wedge \tY_2 \big) \Biggiven X_2,X_3 \Big] - h(X_3) \Big) \Biggiven X_3 \Big] \Big]\\
  = & \E\Big[ \E\Big[\E \Big[\ind\big(Y_3 \le Y_1 \wedge \tY_1\big) \Biggiven X_1,X_3 \Big] - h(X_3) \Biggiven X_3 \Big] \E \Big[\E \Big[\ind\big(Y_3 \le Y_2 \wedge \tY_2 \big) \Biggiven X_2,X_3 \Big] - h(X_3) \Biggiven X_3 \Big] \Big]\\
  = & \E\Big[ \Big(h(X_3) - h(X_3)\Big)^2 \Big] \\
  = & 0.
\end{align*}
We then complete the proof.
\end{proof}

\subsection{Proof of Lemma~\ref{lemma:hayek6}}

\begin{proof}[Proof of Lemma~\ref{lemma:hayek6}]
Since the indicator function and $F_Y$ are both bounded and $[X_i]_{i=1}^n$ are i.i.d.,
\begin{align*}
  \tT_2 =& \E\Big[ \frac{1}{n} \sum_{k=1,k \neq \ell}^n \Big(\E \Big[\ind\big(Y_k \le Y_\ell \wedge Y_{N_1(\ell)}\big) \Biggiven \mX \Big] - \E \Big[ F_Y\big(Y_\ell \wedge Y_{N_1(\ell)}\big) \Biggiven \mX \Big] \Big) \Big]^2\\
  = & (1+O(n^{-1})) \E\Big[ \Big(\E \Big[\ind\big(Y_2 \le Y_1 \wedge Y_{N_1(1)}\big) \Biggiven \mX \Big] - \E \Big[ F_Y\big(Y_1 \wedge Y_{N_1(1)}\big) \Biggiven \mX \Big] \Big) \\
  & \Big(\E \Big[\ind\big(Y_3 \le Y_1 \wedge Y_{N_1(1)}\big) \Biggiven \mX \Big] - \E \Big[ F_Y\big(Y_1 \wedge Y_{N_1(1)}\big) \Biggiven \mX \Big] \Big) \Big] + O(n^{-1})\\
  = & (1+O(n^{-1})) \E\Big[ \Big(\E \Big[\ind\big(Y_2 \le Y_1 \wedge \tY_1\big) \Biggiven X_1,X_2 \Big] - \E \Big[ F_Y\big(Y_1 \wedge \tY_1\big) \Biggiven X_1 \Big] \Big) \\
  & \Big(\E \Big[\ind\big(Y_3 \le Y_1 \wedge \tY_1\big) \Biggiven X_1,X_3 \Big] - \E \Big[ F_Y\big(Y_1 \wedge \tY_1\big) \Biggiven X_1 \Big] \Big) \Big] + o(1)\\
  = & (1+O(n^{-1})) \E\Big[ \E \Big[\E \Big[\ind\big(Y_2 \le Y_1 \wedge \tY_1\big) \Biggiven X_1,X_2 \Big] - \E \Big[ F_Y\big(Y_1 \wedge \tY_1\big) \Biggiven X_1 \Big] \Biggiven X_1 \Big] \\
  & \E \Big[\E \Big[\ind\big(Y_3 \le Y_1 \wedge \tY_1\big) \Biggiven X_1,X_3 \Big] - \E \Big[ F_Y\big(Y_1 \wedge \tY_1\big) \Biggiven X_1 \Big] \Biggiven X_1 \Big] \Big] + o(1)\\
  = & (1+O(n^{-1})) \E\Big[ \Big(\E \Big[ F_Y\big(Y_1 \wedge \tY_1\big) \Biggiven X_1 \Big] - \E \Big[ F_Y\big(Y_1 \wedge \tY_1\big) \Biggiven X_1 \Big] \Big)^2 \Big] + o(1) \\
  =& o(1). 
\end{align*}
The proof is then complete.
\end{proof}

\subsection{Proof of Lemma~\ref{lemma:hayek7}}

\begin{proof}[Proof of Lemma~\ref{lemma:hayek7}]
Lemma 20.6 together with Theorem 20.16 in \cite{biau2015lectures} show that $\lvert \{i:N_1(i) = \ell\} \rvert$, $\lvert \{i:\tN_1(i) = \ell\} \rvert$ are both bounded by a constant that only depend on $d$. Notice that $\P(N_1(1)=4),\P(\tN_1(1)=4) = O(n^{-1})$. We assume $\ell = 4$ without loss of generality. Then from the Cauchy–Schwarz inequality,
\begin{align*}
   \tT_3 =& \E\Big[\sum_{\substack{i=1\\N_1(i)=\ell~{\rm or}~\tN_1(i)=\ell}}^n \Big(\frac{1}{n} \sum_{k=1,k \neq \ell}^n \E \Big[\ind\big(Y_k \le Y_i \wedge Y_{N_1(i)}\big) \Biggiven \mX \Big] - \E \Big[ F_Y\big(Y_i \wedge Y_{N_1(i)}\big) \Biggiven \mX \Big] \Big) \Big]^2\\ 
  \le& \E\Big[\Big\lvert \Big\{i:N_1(i)=\ell~{\rm or}~\tN_1(i)=\ell\Big\} \Big\rvert \\
  & \sum_{\substack{i=1\\N_1(i)=\ell~{\rm or}~\tN_1(i)=\ell}}^n \Big(\frac{1}{n} \sum_{k=1,k \neq \ell}^n \E \Big[\ind\big(Y_k \le Y_i \wedge Y_{N_1(i)}\big) \Biggiven \mX \Big] - \E \Big[ F_Y\big(Y_i \wedge Y_{N_1(i)}\big) \Biggiven \mX \Big] \Big)^2 \Big]\\
  \lesssim & \E\Big[ \sum_{\substack{i=1\\N_1(i)=\ell~{\rm or}~\tN_1(i)=\ell}}^n \Big(\frac{1}{n} \sum_{k=1,k \neq \ell}^n \E \Big[\ind\big(Y_k \le Y_i \wedge Y_{N_1(i)}\big) \Biggiven \mX \Big] - \E \Big[ F_Y\big(Y_i \wedge Y_{N_1(i)}\big) \Biggiven \mX \Big] \Big)^2 \Big]\\
  = & (n-1) \E\Big[ \Big(\frac{1}{n} \sum_{k=1,k \neq \ell}^n \E \Big[\ind\big(Y_k \le Y_1 \wedge Y_{N_1(1)}\big) \Biggiven \mX \Big] - \E \Big[ F_Y\big(Y_1 \wedge Y_{N_1(1)}\big) \Biggiven \mX \Big] \Big)^2 \\
  & \ind\Big(N_1(1)=\ell~{\rm or}~\tN_1(1)=\ell\Big)\Big]\\
  \le & (n-1) \E\Big[ \Big(\frac{1}{n} \sum_{k=1,k \neq \ell}^n \E \Big[\ind\big(Y_k \le Y_1 \wedge Y_{N_1(1)}\big) \Biggiven \mX \Big] - \E \Big[ F_Y\big(Y_1 \wedge Y_{N_1(1)}\big) \Biggiven \mX \Big] \Big)^2 \\
  & \Big[\ind\Big(N_1(1)=\ell\Big) + \ind\Big(\tN_1(1)=\ell\Big) \Big]\Big]\\
  = & n(1+O(n^{-1})) \E\Big[ \Big(\E \Big[\ind\big(Y_2 \le Y_1 \wedge Y_{N_1(1)}\big) \Biggiven \mX \Big] - \E \Big[ F_Y\big(Y_1 \wedge Y_{N_1(1)}\big) \Biggiven \mX \Big] \Big) \\
  & \Big(\E \Big[\ind\big(Y_3 \le Y_1 \wedge Y_{N_1(1)}\big) \Biggiven \mX \Big] - \E \Big[ F_Y\big(Y_1 \wedge Y_{N_1(1)}\big) \Biggiven \mX \Big] \Big) \Big[\ind\Big(N_1(1)=\ell\Big) + \ind\Big(\tN_1(1)=\ell\Big) \Big] \Big] \\
  & + O\Big(\P\Big(N_1(1)=\ell\Big) + \P\Big(\tN_1(1)=\ell\Big)\Big)\\
  = & 2 (1+O(n^{-1})) \E\Big[ \Big(\E \Big[\ind\big(Y_2 \le Y_1 \wedge Y_{N_1(1)}\big) \Biggiven \mX \Big] - \E \Big[ F_Y\big(Y_1 \wedge Y_{N_1(1)}\big) \Biggiven \mX \Big] \Big) \\
  & \Big(\E \Big[\ind\big(Y_3 \le Y_1 \wedge Y_{N_1(1)}\big) \Biggiven \mX \Big] - \E \Big[ F_Y\big(Y_1 \wedge Y_{N_1(1)}\big) \Biggiven \mX \Big] \Big) \Big] + O(n^{-1}).
\end{align*}
The last step is true since $\sum_{k=4}^n \ind(N_1(1)=k) = 1-\ind(N_1(1)=2,3)$, $\P(N_1(1)=2,3) = O(n^{-1})$, and $[X_i]_{i=1}^n$ are i.i.d..

Invoking the same idea as used in the proof of Lemma~\ref{lemma:hayek6} then completes the proof.
\end{proof}

\subsection{Proof of Lemma~\ref{lemma:var1}}

\begin{proof}[Proof of Lemma~\ref{lemma:var1}]
For the first statement, notice that
\begin{align*}
  &\E \Big[ \Var \Big[ F_Y\big(Y_1 \wedge \tY_1\big) \Biggiven X_1 \Big] \Big] = \E \Big[ \E \Big[ F_Y^2\big(Y_1 \wedge \tY_1\big) \Biggiven X_1 \Big] \Big] - \E \Big[ \Big(\E \Big[ F_Y\big(Y_1 \wedge \tY_1\big) \Biggiven X_1 \Big] \Big)^2 \Big]\\
  =&\E \Big[ \E \Big[ F_Y^2\big(Y_1 \wedge \tY_1\big) \Biggiven X_1 \Big] \Big] - \E \Big[ \E \Big[ F_Y\big(Y_1 \wedge \tY_1\big) F_Y\big(\tY_1' \wedge \tY_1''\big) \Biggiven X_1 \Big] \Big],
\end{align*}
where $\tY_1',\tY_1''$ are independently drawn from $Y \given X_1$ and are further independent of $Y_1, \tY_1$ conditional on $X_1$.

For the first term above, letting $F_Y^{(n)}$ be the empirical distribution of $\{Y_i\}_{i=1}^n$, one then has
\begin{align*}
  &\frac{1}{n^3} \sum_{i=1}^n \Big(R_i \wedge R_{N_1(i)}\Big)^2 = \frac{1}{n} \sum_{i=1}^n \Big(F_Y^{(n)}\big(Y_i \wedge Y_{N_1(i)}\big) \Big)^2\\
  =& \Big[\frac{1}{n} \sum_{i=1}^n \Big(F_Y^{(n)}\big(Y_i \wedge Y_{N_1(i)}\big) \Big)^2 - \frac{1}{n} \sum_{i=1}^n \Big(F_Y\big(Y_i \wedge Y_{N_1(i)}\big) \Big)^2\Big] + \frac{1}{n} \sum_{i=1}^n F_Y^2\big(Y_i \wedge Y_{N_1(i)}\big).
\end{align*}

Using the Glivenko-Cantelli theorem (Theorem 19.1 in \cite{MR1652247}) and that fact that $F_Y,F_Y^{(n)}$ are bounded by 1, one has
\begin{align*}
  \Big\lvert\frac{1}{n} \sum_{i=1}^n \Big(F_Y^{(n)}\big(Y_i \wedge Y_{N_1(i)}\big) \Big)^2 - \frac{1}{n} \sum_{i=1}^n \Big(F_Y\big(Y_i \wedge Y_{N_1(i)}\big) \Big)^2\Big\rvert \le 2\lVert F_Y^{(n)} - F_Y \rVert_\infty \stackrel{\sf a.s.}{\longrightarrow} 0,
\end{align*}
with ``$\stackrel{\sf a.s.}{\longrightarrow}$'' representing strong convergence.

Then it suffices to consider the second term. We use bias-variance decomposition. Notice that
\begin{align*}
  &\E \Big[\frac{1}{n} \sum_{i=1}^n F_Y^2\big(Y_i \wedge Y_{N_1(i)}\big)\Big] = \E \Big[ \E \Big[F_Y^2\big(Y_1 \wedge Y_{N_1(1)}\big) \Biggiven \mX \Big]\Big]\\
  =& \E \Big[ \E \Big[ \int \ind\Big(Y_1 \wedge Y_{N_1(1)} \ge t_1\Big) \ind\Big(Y_1 \wedge Y_{N_1(1)} \ge t_2\Big) \d \mu_Y(t_1) \d \mu_Y(t_2)\Biggiven \mX \Big]\Big]\\
  =& \E \Big[ \E \Big[ \int \ind\Big(Y_1 \ge t_1 \vee t_2\Big) \ind\Big(Y_{N_1(1)} \ge t_1 \vee t_2\Big) \d \mu_Y(t_1) \d \mu_Y(t_2)\Biggiven \mX \Big]\Big]\\
  =& \E \Big[ \int G_{X_1}\big(t_1 \vee t_2\big) G_{X_{N_1(1)}}\big(t_1 \vee t_2\big) \d \mu_Y(t_1) \d \mu_Y(t_2)\Big].
\end{align*}

On the other hand, one can check that
\begin{align*}
  \E \Big[ \E \Big[ F_Y^2\big(Y_1 \wedge \tY_1\big) \Biggiven X_1 \Big] \Big] = \E \Big[ \int G_{X_1}^2\big(t_1 \vee t_2\big) \d \mu_Y(t_1) \d \mu_Y(t_2)\Big].
\end{align*}

Lemma 11.7 in \cite{azadkia2019simple} then implies that the bias is
\begin{align*}
  &\limsup_{n\to\infty} \Big\lvert \E \Big[\frac{1}{n} \sum_{i=1}^n F_Y^2\big(Y_i \wedge Y_{N_1(i)}\big)\Big] - \E \Big[ \E \Big[ F_Y^2\big(Y_1 \wedge \tY_1\big) \Biggiven X_1 \Big] \Big] \Big\rvert\\
  =& \limsup_{n\to\infty} \Big\lvert \E \Big[ \int G_{X_1}\big(t_1 \vee t_2\big) \Big(G_{X_{N_1(1)}}\big(t_1 \vee t_2\big) -  G_{X_1}\big(t_1 \vee t_2\big)\Big) \d \mu_Y(t_1) \d \mu_Y(t_2)\Big] \Big\rvert = 0.
\end{align*}

From the Efron-Stein inequality and the fact that $\lvert \{j:N_1(j)=i\} \rvert$ is always bounded for any $i \in \zahl{n}$, the variance is
\begin{align*}
  \Var \Big[\frac{1}{n} \sum_{i=1}^n F_Y^2\big(Y_i \wedge Y_{N_1(i)}\big)\Big] = O\Big(\frac{1}{n}\Big).
\end{align*}

Combining the bias and the variance yields
\begin{align*}
  \frac{1}{n^3} \sum_{i=1}^n \Big(R_i \wedge R_{N_1(i)}\Big)^2 - \E \Big[ \E \Big[ F_Y^2\big(Y_1 \wedge \tY_1\big) \Biggiven X_1 \Big] \Big] \stackrel{\sf p}{\longrightarrow} 0.
\end{align*}

In the same way and noticing that $i,N_1(i),N_2(i),N_3(i)$ are all different for any $i \in \zahl{n}$,
\begin{align*}
  &\frac{1}{n^3} \sum_{i=1}^n \Big(R_i \wedge R_{N_1(i)}\Big)  \Big(R_{N_2(i)} \wedge R_{N_3(i)}\Big) = \frac{1}{n} \sum_{i=1}^n F_Y^{(n)}\big(Y_i \wedge Y_{N_1(i)}\big)  F_Y^{(n)}\big(Y_{N_2(i)} \wedge Y_{N_3(i)}\big)\\
  =& \frac{1}{n} \sum_{i=1}^n F_Y\big(Y_i \wedge Y_{N_1(i)}\big)  F_Y\big(Y_{N_2(i)} \wedge Y_{N_3(i)}\big) + o_\P(1) \\
  =& \E \Big[ \E \Big[ F_Y\big(Y_1 \wedge \tY_1\big) F_Y\big(\tY_1' \wedge \tY_1''\big) \Biggiven X_1 \Big] \Big] + o_\P(1).
\end{align*}

We then complete the proof of the first statement, and the fourth statement holds in the same way. The second and the third statements can be established similarly by noticing that
\begin{align*}
  \Cov\Big[F_Y\big(Y_1 \wedge \tY_1\big), F_Y\big(\tY_1 \wedge \tY_1' \big) \Biggiven X_1 \Big] = \E \Big[ F_Y\big(Y_1 \wedge \tY_1\big) F_Y\big(\tY_1 \wedge \tY_1' \big) \Biggiven X_1 \Big] - \E \Big[ F_Y\big(Y_1 \wedge \tY_1\big) F_Y\big(\tY_1' \wedge \tY_1''\big) \Biggiven X_1 \Big].
\end{align*}

For the fifth statement,
\begin{align*}
  & \frac{1}{n^2(n-1)} \sum_{\substack{i,j=1\\i\neq j}}^n \ind\big(R_i \le R_j \wedge R_{N_1(j)}\big)R_i \wedge R_{N_1(i)} = \frac{1}{N_1(N-1)} \sum_{\substack{i,j=1\\i\neq j}}^n \ind\big(Y_i \le Y_j \wedge Y_{N_1(j)}\big) F_Y^{(n)}\big(Y_i \wedge Y_{N_1(i)}\big)\\
  =& \frac{1}{N_1(N-1)} \sum_{\substack{i,j=1\\i\neq j}}^n \ind\big(Y_i \le Y_j \wedge Y_{N_1(j)}\big) F_Y\big(Y_i \wedge Y_{N_1(i)}\big) + o_\P(1).
\end{align*}

Notice that $\P(N_1(1)=2)$ and $\P(N_1(1)=N_1(2))$ are both $O(n^{-1})$. Then the expectation is
\begin{align*}
  &\E\Big[\frac{1}{N_1(N-1)} \sum_{\substack{i,j=1\\i\neq j}}^n \ind\big(Y_i \le Y_j \wedge Y_{N_1(j)}\big) F_Y\big(Y_i \wedge Y_{N_1(i)}\big)\Big] = \E\Big[ \ind\big(Y_2 \le Y_1 \wedge Y_{N_1(1)}\big) F_Y\big(Y_2 \wedge Y_{N_1(2)}\big) \Big]\\
  =& \E\Big[ \int \ind\big(Y_2 \le Y_1 \wedge Y_{N_1(1)}\big)\ind\big(Y_2 \ge t\big) \ind\big(Y_{N_1(2)} \ge t\big) \d \mu_Y(t)\Big]\\
  =& \E\Big[ \int G_{X_1}(Y_2) G_{X_{N_1(1)}}(Y_2) \ind\big(Y_2 \ge t\big) \ind\big(Y_{N_1(2)} \ge t\big) \d \mu_Y(t)\Big] + O\Big(\frac{1}{n}\Big)\\
  =& \E\Big[ \int G_{X_1}^2(Y_2) \ind\big(Y_2 \ge t\big) \ind\big(Y_{N_1(2)} \ge t\big) \d \mu_Y(t)\Big] + o(1)\\ 
  =& \E\Big[ \int h(Y_2) \ind\big(Y_2 \ge t\big) \ind\big(Y_{N_1(2)} \ge t\big) \d \mu_Y(t)\Big] + o(1)\\
  =& \E\Big[ \int G_{X_2}^*(t) G_{X_{N_1(2)}}(t) \d \mu_Y(t)\Big] + o(1) = \E\Big[ \int G_{X_2}^*(t) G_{X_2}(t) \d \mu_Y(t)\Big] + o(1)\\
  =& \int \E\Big[ G_{X}^*(t) G_{X}(t) \Big] \d \mu_Y(t) + o(1),
\end{align*}
where $G_{x}^*(t) := \E[h(Y) \ind(Y \ge t) \given X=x]$ for $x \in \bR^d$.

On the other hand, we can check
\begin{align*}
  &\E \Big[\E\Big[\ind\big(Y_2 \le Y_1 \wedge \tY_1\big) F_Y\big(Y_2 \wedge \tY_2\big) \Biggiven X_1,X_2 \Big] \Big] = \int \E\Big[ G_{X}^*(t) G_{X}(t) \Big] \d \mu_Y(t).
\end{align*}

Then the fifth statement is established by using the same argument as before. The sixth statement can also be established in the same way.
\end{proof}

\subsection{Proof of Lemma~\ref{lemma:var2}}

\begin{proof}[Proof of Lemma~\ref{lemma:var2}]
The proof is similar to that of Lemma~\ref{lemma:var1}. The key is to notice that from the definitions of $h_0$ and $h_1$,
\begin{align*}
  &\Var \Big[  h_0(X_1) \Big] = \E \Big[  h_0^2(X_1) \Big] - \Big(\E \Big[  h_0(X_1) \Big]\Big)^2\\
  =& \E \Big[\E\Big[\ind\big(Y_3 \le Y_1 \wedge \tY_1\big) \ind\big(\tY_3 \le Y_2 \wedge \tY_2\big) \Biggiven X_1,X_2,X_3 \Big] \Big] - \Big(\E\Big[ \E\Big[F_Y\big(Y_1 \wedge \tY_1\big) \Biggiven X_1\Big] \Big] \Big)^2,\\
  & \Cov \Big[h_0(X_1), h_1(X_1)\Big] = \E \Big[h_0(X_1) h_1(X_1)\Big] - \E \Big[h_0(X_1)\Big] \E \Big[h_1(X_1)\Big]\\
  =& \E \Big[\E\Big[\ind\big(Y_2 \le Y_1 \wedge \tY_1\big) F_Y\big(\tY_2 \wedge \tY_2'\big) \Biggiven X_1,X_2\Big] \Big] - \Big(\E\Big[ \E\Big[F_Y\big(Y_1 \wedge \tY_1\big) \Biggiven X_1\Big] \Big] \Big)^2,\\
  &\Var \Big[  h_1(X_1) \Big] = \E \Big[  h_1^2(X_1) \Big] - \Big(\E \Big[  h_1(X_1) \Big]\Big)^2\\
  =& \E \Big[\E\Big[F_Y\big(Y_1 \wedge \tY_1\big) F_Y\big(\tY_1' \wedge \tY_1''\big) \Biggiven X_1\Big] \Big] - \Big(\E\Big[ \E\Big[F_Y\big(Y_1 \wedge \tY_1\big) \Biggiven X_1\Big] \Big] \Big)^2.
\end{align*}
All the rest is the same.
\end{proof}

\section{Proofs of the results in the supplement}\label{sec:proof-supp}

\nb{
\subsection{Proof of Theorem~\ref{thm:sobol}}

\begin{proof}[Proof of Theorem~\ref{thm:sobol}]
Let $\mX^\fu = [X^\fu_i]_{i=1}^n$. The joint central limit theorem is similar to the proof of Theorem~\ref{thm:clt} by using \citet[Theorem 3.4]{MR2435859} combining with the Cramér–Wold theorem. We only need to calculate $\Sigma$, i.e., the limits of $n \Var[\frac{1}{n} \sum_{i=1}^n Y_i Y_{N^\fu_1(i)} - (\frac{1}{n}\sum_{i=1}^n Y_i)^2]$, $n \Cov[\frac{1}{n} \sum_{i=1}^n Y_i Y_{N^\fu_1(i)} - (\frac{1}{n}\sum_{i=1}^n Y_i)^2, \frac{1}{n}\sum_{i=1}^n Y_i^2 - (\frac{1}{n}\sum_{i=1}^n Y_i)^2]$ and $n\Var[\frac{1}{n}\sum_{i=1}^n Y_i^2 - (\frac{1}{n}\sum_{i=1}^n Y_i)^2]$.

{\bf Part I.} We decompose $\Var[\frac{1}{n} \sum_{i=1}^n Y_i Y_{N^\fu_1(i)} - (\frac{1}{n}\sum_{i=1}^n Y_i)^2]$ as
\begin{align*}
  & n \Var\Big[\frac{1}{n} \sum_{i=1}^n Y_i Y_{N^\fu_1(i)} - \Big(\frac{1}{n}\sum_{i=1}^n Y_i\Big)^2 \Big]\\
  =& n \E\Big[\Var\Big[\frac{1}{n} \sum_{i=1}^n Y_i Y_{N^\fu_1(i)} \Biggiven \mX^\fu \Big]\Big] + n \Var\Big[\E\Big[\frac{1}{n} \sum_{i=1}^n Y_i Y_{N^\fu_1(i)} \Biggiven \mX^\fu \Big]\Big]\\
  & - 2n \E\Big[\Cov\Big[\frac{1}{n} \sum_{i=1}^n Y_i Y_{N^\fu_1(i)}, \Big(\frac{1}{n}\sum_{i=1}^n Y_i\Big)^2 \Biggiven \mX^\fu \Big]\Big] - 2n \Cov\Big[\E\Big[\frac{1}{n} \sum_{i=1}^n Y_i Y_{N^\fu_1(i)} \Biggiven \mX^\fu \Big], \E\Big[\Big(\frac{1}{n}\sum_{i=1}^n Y_i\Big)^2 \Biggiven \mX^\fu \Big]\Big]\\
  &+ n \Var\Big[\Big(\frac{1}{n}\sum_{i=1}^n Y_i\Big)^2\Big].
  \yestag\label{eq:sobol1}
\end{align*}

For the first term in \eqref{eq:sobol1},
\begin{align*}
  & n \Var\Big[\frac{1}{n} \sum_{i=1}^n Y_i Y_{N^\fu_1(i)} \Biggiven \mX^\fu \Big]\\
  =& \frac{1}{n}\sum_{i=1}^n \Var\Big[Y_i Y_{N^\fu_1(i)} \Biggiven \mX^\fu \Big] + \frac{1}{n} \sum_{\substack{j=N^\fu_1(i),i \neq N^\fu_1(j)\\ {\rm or}~i=N^\fu_1(j),j \neq N^\fu_1(i)}} \Cov\Big[Y_i Y_{N^\fu_1(i)}, Y_j Y_{N^\fu_1(j)} \Biggiven \mX^\fu \Big] \\
  & + \frac{1}{n} \sum_{\substack{i \neq j \\ N^\fu_1(i) = N^\fu_1(j)}} \Cov\Big[Y_i Y_{N^\fu_1(i)}, Y_j Y_{N^\fu_1(j)} \Biggiven \mX^\fu \Big] + \frac{1}{n} \sum_{j = N^\fu_1(i), i = N^\fu_1(j)} \Cov\Big[Y_i Y_{N^\fu_1(i)}, Y_j Y_{N^\fu_1(j)} \Biggiven \mX^\fu \Big] \\
  & + \frac{1}{n} \sum_{i,j,N^\fu_1(i),N^\fu_1(j)~{\rm distinct}} \Cov\Big[Y_i Y_{N^\fu_1(i)}, Y_j Y_{N^\fu_1(j)} \Biggiven \mX^\fu \Big]\\
  :=& T^\fu_1 + T^\fu_2 + T^\fu_3+ T^\fu_4 + T^\fu_5 .
\end{align*}

Note that when $i,j,N^\fu_1(i),N^\fu_1(j)~{\rm distinct}$, we have $\Cov[Y_i Y_{N^\fu_1(i)}, Y_j Y_{N^\fu_1(j)} \given \mX^\fu ] = 0$. Then
\begin{align*}
  T^\fu_5 = \frac{1}{n} \sum_{i,j,N^\fu_1(i),N^\fu_1(j)~{\rm distinct}} \Cov\Big[Y_i Y_{N^\fu_1(i)}, Y_j Y_{N^\fu_1(j)} \Biggiven \mX^\fu \Big] = 0.
\end{align*}

For the first four terms, similar to Lemma~\ref{lemma:hayek1}, we have
\begin{align*}
  & \E\Big[T^\fu_1\Big] - \E \Big[\Var \Big[ Y_1\tY_1 \Biggiven X^\fu_1 \Big] \Big] \longrightarrow 0,\\
  & \E\Big[T^\fu_2\Big] - 2\E \Big[\Cov\Big[Y_1 \tY_1, \tY_1 \tY_1' \Biggiven X^\fu_1 \Big]\ind \Big( 1 \neq N^\fu_1(N^\fu_1(1)) \Big) \Big] \longrightarrow 0,\\
  & \E\Big[T^\fu_3\Big] - \E \Big[\Cov\Big[Y_1 \tY_1, \tY_1 \tY_1' \Biggiven X^\fu_1 \Big] \Big\lvert \Big\{j: j \neq 1, N^\fu_1(j) = N^\fu_1(1)\Big\} \Big\rvert \Big] \longrightarrow 0,\\
  & \E\Big[T^\fu_4\Big] - \E \Big[ \Var \Big[ Y_1\tY_1 \Biggiven X^\fu_1 \Big] \ind \Big( 1 = N^\fu_1(N^\fu_1(1)) \Big) \Big] \longrightarrow 0,
\end{align*}
where $\tY_1, \tY_1'$ are sampled independently from the conditional distribution of $Y_1$ given $X^\fu_1$.

Then we have
\begin{align*}
  &n \E\Big[\Var\Big[\frac{1}{n} \sum_{i=1}^n Y_i Y_{N^\fu_1(i)} \Biggiven \mX^\fu \Big]\Big] - \E \Big[ \Var \Big[ Y_1\tY_1 \Biggiven X^\fu_1 \Big] \Big(1 + \ind \Big( 1 = N^\fu_1(N^\fu_1(1)) \Big)\Big) \Big] \\
  &- \E\Big[\Cov\Big[Y_1 \tY_1, \tY_1 \tY_1' \Biggiven X^\fu_1 \Big] \Big(2\ind \Big( 1 \neq N^\fu_1(N^\fu_1(1)) \Big) + \Big\lvert \Big\{j: j \neq 1, N^\fu_1(j) = N^\fu_1(1)\Big\} \Big\rvert \Big)\Big] \longrightarrow 0.
  \yestag\label{eq:sobol11}
\end{align*}

As in Lemma~\ref{lemma:var1}, the corresponding estimators are
\begin{align*}
  &\frac{1}{n} \sum_{i=1}^n \Big[ \Big(Y_i Y_{N^\fu_1(i)}\Big)  \Big(Y_i  Y_{N^\fu_1(i)} - Y_{N^\fu_2(i)} Y_{N^\fu_3(i)}\Big)\Big] - \E \Big[\Var \Big[ Y_1\tY_1 \Biggiven X^\fu_1 \Big] \Big] \stackrel{\sf p}{\longrightarrow} 0,\\
  &\frac{1}{n} \sum_{i=1}^n \Big[\Big(Y_i Y_{N^\fu_1(i)}\Big)\Big(Y_i Y_{N^\fu_2(i)} - Y_{N^\fu_2(i)} Y_{N^\fu_3(i)}\Big) \ind \Big( i \neq N^\fu_1(N^\fu_1(i)) \Big) \Big] \\
  &- \E \Big[\Cov\Big[Y_1 \tY_1, \tY_1 \tY_1' \Biggiven X^\fu_1 \Big]\ind \Big( 1 \neq N^\fu_1(N^\fu_1(1)) \Big) \Big] \stackrel{\sf p}{\longrightarrow} 0,\\
  &\frac{1}{n} \sum_{i=1}^n \Big[\Big(Y_i Y_{N^\fu_1(i)}\Big)\Big(Y_i Y_{N^\fu_2(i)} - Y_{N^\fu_2(i)} Y_{N^\fu_3(i)}\Big)\Big\lvert \Big\{j: j \neq i, N^\fu_1(j) = N^\fu_1(i)\Big\} \Big\rvert \Big] \\
  &- \E \Big[\Cov\Big[Y_1 \tY_1, \tY_1 \tY_1' \Biggiven X^\fu_1 \Big] \Big\lvert \Big\{j: j \neq 1, N^\fu_1(j) = N^\fu_1(1)\Big\} \Big\rvert \Big] \stackrel{\sf p}{\longrightarrow} 0,\\
  &\frac{1}{n} \sum_{i=1}^n \Big[ \Big(Y_i Y_{N^\fu_1(i)}\Big)  \Big(Y_i Y_{N^\fu_1(i)} - Y_{N^\fu_2(i)} Y_{N^\fu_3(i)}\Big) \ind \Big( i = N^\fu_1(N^\fu_1(i)) \Big) \Big] \\
  &- \E \Big[ \Var \Big[ Y_1\tY_1 \Biggiven X^\fu_1 \Big] \ind \Big( 1 = N^\fu_1(N^\fu_1(1)) \Big) \Big] \stackrel{\sf p}{\longrightarrow} 0.
\end{align*}

Then the estimator for the first term in \eqref{eq:sobol1} is
\begin{align*}
  &\frac{1}{n} \sum_{i=1}^n Y_i^2 Y_{N^\fu_1(i)}^2 \Big(1 + \ind \Big( i = N^\fu_1(N^\fu_1(i)) \Big) \Big)\\
  &+ \frac{1}{n} \sum_{i=1}^n Y_i^2 Y_{N^\fu_1(i)} Y_{N^\fu_2(i)} \Big(2\ind \Big( i \neq N^\fu_1(N^\fu_1(i)) \Big) + \Big\lvert \Big\{j: j \neq 1, N^\fu_1(j) = N^\fu_1(i)\Big\} \Big\rvert \Big)\\
  &- \frac{1}{n} \sum_{i=1}^n Y_i Y_{N^\fu_1(i)} Y_{N^\fu_2(i)} Y_{N^\fu_3(i)} \Big(2 + \ind \Big( i \neq N^\fu_1(N^\fu_1(i)) \Big) + \Big\lvert \Big\{j: j \neq 1, N^\fu_1(j) = N^\fu_1(i)\Big\} \Big\rvert \Big)\\
  -& n \E\Big[\Var\Big[\frac{1}{n} \sum_{i=1}^n Y_i Y_{N^\fu_1(i)} \Biggiven \mX^\fu \Big]\Big] \stackrel{\sf p}{\longrightarrow} 0.
  \yestag\label{eq:sobol16}
\end{align*}

For the second term in \eqref{eq:sobol1}, similar to Lemma~\ref{lemma:variance,cond} by using the Efron-Stein inequality, we have
\begin{align*}
  & n \Var\Big[\E\Big[\frac{1}{n} \sum_{i=1}^n Y_i Y_{N^\fu_1(i)} \Biggiven \mX^\fu \Big]\Big] - \Var\Big[\E\Big[ Y_1 \tY_1 \Biggiven X^\fu_1 \Big]\Big] \longrightarrow 0.
  \yestag\label{eq:sobol12}
\end{align*}

The estimator for the second term in \eqref{eq:sobol1} is 
\begin{align*}
  \frac{1}{n} \sum_{i=1}^n Y_i Y_{N^\fu_1(i)}  Y_{N^\fu_2(i)} Y_{N^\fu_3(i)} - \Big(\frac{1}{n} \sum_{i=1}^n Y_i Y_{N^\fu_1(i)}\Big)^2 - \Var\Big[\E\Big[ Y_1 \tY_1 \Biggiven X^\fu_1 \Big]\Big] \stackrel{\sf p}{\longrightarrow} 0
  \yestag\label{eq:sobol17}
\end{align*}

For the third term in \eqref{eq:sobol1},
\begin{align*}
  & n \Cov\Big[\frac{1}{n} \sum_{i=1}^n Y_i Y_{N^\fu_1(i)}, \Big(\frac{1}{n}\sum_{i=1}^n Y_i\Big)^2 \Biggiven \mX^\fu \Big]\\
  =& \frac{1}{n^2} \Cov\Big[\sum_{i=1}^n Y_i Y_{N^\fu_1(i)}, \sum_{i=1}^n Y_i^2 \Biggiven \mX^\fu \Big] + \frac{1}{n^2} \Cov\Big[\sum_{i=1}^n Y_i Y_{N^\fu_1(i)}, \sum_{i\neq j} Y_i Y_j \Biggiven \mX^\fu \Big]\\
  =& \frac{1}{n^2} \sum_{i=1}^n \Cov\Big[ Y_i Y_{N^\fu_1(i)}, Y_i^2 + Y_{N^\fu_1(i)}^2 \Biggiven \mX^\fu \Big] \\
  &+ \frac{2}{n^2} \sum_{i=1}^n \Cov\Big[ Y_i Y_{N^\fu_1(i)}, Y_i \sum_{j \neq i}Y_j + Y_{N^\fu_1(i)} \sum_{j \neq N^\fu_1(i)}Y_j - Y_i Y_{N^\fu_1(i)} \Biggiven \mX^\fu \Big].
\end{align*}

Then we have
\begin{align*}
  & n \Cov\Big[\frac{1}{n} \sum_{i=1}^n Y_i Y_{N^\fu_1(i)}, \Big(\frac{1}{n}\sum_{i=1}^n Y_i\Big)^2 \Biggiven \mX^\fu \Big] - 4 \E \Big[\Cov \Big[ Y_1\tY_1, Y_1 Y_2 \Biggiven X^\fu_1 \Big] \Big] \longrightarrow 0.
  \yestag\label{eq:sobol13}
\end{align*}

The estimator for the third term in \eqref{eq:sobol1} is 
\begin{align*}
  &4 \Big[\Big(\frac{1}{n} \sum_{i=1}^n Y_i^2 Y_{N^\fu_1(i)}\Big) \Big(\frac{1}{n} \sum_{i=1}^n Y_i \Big) - \Big(\frac{1}{n} \sum_{i=1}^n Y_i Y_{N^\fu_1(i)} Y_{N^\fu_2(i)}\Big) \Big(\frac{1}{n} \sum_{i=1}^n Y_i \Big)\Big] \\
  &- n \E\Big[\Cov\Big[\frac{1}{n} \sum_{i=1}^n Y_i Y_{N^\fu_1(i)}, \Big(\frac{1}{n}\sum_{i=1}^n Y_i\Big)^2 \Biggiven \mX^\fu \Big]\Big] \stackrel{\sf p}{\longrightarrow} 0.
  \yestag\label{eq:sobol18}
\end{align*}

For the fourth term in \eqref{eq:sobol1}, again similar to Lemma~\ref{lemma:variance,cond}, we have
\begin{align*}
  & n \Cov\Big[\E\Big[\frac{1}{n} \sum_{i=1}^n Y_i Y_{N^\fu_1(i)} \Biggiven \mX^\fu \Big], \E\Big[\Big(\frac{1}{n}\sum_{i=1}^n Y_i\Big)^2 \Biggiven \mX^\fu \Big]\Big]\\
  =& n \Cov\Big[\frac{1}{n} \sum_{i=1}^n \E\Big[ Y_i \tY_i \Biggiven X^\fu_i \Big], \E\Big[\Big(\frac{1}{n}\sum_{i=1}^n Y_i\Big)^2 \Biggiven \mX^\fu \Big]\Big] + o(1)\\
  =& n \Cov\Big[\E\Big[ Y_1 \tY_1 \Biggiven X^\fu_1 \Big], \E\Big[\Big(\frac{1}{n}\sum_{i=1}^n Y_i\Big)^2 \Biggiven \mX^\fu \Big]\Big] + o(1)\\
  =& \frac{1}{n} \Cov\Big[\E\Big[ Y_1 \tY_1 \Biggiven X^\fu_1 \Big], \E\Big[Y_1^2 \Biggiven X^\fu_1 \Big] \Big] + 2(1-\frac{1}{n}) \Cov\Big[\E\Big[ Y_1 \tY_1 \Biggiven X^\fu_1 \Big], \E\Big[Y_1Y_2 \Biggiven X^\fu_1, X^\fu_2 \Big] \Big] + o(1).
\end{align*}

Then we have
\begin{align*}
  n \Cov\Big[\E\Big[\frac{1}{n} \sum_{i=1}^n Y_i Y_{N^\fu_1(i)} \Biggiven \mX^\fu \Big], \E\Big[\Big(\frac{1}{n}\sum_{i=1}^n Y_i\Big)^2 \Biggiven \mX^\fu \Big]\Big] - 2 \Cov\Big[\E\Big[ Y_1 \tY_1 \Biggiven X^\fu_1 \Big], \E\Big[Y_1Y_2 \Biggiven X^\fu_1, X^\fu_2 \Big] \Big]  \longrightarrow 0.
  \yestag\label{eq:sobol14}
\end{align*}

The estimator for the fourth term in \eqref{eq:sobol1} is
\begin{align*}
    &2\Big[\Big(\frac{1}{n} \sum_{i=1}^n Y_i Y_{N^\fu_1(i)} Y_{N^\fu_2(i)}\Big) \Big(\frac{1}{n} \sum_{i=1}^n Y_i\Big) - \Big(\frac{1}{n} \sum_{i=1}^n Y_i Y_{N^\fu_1(i)}\Big) \Big(\frac{1}{n} \sum_{i=1}^n Y_i\Big)^2\Big] \\
    &- n \Cov\Big[\E\Big[\frac{1}{n} \sum_{i=1}^n Y_i Y_{N^\fu_1(i)} \Biggiven \mX^\fu \Big], \E\Big[\Big(\frac{1}{n}\sum_{i=1}^n Y_i\Big)^2 \Biggiven \mX^\fu \Big]\Big] \stackrel{\sf p}{\longrightarrow} 0.
    \yestag\label{eq:sobol19}
\end{align*}

For the fifth term in \eqref{eq:sobol1}, by the Delta method, we have
\begin{align*}
  n \Var\Big[\Big(\frac{1}{n}\sum_{i=1}^n Y_i\Big)^2\Big] - 4 \Var[Y_1] \{\E[Y_1]\}^2 \longrightarrow 0.
  \yestag\label{eq:sobol15}
\end{align*}

Then the estimator for the fifth term in \eqref{eq:sobol1} is
\begin{align*}
  4\Big[\frac{1}{n} \sum_{i=1}^n Y_i^2  - \Big(\frac{1}{n} \sum_{i=1}^n Y_i\Big)^2\Big]\Big(\frac{1}{n} \sum_{i=1}^n Y_i\Big)^2 - n \Var\Big[\Big(\frac{1}{n}\sum_{i=1}^n Y_i\Big)^2\Big] \stackrel{\sf p}{\longrightarrow} 0.
  \yestag\label{eq:sobol110}
\end{align*}

Combining \eqref{eq:sobol11}, \eqref{eq:sobol12}, \eqref{eq:sobol13}, \eqref{eq:sobol14}, \eqref{eq:sobol15} using \eqref{eq:sobol1} yields the limit of $n \Var[\frac{1}{n} \sum_{i=1}^n Y_i Y_{N^\fu_1(i)} - (\frac{1}{n}\sum_{i=1}^n Y_i)^2]$. Combining \eqref{eq:sobol16}, \eqref{eq:sobol17}, \eqref{eq:sobol18}, \eqref{eq:sobol19}, \eqref{eq:sobol110} using \eqref{eq:sobol1} provides the consistent estimator.

{\bf Part II.} We decompose $\Cov[\frac{1}{n} \sum_{i=1}^n Y_i Y_{N^\fu_1(i)} - (\frac{1}{n}\sum_{i=1}^n Y_i)^2, \frac{1}{n}\sum_{i=1}^n Y_i^2 - (\frac{1}{n}\sum_{i=1}^n Y_i)^2]$ as 
\begin{align*}
  & n \Cov\Big[\frac{1}{n} \sum_{i=1}^n Y_i Y_{N^\fu_1(i)} - (\frac{1}{n}\sum_{i=1}^n Y_i)^2, \frac{1}{n}\sum_{i=1}^n Y_i^2 - (\frac{1}{n}\sum_{i=1}^n Y_i)^2\Big]\\
  =& n \E\Big[\Cov\Big[\frac{1}{n} \sum_{i=1}^n Y_i Y_{N^\fu_1(i)}, \frac{1}{n}\sum_{i=1}^n Y_i^2 \Biggiven \mX^\fu \Big]\Big] + n \Cov\Big[\E\Big[\frac{1}{n} \sum_{i=1}^n Y_i Y_{N^\fu_1(i)} \Biggiven \mX^\fu \Big], \E\Big[\frac{1}{n}\sum_{i=1}^n Y_i^2 \Biggiven \mX^\fu \Big]\Big]\\
  & - n \E\Big[\Cov\Big[\frac{1}{n} \sum_{i=1}^n Y_i Y_{N^\fu_1(i)}, \Big(\frac{1}{n}\sum_{i=1}^n Y_i\Big)^2 \Biggiven \mX^\fu \Big]\Big] - n \Cov\Big[\E\Big[\frac{1}{n} \sum_{i=1}^n Y_i Y_{N^\fu_1(i)} \Biggiven \mX^\fu \Big], \E\Big[\Big(\frac{1}{n}\sum_{i=1}^n Y_i\Big)^2 \Biggiven \mX^\fu \Big]\Big]\\
  &- \Cov\Big[\Big(\frac{1}{n}\sum_{i=1}^n Y_i\Big)^2, \frac{1}{n}\sum_{i=1}^n Y_i^2\Big] + n \Var\Big[\Big(\frac{1}{n}\sum_{i=1}^n Y_i\Big)^2\Big].
  \yestag\label{eq:sobol2}
\end{align*}

For the first term in \eqref{eq:sobol2}, we have
\begin{align*}
  n \Cov\Big[\frac{1}{n} \sum_{i=1}^n Y_i Y_{N^\fu_1(i)}, \frac{1}{n}\sum_{i=1}^n Y_i^2 \Biggiven \mX^\fu \Big] = \frac{1}{n} \sum_{i=1}^n \Cov[Y_i Y_{N^\fu_1(i)}, Y_i^2 + Y_{N^\fu_1(i)}^2 \given \mX^\fu],
\end{align*}
and then
\begin{align*}
  n \E\Big[\Cov\Big[\frac{1}{n} \sum_{i=1}^n Y_i Y_{N^\fu_1(i)}, \frac{1}{n}\sum_{i=1}^n Y_i^2 \Biggiven \mX^\fu \Big]\Big] - 2 \E \Big[\Cov \Big[ Y_1\tY_1, Y_1^2 \Biggiven X^\fu_1 \Big] \Big] \longrightarrow 0.
  \yestag\label{eq:sobol21}
\end{align*}

The estimator for the first term in \eqref{eq:sobol2} is 
\begin{align*}
  &2 \Big[\frac{1}{n} \sum_{i=1}^n Y_i^3 Y_{N^\fu_1(i)} - \frac{1}{n} \sum_{i=1}^n Y_i^2 Y_{N^\fu_1(i)} Y_{N^\fu_2(i)}\Big] - n \E\Big[\Cov\Big[\frac{1}{n} \sum_{i=1}^n Y_i Y_{N^\fu_1(i)}, \frac{1}{n}\sum_{i=1}^n Y_i^2 \Biggiven \mX^\fu \Big]\Big] \stackrel{\sf p}{\longrightarrow} 0.
  \yestag\label{eq:sobol24}
\end{align*}

For the second term in \eqref{eq:sobol2}, we have
\begin{align*}
  n \Cov\Big[\E\Big[\frac{1}{n} \sum_{i=1}^n Y_i Y_{N^\fu_1(i)} \Biggiven \mX^\fu \Big], \E\Big[\frac{1}{n}\sum_{i=1}^n Y_i^2 \Biggiven \mX^\fu \Big]\Big] - \Cov\Big[\E\Big[ Y_1 \tY_1 \Biggiven X^\fu_1 \Big], \E\Big[Y_1^2 \Biggiven X^\fu_1 \Big] \Big] \longrightarrow 0.
  \yestag\label{eq:sobol22}
\end{align*}

The estimator for the second term in \eqref{eq:sobol2} is 
\begin{align*}
  \Big[\frac{1}{n} \sum_{i=1}^n Y_i^2 Y_{N^\fu_1(i)} Y_{N^\fu_2(i)} &- \Big(\frac{1}{n} \sum_{i=1}^n Y_i Y_{N^\fu_1(i)}\Big) \Big(\frac{1}{n} \sum_{i=1}^n Y_i^2\Big)\Big] -\\ &n \Cov\Big[\E\Big[\frac{1}{n} \sum_{i=1}^n Y_i Y_{N^\fu_1(i)} \Biggiven \mX^\fu \Big], \E\Big[\frac{1}{n}\sum_{i=1}^n Y_i^2 \Biggiven \mX^\fu \Big]\Big] \stackrel{\sf p}{\longrightarrow} 0.
  \yestag\label{eq:sobol25}
\end{align*}

The third term and the fourth term in \eqref{eq:sobol2} are the same as third term and the fourth term in \eqref{eq:sobol1}.

For the fifth term in \eqref{eq:sobol2}, we have
\begin{align*}
  n \Cov\Big[\Big(\frac{1}{n}\sum_{i=1}^n Y_i\Big)^2, \frac{1}{n}\sum_{i=1}^n Y_i^2\Big] - 2 \Cov\Big[Y_1^2, Y_1 Y_2\Big] \longrightarrow 0.
  \yestag\label{eq:sobol23}
\end{align*}

The estimator for the fifth term in \eqref{eq:sobol2} is 
\begin{align*}
  &2 \Big[\Big(\frac{1}{n} \sum_{i=1}^n Y_i^3\Big) \Big(\frac{1}{n} \sum_{i=1}^n Y_i\Big) - \Big(\frac{1}{n} \sum_{i=1}^n Y_i^2\Big) \Big(\frac{1}{n} \sum_{i=1}^n Y_i\Big)^2\Big] - n \Cov\Big[(\frac{1}{n}\sum_{i=1}^n Y_i)^2, \frac{1}{n}\sum_{i=1}^n Y_i^2\Big] \stackrel{\sf p}{\longrightarrow} 0.
  \yestag\label{eq:sobol26}
\end{align*}

The sixth term in \eqref{eq:sobol2} is the same as the fifth term in \eqref{eq:sobol1}.

Combining \eqref{eq:sobol21}, \eqref{eq:sobol22}, \eqref{eq:sobol13}, \eqref{eq:sobol14}, \eqref{eq:sobol23}, \eqref{eq:sobol15} using \eqref{eq:sobol2} yields the limit of 
\[
n \Cov[\frac{1}{n} \sum_{i=1}^n Y_i Y_{N^\fu_1(i)} - (\frac{1}{n}\sum_{i=1}^n Y_i)^2, \frac{1}{n}\sum_{i=1}^n Y_i^2 - (\frac{1}{n}\sum_{i=1}^n Y_i)^2]. 
\]
Combining \eqref{eq:sobol24}, \eqref{eq:sobol25}, \eqref{eq:sobol18}, \eqref{eq:sobol19}, \eqref{eq:sobol26}, \eqref{eq:sobol110} using \eqref{eq:sobol2} provides the consistent estimator.

{\bf Part III.} We decompose $\Var[\frac{1}{n}\sum_{i=1}^n Y_i^2 - (\frac{1}{n}\sum_{i=1}^n Y_i)^2]$ as 
\begin{align*}
  n \Var\Big[\frac{1}{n}\sum_{i=1}^n Y_i^2 - \Big(\frac{1}{n}\sum_{i=1}^n Y_i\Big)^2\Big] = n \Var\Big[\frac{1}{n}\sum_{i=1}^n Y_i^2 \Big] - 2n \Cov\Big[\frac{1}{n}\sum_{i=1}^n Y_i^2, \Big(\frac{1}{n}\sum_{i=1}^n Y_i\Big)^2\Big] \\
  + n \Var\Big[\Big(\frac{1}{n}\sum_{i=1}^n Y_i\Big)^2\Big].
  \yestag\label{eq:sobol3}
\end{align*}

For the first term in \eqref{eq:sobol3}, we have
\begin{align*}
  n \Var\Big[\frac{1}{n}\sum_{i=1}^n Y_i^2 \Big] - \Var[Y_1^2] = 0.
  \yestag\label{eq:sobol31}
\end{align*}

The estimator for the first term in \eqref{eq:sobol3} is
\begin{align*}
  \Big[\frac{1}{n} \sum_{i=1}^n Y_i^4  - \Big(\frac{1}{n} \sum_{i=1}^n Y_i^2\Big)^2\Big] - n \Var\Big[\frac{1}{n}\sum_{i=1}^n Y_i^2 \Big] \stackrel{\sf p}{\longrightarrow} 0.
  \yestag\label{eq:sobol32}
\end{align*}

The second term in \eqref{eq:sobol3} is the same as the fifth term in \eqref{eq:sobol2}. The third term in \eqref{eq:sobol3} is the same as the fifth term in \eqref{eq:sobol1}. 

Combining \eqref{eq:sobol31}, \eqref{eq:sobol23}, \eqref{eq:sobol15} using \eqref{eq:sobol3} yields the limit of $n \Var[\frac{1}{n}\sum_{i=1}^n Y_i^2 - (\frac{1}{n}\sum_{i=1}^n Y_i)^2]$. Combining \eqref{eq:sobol32}, \eqref{eq:sobol26}, \eqref{eq:sobol110} using \eqref{eq:sobol3} provides the consistent estimator.
\end{proof}

\subsection{Proof of Theorem~\ref{lemma:sobol}}

\begin{proof}[Proof of Theorem~\ref{lemma:sobol}]
By the assumptions, the properties of the nearest neighbor distance on a compact support in one dimension, and the dominated convergence theorem, we have
\begin{align*}
  \lvert B^\fu \rvert = & \lvert \E[Y_1 Y_{N^\fu_1(1)}] - \E\{(\E[Y\given X^\fu])^2\} \rvert = \lvert \E[\E[Y_1 \given X_1^\fu] \E[Y_{N^\fu_1(1)} \given \mX^\fu]] - \E\{(\E[Y\given X^\fu])^2\} \rvert \\
  =& \lvert \E[\E[Y_1 \given X_1^\fu] (\E[Y_{N^\fu_1(1)} \given \mX^\fu] - \E[Y_1 \given X_1^\fu])] \rvert\\
  \le& \lVert \E[Y \given X^\fu = x] \rVert_\infty \lVert \frac{\d}{\d x} \E[Y \given X^\fu = x] \rVert_\infty\E[\lvert X^\fu_{N^\fu_1(1)} - X^\fu_1 \rvert] = O(n^{-1}).
\end{align*}
This completes the proof.
\end{proof}

\subsection{Proof of Theorem~\ref{thm:sobol0}}

\begin{proof}[Proof of Theorem~\ref{thm:sobol0}]

The proof is direct by applying the Delta method on the bivariate function $f(x,y) = x/y$. $\hat\sigma^2$ is consistent since both $\xi_n^\fu$ and $\hat\Sigma$ are consistent when estimating $S^\fu$ and $\Sigma$, respectively.
\end{proof}

}

\subsection{Proof of Lemma~\ref{lemma:variance,cond}}

\begin{proof}[Proof of Lemma~\ref{lemma:variance,cond}]

For any $x_1,x_2 \in \bR^d$, define $\Phi(x_1,x_2) := \E[F_Y(Y_1 \wedge Y_2) \given X_1=x_1, X_2 = x_2]$. Then by the definition of $\xi_n^*$ in \eqref{eq:xin*},
\begin{align*}
   \E [\xi_n^* \given \mX] =& \frac{6n}{n^2-1} \E \Big[\sum_{i=1}^n \min\big\{F_Y(Y_i), F_Y(Y_{N_1(i)})\big\} + \sum_{i=1}^n h(Y_i) \Biggiven \mX \Big] \\
  = & \frac{6n}{n^2-1} \Big(\sum_{i=1}^n \Phi(X_i,X_{N_1(i)}) + \sum_{i=1}^n h_0(X_i) \Big).
\end{align*}

To apply the Efron-Stein inequality, we implement the same notation as used in the Step II in the proof of Theorem~\ref{thm:hayek}. It is then true that
\begin{align*}
  & n \Var \Big[ \frac{6n}{n^2-1} \sum_{i=1}^n \Big(h_1(X_i) + h_0(X_i)\Big) - \E[\xi_n^* \given \mX] \Big]\\
  = & n \Var \Big[ \frac{6n}{n^2-1} \sum_{i=1}^n \Big(\Phi(X_i,X_{N_1(i)}) - h_1(X_i) \Big) \Big]\\
  = & \frac{36n^3}{(n^2-1)^2} \Var \Big[ \sum_{i=1}^n \Big(\Phi(X_i,X_{N_1(i)}) - h_1(X_i) \Big) \Big]\\
  \le & \frac{18n^3}{(n^2-1)^2} \sum_{\ell=1}^n \E\Big[\Phi(X_\ell,X_{N_1(\ell)}) - h_1(X_\ell) - \Phi(\tX_\ell,X_{\tN_1(\ell)}) + h_1(\tX_\ell) \\
  & + \sum_{\substack{i=1\\N_1(i)=\ell~{\rm or}~\tN_1(i)=\ell}}^n \Big(\Phi(X_i,X_{N_1(i)}) - \Phi(X_i,X'_{\tN_1(i)}) \Big) \Big]^2\\
  = & \frac{18n^4}{(n^2-1)^2} \E\Big[\Phi(X_\ell,X_{N_1(\ell)}) - h_1(X_\ell) - \Phi(\tX_\ell,X_{\tN_1(\ell)}) + h_1(\tX_\ell) \\
  & + \sum_{\substack{i=1\\N_1(i)=\ell~{\rm or}~\tN_1(i)=\ell}}^n \Big(\Phi(X_i,X_{N_1(i)}) - \Phi(X_i,X'_{\tN_1(i)}) \Big) \Big]^2,
\end{align*}
where $X'_{\tN_1(i)} = X_{\tN_1(i)}$ if $\tN_1(i) \neq \ell$ and $X'_{\tN_1(i)} = \tX_{\tN_1(i)}$ if $\tN_1(i) = \ell$.

From Lemma 11.3 in \cite{azadkia2019simple}, $X_{N_1(1)} \to X_1$ almost surely. Then similar to the proof of Lemma 11.7 in \cite{azadkia2019simple}, one can establish $\Phi(X_\ell,X_{N_1(\ell)}) - \Phi(X_\ell,X_\ell)$ converges to zero in probability. Noticing that $\Phi(X_\ell,X_\ell) = h_1(X_\ell)$ from the definition of $h_1$, one deduces
\[
  \lim_{n \to \infty} \E\Big[\Phi(X_\ell,X_{N_1(\ell)}) - h_1(X_\ell)\Big]^2 = 0,~~ \lim_{n \to \infty} \E\Big[\Phi(\tX_\ell,X_{\tN_1(\ell)}) - h_1(\tX_\ell)\Big]^2 = 0.
\]

Similar to the proof of Lemma~\ref{lemma:hayek7}, we then have
\[
  \lim_{n \to \infty} \E\Big[ \sum_{\substack{i=1\\N_1(i)=\ell~{\rm or}~\tN_1(i)=\ell}}^n \Big(\Phi(X_i,X_{N_1(i)}) - \Phi(X_i,X'_{\tN_1(i)}) \Big) \Big]^2 = 0.
\]

Leveraging the Cauchy–Schwarz inequality then completes the proof.
\end{proof}

{%\small
\bibliographystyle{apalike}
\bibliography{AMS}
}

\end{document}